
\documentclass[leqno,final]{siamltex}


\usepackage[final,margin]{fixme}
\setlength{\marginparwidth}{54pt}

\usepackage{amsmath}
\usepackage{amsfonts}
\usepackage{amssymb}
\usepackage{bm}
\allowdisplaybreaks

\usepackage{graphicx}
\usepackage{caption}
\usepackage{subcaption}
\usepackage{afterpage}
\usepackage{multirow}
\usepackage{algorithm}
\usepackage{algpseudocode}
\floatname{algorithm}{Procedure}

\usepackage{hyperref}
\usepackage[format=hang, font={footnotesize,sf}, labelfont={bf}, margin=1cm, aboveskip=5pt,position=bottom]{caption}
\usepackage{multicol}
\newtheorem{remark}{Remark}

\pagestyle{headings}
\usepackage{fancyhdr}

\usepackage[T1]{fontenc}
\usepackage[ansinew]{inputenc}
\usepackage{lmodern}

\usepackage[letterpaper,centering]{geometry}
\geometry{textheight=8.5in,textwidth=6in}

\usepackage{todonotes}

\fxusetheme{color}
\FXRegisterAuthor{rv}{reviewer}{\color{green!60!black}RV}
\FXRegisterAuthor{ym}{aym}{YMM}
\FXRegisterAuthor{db}{adb}{\color{red}DB}

\usepackage{url}


\DeclareMathOperator*{\Span}{span}

\makeatletter
\newlength{\continueindent}
\setlength{\continueindent}{\algorithmicindent} 

\renewenvironment{algorithmic}[1][0]%
   {%
   \edef\ALG@numberfreq{#1}%
   \def\@currentlabel{\theALG@line}%
   \setcounter{ALG@line}{0}%
   \setcounter{ALG@rem}{0}%
   \let\\\algbreak%
   \expandafter\edef\csname ALG@currentblock@\theALG@nested\endcsname{0}%
   \expandafter\let\csname ALG@currentlifetime@\theALG@nested\endcsname\relax%
   \begin{list}%
      {\ALG@step}%
      {%
      \rightmargin\z@%
      \itemsep\z@ \itemindent\z@ \listparindent2em%
      \partopsep\z@ \parskip\z@ \parsep\z@%
      \labelsep 0.5em \topsep 0.2em
      \ifthenelse{\equal{#1}{0}}%
         {\labelwidth 0.5em}%
         {\labelwidth 1.2em}%
       \leftmargin\labelwidth \addtolength{\leftmargin}{\labelsep}
      \ALG@tlm\z@%
      }%
      \parshape 2 \leftmargin \linewidth \continueindent \dimexpr\linewidth-\continueindent\relax
   \setcounter{ALG@nested}{0}%
   \ALG@beginalgorithmic%
   }%
   {
   \ALG@closeloops%
   \expandafter\ifnum\csname ALG@currentblock@\theALG@nested\endcsname=0\relax%
   \else%
      \PackageError{algorithmicx}{Some blocks are not closed!!!}{}%
   \fi%
   \ALG@endalgorithmic%
   \end{list}%
   }%
\makeatother

\title{Spectral tensor-train decomposition}


\author{Daniele Bigoni\footnotemark[1] \footnotemark[2]
  \and Allan P.\ Engsig-Karup\footnotemark[1]
  \and Youssef M.\ Marzouk\footnotemark[2]}

\begin{document}


\maketitle

\renewcommand{\thefootnote}{\fnsymbol{footnote}}
\footnotetext[1]{Technical University of Denmark, Kgs.\ Lyngby, DK-2800 Denmark. \texttt{\{dabi,apek\}@dtu.dk}}
\footnotetext[2]{Massachusetts Institute of Technology, Cambridge, MA 02139 USA. \texttt{\{dabi,ymarz\}@mit.edu}}
\renewcommand{\thefootnote}{\arabic{footnote}}

\begin{abstract} The accurate approximation of high-dimensional functions is an essential task in uncertainty quantification and many other fields. We propose a new function approximation scheme based on a spectral extension of the tensor-train (TT) decomposition. We first define a functional version of the TT decomposition and analyze its properties. We obtain results on the convergence of the decomposition, revealing links between the regularity of the function, the dimension of the input space, and the TT ranks. We also show that the regularity of the target function is preserved by the univariate functions (i.e., the ``cores'') comprising the functional TT decomposition. This result motivates an approximation scheme employing polynomial approximations of the cores. For functions with appropriate regularity, the resulting \textit{spectral tensor-train decomposition} combines the favorable dimension-scaling of the TT decomposition with the spectral convergence rate of polynomial approximations, yielding efficient and accurate surrogates for high-dimensional functions.
To construct these decompositions, we use the sampling algorithm \texttt{TT-DMRG-cross} to obtain the TT decomposition of tensors resulting from suitable discretizations of the target function.
We assess the performance of the method on a range of numerical examples: a modifed set of Genz functions with dimension up to $100$, and functions with mixed Fourier modes or with local features. We observe significant improvements in performance over an anisotropic adaptive Smolyak approach. The method is also used to approximate the solution of an elliptic PDE with random input data. The open source software and examples presented in this work are available online.\footnote{\url{http://pypi.python.org/pypi/TensorToolbox/}}

\end{abstract}

\begin{keywords} Approximation theory, tensor-train decomposition, orthogonal polynomials, uncertainty quantification.
\end{keywords}

\begin{AMS} 41A10, 41A63, 41A65, 46M05, 65D15
\end{AMS}

\pagestyle{myheadings}
\thispagestyle{plain}
\markboth{\MakeUppercase{D.\ Bigoni,  A.\ Engsig-Karup, Y.\ Marzouk}}{\MakeUppercase{Spectral tensor-train decomposition}}






\section{Introduction}

High-dimensional functions appear frequently in science and engineering applications, where a quantity of interest may depend in nontrivial ways on a large number of independent variables. In the field of uncertainty quantification (UQ), for example, stochastic partial differential equations (PDEs) are often characterized by hundreds or thousands of independent stochastic parameters. A numerical approximation of the PDE solution must capture the coupled effects of all these parameters on the entire solution field, or on any quantity of interest that is a functional of the solution field. Problems of this kind quickly become intractable when confronted with na\"{i}ve approximation methods, and the development of more effective methods is a long-standing challenge. This paper develops a new approach for high-dimensional function approximation, combining the discrete tensor-train format \cite{Oseledets2011} with spectral theory for polynomial approximation.

For simplicity, we will focus on real-valued functions representing the parameter dependence of a single quantity of interest. For a function $f \in L^2([a,b]^d)$, a straightforward approximation might involve projecting $f$ onto the space spanned by the tensor product of basis functions $\{\phi_{i_j}(x_j) \}_{i_j=1}^{n_j} \subset L^2([a,b])$ for $j=1 \ldots d$, obtaining:
\begin{equation}\label{eq:FullTensorApproach}
  f \simeq \sum_{i_1}^{n_1} \cdots \sum_{i_d}^{n_d} c_{i_1,\ldots,i_d} \left( \phi_{i_1} \otimes \cdots \otimes \phi_{i_d} \right) .
\end{equation}
This approach quickly becomes impractical as the parameter dimension $d$ increases, due to the exponential growth in the number of coefficients $c_{i_1, \ldots, i_d}$ and the computational effort (i.e., the number of function evaluations) required to determine their values. This growth is a symptom of the ``curse of dimensionality.''

Attempts to mitigate the curse of dimensionality typically employ some assumption about the structure of the function under consideration, effectively reducing the number of coefficients that must be computed. A widely successful class of methods involves interpolation or pseudospectral approximation with sparse grids \cite{Barthelmann2000,Xiu2005,Nobile2008,Constantine2012,Conrad2012}:
instead of taking a full tensor product approximation as in \eqref{eq:FullTensorApproach}, one considers a Smolyak sum \cite{Smolyak1963} of \textit{smaller} full-tensor approximations, each perhaps involving only a subset of the input parameters or at most low-order interactions among all the inputs. While the basis functions $\phi_i$ are typically selected \textit{a priori}, the components of the Smolyak sum can be chosen adaptively \cite{Conrad2012}. In general, these approaches work best when inputs to the target function $f$ are weakly coupled.

Other approaches to high-dimensional approximation rely on \textit{low-rank separated representations}, e.g., of the form:
\begin{equation}\label{eq:separated}
  f \simeq \sum_{i=1}^r c_i \gamma_{i,1} \otimes \cdots \otimes \gamma_{i,d} ,
\end{equation}
where the functions $\gamma_{i,1},\ldots,\gamma_{i,d}:[a,b] \rightarrow \mathbb{R}$, for $i=1 \ldots r$, are not specified \textit{a priori} and $r$ is ideally small (hence, the descriptor `low-rank'). In some cases, the chosen representation might separate only certain blocks of inputs to $f$, e.g., spatial and stochastic variables {\cite{Nouy2010,Tamellini2014,Chinesta2014}}. In general, however, inputs to $f$ can all be separated as in \eqref{eq:separated}. The representation in \eqref{eq:separated} is analogous to the \textit{canonical decomposition} of a tensor \cite{Kolda2009}. Many strategies for constructing low-rank separated representations of parameterized models have been developed \cite{Doostan2013,Litvinenko2013,Khoromskij2010,Khoromskij2011a,Nouy2009,Nouy2010,Tamellini2014,Chinesta2014,Matthies2012,Giraldi2013,Espig2014,Dolgov2014,Zhang2014}; these include the proper generalized decomposition \cite{Nouy2010,Tamellini2014,Chinesta2014}, least-squares approaches \cite{Doostan2009}, and tensor-structured Galerkin approximations \cite{Khoromskij2011a,Matthies2012,Espig2014}. Almost all of these approaches are ``intrusive'' in the sense that they require access to more than black-box evaluations of the target function $f$. But non-intrusive approaches have recently been developed as well \cite{Doostan2013}.

An alternative to the canonical tensor decomposition is the \textit{tensor-train} (TT) format for discrete tensors, introduced by \cite{Oseledets2011}. As we will describe in Section~\ref{sec:tensor-decomposition}, the TT format offers a number of advantages over the canonical decomposition, and it is therefore attractive to consider its application to function approximation. Recent work employing TT in the context of uncertainty quantification includes \cite{Litvinenko2013}, which uses the TT format to compress the operator and the polynomial coefficients arising in the stochastic Galerkin discretization of an elliptic PDE. In \cite{Khoromskij2010} the \textit{quantics tensor-train} (QTT) format is used to accelerate the preconditioned iterative solution of multiparametric elliptic PDEs. \cite{Zhang2014} uses TT-cross interpolation \cite{Oseledets2010} to evaluate the three-term recurrence relation used to find orthogonal polynomials and Gaussian quadrature points for arbitrary probability measures. \cite{Espig2014} compares the TT format with the canonical decomposition and the hierarchical Tucker decomposition, for the purpose of storing the operator derived from the Galerkin discretization of a stochastic PDE, and for computing the associated inner products.
While these efforts use the TT format to achieve important efficiency gains in solving particular UQ problems, they do not address the general non-intrusive function approximation problem considered in this paper. 


In this work, we will use classical polynomial approximation theory to extend the \textit{discrete} TT decomposition into a scheme for the approximation of continuous functions. To do this, we will first construct the \textit{functional} counterpart of the tensor-train decomposition and examine its convergence. We will prove that the functional TT decomposition converges for a wide class of functions in $L^2$ that satisfy a particular regularity condition; this result highlights connections between the regularity of the target function, the dimension of the input space, and the TT ranks. For this class of functions, we will also show that the weak differentiability of the target function is preserved by the univariate functions or ``cores'' comprising the functional TT decomposition, allowing us to apply polynomial approximation theory to the latter. The resulting combined \textit{spectral TT} approximation exploits the regularity of the target function $f$ and converges exponentially for smooth functions, but yields a representation whose complexity can scale linearly with dimension.


Other work in recent years has examined the connection between multivariate function decompositions and their discrete counterparts, represented by factorizations of matrices and tensors. A broad presentation of the functional analysis of Banach and Hilbert tensor spaces is presented in \cite{Hackbusch2012}. Some of these results are exploited in the construction of the functional tensor-train decomposition. Another building block for many aspects of our work is \cite{TOWNSEND2013}, which studies decompositions of bivariate functions and their connections to classical matrix factorizations. Moving from the bivariate to the general multivariate case, examples of tensor-format decompositions for particular functions are given in \cite{Tyrtyshnikov2003,Oseledets2012}. For functions in periodic Sobolev spaces, \cite{Schneider2014} develops results for the approximation rates of hierarchical tensor formats. Our work will provide related results for the tensor-train decomposition of functions on hypercubes equipped with a finite measure.

Moreover, we will focus on the non-intrusive setting where $f$ is a black-box function that can only be evaluated at chosen parameter values. Hence we must resort to a sampling method in constructing the spectral TT approximation: we will use the rank-revealing \texttt{TT-DMRG-cross} technique \cite{Savostyanov2011} to approximate the tensors resulting from suitable discretizations of $f$. We will then assess the performance of the spectral TT approximation on a range of target functions, including the Genz test functions and modifications thereof, functions with Fourier spectra chosen to illustrate particular challenges, functions with local features, and functions induced by the solution of a stochastic elliptic PDE. In all these examples, we will comment on the relationships between the degree of the polynomial approximation, the TT ranks, the accuracy of the overall approximation, and the scaling of computational effort with dimension.

The remainder of the paper is organized as follows. In Section \ref{sec:tensor-decomposition}, we recall the definitions and properties of several tensor decomposition formats, focusing on the TT decomposition. Section \ref{sec:approximation-theory} reviews relevant results on the approximation of functions in Sobolev spaces. In Section \ref{sec:spectr-tens-train}, we provide a constructive definition of the functional TT decomposition, discuss its convergence, and present results on the regularity of the decomposition. This leads to algorithms for constructing the spectral TT decomposition, whose practical implementations are summarized in Section \ref{sec:algorithm}. Section \ref{sec:numerical-examples} presents the numerical examples. Some technical results are deferred to the Appendix.


\section{Tensor decompositions}\label{sec:tensor-decomposition}

For the moment, assume that we can afford to evaluate the function $f:[a,b]^d \rightarrow \mathbb{R}$ at all points on a tensor grid $\bm{\mathcal{X}}=\times_{j=1}^d {\bf x}_j $, where ${\bf x}_j = (x_{j}^i)_{i=1}^{n_j}$ for $j=1,\ldots,d$ and $x_j^i \in [a,b] \subset \mathbb{R}$.  We denote $\bm{\mathcal{A}}(i_1,\ldots,i_d) = f(x_{i_1},\ldots,x_{i_d})$ and abbreviate the $d$-dimensional tensor by $\bm{\mathcal{A}} = f(\bm{\mathcal{X}})$.

In the special case of $d=2$, $\bm{\mathcal{A}}$ reduces to a matrix $\mathbf{A}$. The singular value decomposition (SVD) of this matrix,
\begin{equation}
  \mathbf{A} = {\bf U} {\boldsymbol \Sigma} {\bf V}^T,
\end{equation}
always exists and, since $\mathbf{A}$ is a real-valued matrix, is unique up to sign changes \cite{Trefethen1997}. The SVD can be used to obtain a low-rank approximation of $\mathbf{A}$ by truncating away the smallest singular values on the diagonal of $\boldsymbol \Sigma$ and the corresponding columns of ${\bf U}$ and ${\bf V}$. Unfortunately the SVD cannot be immediately generalized to tensors of dimension $d > 2$. Several approaches to this problem have been proposed over the years \cite{Kolda2009,Bro1998,Grasedyck2013}. Perhaps the most popular are the canonical decomposition (CANDECOMP) \cite{Carroll1970,Harshman1970}, the Tucker decomposition \cite{Tuck1963a}, and the tensor-train decomposition \cite{Oseledets2011}.

\subsection{Classical tensor decompositions}\label{sec:class-tens-decomp}
The {\it canonical decomposition} aims to represent $\bm{\mathcal{A}}$ as a sum of outer products:
\begin{equation}
  \bm{\mathcal{A}} \simeq \bm{\mathcal{A}}_{\text{CD}} = \sum_{i=1}^r \mathbf{A}_{i}^{(1)} \otimes \cdots \otimes \mathbf{A}_{i}^{(d)} \; ,
\end{equation}
where $\mathbf{A}_{i}^{(k)}$ is the $i$-th column of matrix $\mathbf{A}^{(k)} \in \mathbb{R}^{n_k \times r}$. The upper bound of summation $r$ is called the canonical rank of the tensor $\bm{\mathcal{A}}_{\text{CD}}$. The canonical decomposition is unique under mild conditions \cite{Sidiropoulos2000}. On the other hand a best rank-$r$ decomposition---where one truncates the expansion similarly to the SVD---does not always exist since the space of rank-$r$ tensors is not closed \cite{Kruskal1989,Silva2008}. Computation of the canonical decomposition based on the alternating least squares (ALS) method is not guaranteed to find a global minimum of the approximation error, and has a number of other drawbacks and corresponding workarounds \cite{Kolda2009}. 

The {\it Tucker decomposition} is defined as follows:
\begin{equation}
  \bm{\mathcal{A}} \simeq \sum_{i_1=1}^{r_1}\cdots \sum_{i_d=1}^{r_d} g_{i_1 \ldots  i_d} \left( \mathbf{A}_{i_1}^{(1)} \otimes \cdots \otimes \mathbf{A}_{i_d}^{(d)} \right) ,
\end{equation}
where the {\it core tensor} $\bm{\mathcal{G}}$, defined by $\bm{\mathcal{G}}(i_1, \ldots, i_d) = g_{i_1 \ldots  i_d}$, weighs interactions between different components in different dimensions. This expansion is not unique, due to the possibility of applying a rotation to the core tensor and its inverse to the components $\mathbf{A}^{(i)}$. However, the ability to recover a unique decomposition can be improved if some sparsity is imposed on the core tensor \cite{Martin2008}. 
The Tucker decomposition does not suffer from the same closure problem as the canonical decomposition,
but the number of parameters to be determined grows exponentially with the dimension $d$ due to the presence of the core tensor $\bm{\mathcal{G}}$. This cost limits the applicability of Tucker decomposition to relatively low-dimensional problems.

\subsection{Discrete tensor-train (DTT) decomposition}\label{subsec:TT-decomposition}
The dimension limitations of the Tucker decomposition can be overcome using a hierarchical singular value decomposition, where the tensor is not decomposed with a single core $\bm{\mathcal{G}} $ that simultaneously relates all the dimensions, but rather with a hierarchical tree of cores---usually binary---that relate a few dimensions at a time. This approach is called the hierarchical Tucker or $\mathcal{H}$-Tucker decomposition \cite{Hackbusch2009,Grasedyck2010}. 
A particular type of $\mathcal{H}$-Tucker decomposition is the tensor-train decomposition,
which retains many of the characteristics of the $\mathcal{H}$-Tucker decomposition but with a simplified formulation. (See \cite[Sec.~5.3]{Grasedyck2010} for a comparison.) The tensor-train decomposition has the following attractive properties:
\begin{itemize}
  \item existence of the full-rank approximation \cite[Thm.~2.1]{Oseledets2011},
  \item existence of the low-rank best approximation \cite[Cor.~2.4]{Oseledets2011},
  \item an algorithm that returns a quasi-optimal TT-approximation (see \eqref{eq:TT-SVD-error} and \cite[Cor.~2.4]{Oseledets2011}),
  \item memory complexity that scales linearly with dimension $d$ \cite[Sec.~3]{Oseledets2011},
  \item straightforward multi-linear algebra operations, and
  \item a sampling algorithm for constructing the TT-approximation, with a computational complexity that scales linearly with the dimension $d$ \cite{Savostyanov2011}.
\end{itemize}
\vspace{\baselineskip}

\begin{definition}[{Discrete tensor-train approximation}]\label{def:tens-train-approx} Let $\bm{\mathcal{A}} \in \mathbb{R}^{n_1\times\cdots\times n_d}$ have entries $\bm{\mathcal{A}}(i_1,\ldots,i_d)$. The TT-rank--${\bf r}=(r_0,\ldots,r_d)$ approximation of $\bm{\mathcal{A}}$ is $\bm{\mathcal{A}}_{TT} \in \mathbb{R}^{n_1\times\cdots\times n_d}$, defined as:
\begin{equation}\label{eq:Discr-TT-approximation}
  \begin{aligned}
    \bm{\mathcal{A}}(i_1,\ldots,i_d) &= \bm{\mathcal{A}}_{TT}(i_1,\ldots,i_d) + \bm{\mathcal{E}}_{TT}(i_1,\ldots,i_d) \\
    &= \sum_{\alpha_0,\ldots,\alpha_d=1}^{\bf r} G_1(\alpha_0,i_1,\alpha_1)\cdots G_d(\alpha_{d-1},i_d,\alpha_d) + \bm{\mathcal{E}}_{TT}(i_1,\ldots,i_d) \;,
  \end{aligned}
\end{equation}
where $\bm{\mathcal{E}}_{TT}$ is the residual term and $r_0=r_d=1$.
\end{definition}

\smallskip

The three-dimensional arrays $G_k(\alpha_k, i_k, \alpha_{k+1})$ are referred to as TT cores. The TT format approximates every entry of the tensor $\bm{\mathcal{A}}$ with a product of matrices, in particular with a sequence of  $r_k \times r_{k+1}$ matrices, each indexed by the parameter $i_{k+1}$. In other words, each core $G_k$ is ``connected'' to the adjacent cores $G_{k-1}$ and $G_{k+1}$ by summing over the indices $\alpha_{k-1}$ and $\alpha_k$; hence the name tensor `train.' It can be shown \cite{Oseledets2011} that there exists an exact TT representation ($\bm{\mathcal{E}}_{TT} = {\bf 0}$) for which
\begin{equation}\label{eq:TT-rank-unfolding}
  r_k =\rank\left(\mathbf{A}_k\right),  \qquad \forall k \in \{1,\ldots,d\},
\end{equation}
where $\mathbf{A}_k$ is the $k$-th unfolding of $\bm{\mathcal{A}}$, corresponding to the MATLAB/NumPy operation:
\begin{equation}\label{eq:tensor-unfolding}
  \mathbf{A}_k = {\rm reshape}\left( \bm{\mathcal{A}}, \prod_{s=1}^k n_s, \prod_{s=k+1}^d n_s \right).
\end{equation}
Furthermore, if $r_k \leq \rank(\mathbf{A}_k)$, a TT-rank--${\bf r}$ best approximation to $\bm{\mathcal{A}}$ in Frobenius norm, called $\bm{\mathcal{A}}^{\rm best}$, always exists, and the algorithm \texttt{TT-SVD} \cite{Oseledets2011} produces a quasi-optimal approximation to it. In particular, if $\bm{\mathcal{A}}_{TT}$ is the numerical approximation of $\bm{\mathcal{A}}$ obtained with \texttt{TT-SVD}, then
\begin{equation}\label{eq:TT-SVD-error}
  \Vert \bm{\mathcal{A}} - \bm{\mathcal{A}}_{TT} \Vert_F \leq \sqrt{d-1}\Vert \bm{\mathcal{A}} - \bm{\mathcal{A}}^{\rm best}\Vert_F \;.
\end{equation}
If the truncation tolerance for the SVD of each unfolding is set to $\delta = \varepsilon / \sqrt{d-1} \Vert \bm{\mathcal{A}} \Vert_F$, the \texttt{TT-SVD} is able to construct the approximation $\bm{\mathcal{A}}_{TT}$ such that
\begin{equation}
  \label{eq:TT-SVD-error-2}
  \Vert \bm{\mathcal{A}} - \bm{\mathcal{A}}_{TT} \Vert_F \leq \varepsilon \Vert \bm{\mathcal{A}} \Vert_F \;.
\end{equation}
Assuming that the TT-ranks are all equal, $r_k = r$, and that $n_k=n$, the TT-decomposition $\bm{\mathcal{A}}_{TT}$ requires the storage of $\mathcal{O}\left( dnr^2 \right) $ parameters. Thus the memory complexity of the representation \eqref{eq:Discr-TT-approximation} scales linearly with dimension. A further reduction in the required storage can be achieved using the \textit{quantics}-TT format \cite{Oseledets2010a,Khoromskij2011} which, for $n=2^m$, leads to $\mathcal{O}\left( dmr^2 \right)$ complexity.

The computational complexity of the \texttt{TT-SVD} depends on the selected accuracy, but for $r_k =r$ and $n_k=n$, the algorithm requires $\mathcal{O}\left( rn^d \right)$ flops. 
We see that this complexity grows exponentially with dimension and thus the \textit{curse of dimensionality} is not resolved, except for the memory complexity of the final compressed representation. 
At this stage, it is worth noting that using the tensor-train format rather than the more complex $\mathcal{H}$-Tucker decomposition relinquishes the possibility of implementing a parallel version of \texttt{TT-SVD} \cite{Grasedyck2010} and gaining a factor of $1/\log_2(d)$ in computational complexity. But this would still not resolve the exponential growth of computational complexity with respect to dimension. Another reason that the \texttt{TT-SVD} may not be immediately suitable for high-dimensional problems is that it first requires storage of the full tensor. This means that the initial memory requirements scale exponentially with the problem's dimension. In the next section we will discuss an alternative method for constructing a TT approximation of the tensor using a small number of function evaluations.

An open question in tensor-train decomposition regards the ordering of the $d$ indices of $\bm{\mathcal{A}}$; different orderings can lead to higher or lower TT-ranks, and change the memory efficiency of the representation accordingly. Given a particular permutation $\sigma$, we define the re-ordered tensor $\bm{\mathcal{B}}({\bf i}) = \bm{\mathcal{A}}(\sigma({\bf i}))$. One would like to find $\sigma$ such that the TT-ranks of $\bm{\mathcal{B}}$ are minimized. From \eqref{eq:TT-rank-unfolding} we see that the TT-ranks depend on the ranks of the unfoldings ${\bf B}_k$ of $\bm{\mathcal{B}}$, and from the definition of the unfolding \eqref{eq:tensor-unfolding} one sees that two indices $i$ and $j$ will influence the ranks of the matrices $\{ {\bf B}_k \}_{k=i}^{j-1}$. The permutation $\sigma$ should be chosen such that pairs of indices yielding high-rank unfoldings are contiguous, so that the rank will be high only for a limited number of unfoldings. If this does not happen, the non-separability of pairs of dimensions is carried from core to core, making the decomposition more expensive. Section \ref{sec:tens-train-proj-Fourier} will point to several examples where this problem arises.

\subsection{Cross-interpolation of tensors}\label{sec:tt-dmrg}
An alternative to \texttt{TT-SVD} is provided by the \texttt{TT-DMRG-cross} algorithm. (See \cite{Savostyanov2011} for a detailed description.) This method hinges on the notion of the density matrix renormalization group \cite{White1993} (DMRG) and on matrix skeleton decomposition \cite{Goreinov1997}. For $d=2$ and $\mathbf{A} \in \mathbb{R}^{m\times n}$, the skeleton decomposition is defined by:
\begin{equation}
  \mathbf{A} \simeq \mathbf{A}(:,\mathcal{J})\mathbf{A}(\mathcal{I},\mathcal{J})^{-1}\mathbf{A}(\mathcal{I},:) \;,
\end{equation}
where $\mathcal{I}=(i_1,\ldots,i_r)$ and $\mathcal{J}=(j_1,\ldots,j_r)$ are subsets of the index sets $[1,\ldots,m]$ and $[1,\ldots,n]$. The selection of the indices $(\mathcal{I},\mathcal{J})$ need to be such that most of the information contained in $\mathbf{A}$ is captured by the decomposition. It turns out that the optimal submatrix $\mathbf{A}(\mathcal{I},\mathcal{J})$ is that with maximal determinant in modulus among all the $r\times r$ submatrices of $\mathbf{A}$ \cite{Goreinov2010}. The problem of finding such a matrix is NP-hard \cite{Civril2009}. An approximation to the solution of this problem can be found using the {\tt maxvol} algorithm \cite{Goreinov2010}, in a row-column alternating fashion as explained in \cite{Oseledets2010}. Running \texttt{maxvol} is computationally inexpensive and requires $2c(n-r)r$ operations, where $c$ is usually a small constant in many practical applications.

The problem of finding the TT-decomposition $\bm{\mathcal{A}}_{TT}$ can be cast as the minimization problem
\begin{equation}\label{eq:TT-dmrg-minimization}
  \min_{G_1,\ldots,G_d} \Vert \bm{\mathcal{A}} - \bm{\mathcal{A}}_{TT} \Vert_F .
\end{equation}
One possible approach for solving this problem is \texttt{TT-cross} \cite{Oseledets2010}. Here the optimization is performed through \textit{left-to-right} and \textit{right-to-left} sweeps of the cores, using the matrix skeleton decomposition to find the most relevant fibers in the $d$ dimensional space. A \textit{fiber} is, for a $d$-dimensional tensor $\bm{\mathcal{A}}$, the equivalent of what rows and columns are for a matrix. In MATLAB notation, the $(i_1,\ldots,i_{k-1},i_{k+1},\ldots,i_d)$ fiber along the $k$-th dimension is $\bm{\mathcal{A}}(i_1,\ldots,i_{k-1},:,i_{k+1},\ldots,i_d)$. This approach provides linear scaling in the number of entries evaluated. On the other hand, it requires the TT-ranks to be known \textit{a priori} in order to select the correct number of fibers for each dimension. Underestimating these ranks leads to a poor (and in some cases erroneous) approximation, while overestimating them increases computational effort.

A more effective approach is the \texttt{TT-DMRG-cross} \cite{Savostyanov2011}, where the optimization is performed over two cores, $G_k$ and $G_{k+1}$, at a time. At step $k$ of the sweeps, the core $W_k(i_k,i_{k+1})=G_k(i_k)G_k(i_{k+1}) $ solving \eqref{eq:TT-dmrg-minimization} is found, and the cores $G_k$ and $G_{k+1}$ are recovered 
through the SVD of $W_k$. The relevant core $W_k$ is identified again using the maximum volume principle, by selecting the most important planes $\bm{\mathcal{A}}(i_1,\ldots,i_{k-1}, :, :, i_{k+2},\ldots,i_d)$ in the $d$-dimensional space. Unlike \texttt{TT-cross}, this method is \textit{rank-revealing}, meaning that the TT-ranks do not need to be guessed \textit{a priori}; instead, the method determines them automatically.

\section{Relevant results from approximation theory}\label{sec:approximation-theory}

The main objective of this work is to extend the TT format to functional approximations of $f$. To do this we need to consider the case where some smoothness can be assumed on $f$. Here we will review some concepts from polynomial approximation theory which, in subsequent sections, will be combined with the tensor-train decomposition.
In the following, we will make use of the Sobolev spaces:
\begin{equation}\label{eq:SobolevSpace}
  \mathcal{H}^k_\mu({\mathbf{I}}) = \left\lbrace f \in L^2_\mu({\mathbf{I}}) : \sum_{\vert {\mathbf{i}} \vert \leq k} \Vert D^{({\mathbf{i}})}f \Vert_{L^2_\mu({\mathbf{I}})} < +\infty \right\rbrace \;,
\end{equation}
where $k \geq 0$, $D^{({\mathbf{i}})}f$ is the ${\mathbf{i}}$-th weak derivative of $f$, $\mathbf{I} = I_1 \times \cdots \times I_d$ is a product of intervals of $\mathbb{R}$ and $\mu:\mathcal{B}({\mathbf{I}}) \rightarrow \mathbb{R}$ is a $\sigma$-finite measure on the Borel $\sigma$-algebra defined on ${\mathbf{I}}$. This space is equipped with the norm $\Vert \cdot \Vert^2_{\mathcal{H}^k_\mu({\mathbf{I}})}$ defined as
\begin{equation}
  \label{eq:SobolevNorm}
  \Vert f \Vert^2_{\mathcal{H}^k_\mu({\mathbf{I}})} = \sum_{\vert {\mathbf{i}} \vert \leq k} \Vert D^{({\mathbf{i}})}f \Vert^2_{L^2_\mu({\mathbf{I}})}
\end{equation}
and the semi-norm $\vert \cdot \vert_{{\mathbf{I}},\mu,k}$ given by
\begin{equation}\label{eq:SobolevSemiNorm}
  \vert f \vert^2_{{\mathbf{I}},\mu,k} = \sum_{\vert {\mathbf{i}} \vert = k} \Vert D^{({\mathbf{i}})}f \Vert^2_{L^2_\mu({\mathbf{I}})}.
\end{equation}
In the following we will assume that $\mu$ is a product measure satisfying $\mu({\bf I})=\prod_{i=1}^d \mu_i(I_i)$, 
where $\mu_i$ is a $\sigma$-finite measure on the Borel $\sigma$-algebra defined on $I_i$.

\subsection{Projection}\label{sec:projection}
A function $f \in L^2_{\mu}({\mathbf{I}})$ can be approximated by its projection onto a finite-dimensional subspace of $L^2_{\mu}({\mathbf{I}})$. The following results hold both for compact and non-compact supports ${\mathbf{I}}$.

\smallskip
%
%
\begin{definition}[Spectral expansion]\label{def:SpectralExpansion} Let $\mathbf{I} \subseteq \mathbb{R}^d$ and $f\in L_\mu^2({\mathbf{I}})$.
Let $\left\lbrace\Phi_{\mathbf{i}}\right\rbrace_{\vert \mathbf{i} \vert=0}^{\mathbf{\infty}}$ be a set of multivariate polynomials forming an orthonormal basis for $L_\mu^2({\mathbf{I}})$, where $\Phi_{\mathbf{i}}(\mathbf{x}) = \phi_{i_1,1}(x_1)\cdots \phi_{i_d,d}(x_d)$,  $\mathbf{i}=(i_1,\ldots,i_d)$, and $\phi_{i,j}$ is the degree-$i$ member of the family of univariate polynomials orthonormal with respect to the measure $\mu_j$. For $\mathbf{N}=(N_1,\ldots,N_d) \in \mathbb{N}_0^d$, the degree-$\mathbf{N}$ spectral expansion of $f$ is obtained from the projection operator $P_{\mathbf{N}}:L_\mu^2({\mathbf{I}})\rightarrow \Span(\left\lbrace\Phi_{\mathbf{i}}\right\rbrace_{\mathbf{i}=0}^{\mathbf{N}})$, where
\begin{equation}\label{eq:SpectralExpansion}
  P_{\mathbf{N}} f = \sum_{0 \leq \mathbf{i} \leq \mathbf{N}} c_{\mathbf{i}}\Phi_{\mathbf{i}} , \qquad c_{\mathbf{i}}=\int_{\mathbf{I}} f \, \Phi_{\mathbf{i}} d \mu(\mathbf{x}).
\end{equation}
and $\mathbf{i} \leq \mathbf{N}$ denotes $\bigwedge_{1 \leq j \leq d} \left({i}_j \leq {N}_j\right) $. The operator $P_{\mathbf{N}}$ is orthogonal in the inner product on $L^2_\mu({\mathbf{I}})$.
\end{definition}

\medskip

For simplicity, in the following we define $P_N := P_{\mathbf{N}}$ when $N_1 = \ldots = N_d = N$. The rate of convergence of the spectral expansion \eqref{eq:SpectralExpansion} is determined by the smoothness of $f$.
\smallskip
\begin{proposition}[Convergence of spectral expansion \cite{Hesthaven2008,Canuto2006}]\label{prop:SpectralConvergence} Let $f\in \mathcal{H}^k_\mu({\mathbf{I}})$ for $k\geq0$. Then
\begin{equation}
  \Vert f - P_Nf \Vert_{L_\mu^2({\mathbf{I}})} \leq C(k)N^{-k} \vert f \vert_{{\mathbf{I}},\mu,k} .
\end{equation}
\end{proposition}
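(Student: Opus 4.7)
The plan is to reduce the multivariate bound to a univariate bound by exploiting the product structure of the measure $\mu$ and the tensor-product basis $\{\Phi_{\mathbf{i}}\}$. Let $P_N^{(j)}:L^2_{\mu_j}(I_j)\to\Span(\{\phi_{i,j}\}_{i=0}^N)$ denote the univariate orthogonal projection acting in the $j$-th coordinate. Because $\mu=\prod_j \mu_j$ and $\Phi_{\mathbf{i}}=\prod_j \phi_{i_j,j}$, Fubini plus orthonormality shows that these univariate projections commute and that
\begin{equation*}
  P_N \;=\; P_N^{(1)} P_N^{(2)} \cdots P_N^{(d)} .
\end{equation*}
This factorization is what turns a $d$-dimensional question into $d$ one-dimensional questions.

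Next I would use the standard telescoping identity
\begin{equation*}
  I - P_N^{(1)} \cdots P_N^{(d)} \;=\; \sum_{j=1}^{d} P_N^{(1)} \cdots P_N^{(j-1)} \bigl(I - P_N^{(j)}\bigr),
\end{equation*}
apply the triangle inequality in $L^2_\mu(\mathbf{I})$, and note that each $P_N^{(j)}$ is an $L^2$-orthogonal projection, hence a contraction on $L^2_\mu(\mathbf{I})$ (this follows from Fubini since $P_N^{(j)}$ acts variable-wise). Consequently
\begin{equation*}
  \Vert f - P_N f \Vert_{L^2_\mu(\mathbf{I})} \;\leq\; \sum_{j=1}^{d} \bigl\Vert \bigl(I - P_N^{(j)}\bigr) f \bigr\Vert_{L^2_\mu(\mathbf{I})}.
\end{equation*}

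For each term I would invoke the classical univariate spectral estimate for orthogonal polynomial expansions (this is precisely the content of the cited references): for $g\in \mathcal{H}^k_{\mu_j}(I_j)$, $\Vert g - P_N^{(j)} g\Vert_{L^2_{\mu_j}(I_j)} \leq C(k) N^{-k} \Vert D^k g\Vert_{L^2_{\mu_j}(I_j)}$. Applying this with $g = f(\cdot\,,\mathbf{x}_{-j})$ and integrating out the remaining variables against $\prod_{i\neq j}\mu_i$ yields
\begin{equation*}
  \bigl\Vert \bigl(I - P_N^{(j)}\bigr) f \bigr\Vert^2_{L^2_\mu(\mathbf{I})} \;\leq\; C(k)^2 N^{-2k} \, \bigl\Vert D^{(k\mathbf{e}_j)} f \bigr\Vert^2_{L^2_\mu(\mathbf{I})} \;\leq\; C(k)^2 N^{-2k} \, \vert f\vert^2_{\mathbf{I},\mu,k}.
\end{equation*}
Summing over $j$ and absorbing the dimensional factor into the constant delivers the stated bound.

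The genuinely nontrivial ingredient is the univariate estimate, which in the general $\mu_j$ setting relies on Sturm--Liouville structure for the orthogonal family $\{\phi_{i,j}\}$: one integrates by parts against the weight to express the $n$-th spectral coefficient of $f$ in terms of a spectral coefficient of $D^k f$, producing the $N^{-k}$ decay. Given that this step is stated as part of the cited literature, the main work left in the present proof is the tensor-product reduction described above, which is entirely routine; the only care required is to keep track of the contraction property of the partial projections so that the telescoping does not introduce unwanted powers of $N$ or of the full norm of $f$.
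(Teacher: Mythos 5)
Your proposal is correct: the factorization $P_N = P_N^{(1)}\cdots P_N^{(d)}$, the telescoping identity, the contraction property of the partial projections, and the slice-wise application of the univariate estimate together give the stated bound with $\Vert D^{(k\mathbf{e}_j)} f\Vert_{L^2_\mu} \leq \vert f\vert_{\mathbf{I},\mu,k}$ and a dimensional factor absorbed into $C(k)$. The paper itself offers no proof of this proposition --- it is quoted directly from \cite{Hesthaven2008,Canuto2006} --- and your tensor-product reduction, with the univariate Sturm--Liouville estimate deferred to those same references, is exactly the standard argument found there.
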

 
In practice the coefficients $c_{\mathbf{i}}$ in \eqref{eq:SpectralExpansion} are approximated using discrete inner products based on quadrature rules of sufficient accuracy. We will focus here on $d$-dimensional quadrature rules produced by tensorizing univariate Gaussian rules---specifically, for dimension $i$, an $(N_i+1)$-point Gaussian quadrature rule \cite{Gautschi2004}. Let $\left (\mathbf{x}_{\mathbf{i}}, w_{\mathbf{i}} \right )_{{\mathbf{i}}=0}^{\mathbf{N}}$ be the points and weights describing such a rule \cite{Golub1969}.
A $d$-dimensional integral can then be approximated by:
\begin{equation}
  \int_{{\mathbf{I}}} f({\bf x}) d\mu({\bf x}) \approx \sum_{{\mathbf{i}}=0}^{\mathbf{N}} f(\mathbf{x}_{\mathbf{i}}) w_{\mathbf{i}} =: U_{\mathbf{N}} (f) .
\end{equation}
The discrete (and computable) version of the spectral expansion \eqref{eq:SpectralExpansion} is then defined as follows.

\begin{definition}[Discrete projection] Let $({\mathbf{x}}_{\mathbf{j}},w_{\mathbf{j}} )_{{\mathbf{j}}=0}^{\bf N}$ be a set of quadrature points and weights. The discrete projection of $f$ is obtained by the action of the operator $\widetilde{P}_{\mathbf{N}}:L_\mu^2({\mathbf{I}}) \rightarrow \Span(\left\lbrace\Phi_{\mathbf{i}}\right\rbrace_{\mathbf{i}=0}^{\mathbf{N}})$, defined as
\begin{equation}
  \label{eq:DiscreteSpectralExpansion}
  \widetilde{P}_{\mathbf{N}} f = \sum_{\mathbf{i}=0}^{\mathbf{N}} \tilde{c}_{\mathbf{i}}\Phi_{\mathbf{i}} , \qquad \tilde{c}_{\mathbf{i}}= U_{\mathbf{N}}(f \Phi_{\mathbf{i}}) = \sum_{\mathbf{i}=0}^{\mathbf{N}} f({\mathbf{x}}_{\mathbf{j}}) \Phi_{\mathbf{i}}({\mathbf{x}}_{\mathbf{j}}) w_{\mathbf{j}} .
\end{equation}
The operator $\widetilde{P}_{\mathbf{N}}$ is orthogonal on $L_\mu^2({\mathbf{I}})$ only for $\mathbf{N}\rightarrow\infty$.
\end{definition}

\medskip

\noindent This approximation to the orthogonal projection onto $\mathbb{P}_{{\mathbf{N}}}$, the space of polynomials of degree up to ${\mathbf{N}}$, is sometimes called a \textit{pseudospectral} approximation. For simplicity, we have focused on the fully tensorized case and tied the number of quadrature points to the polynomial degree. When the quadrature $U_{\mathbf{N}}$ is a Gauss rule, then the discrete projection is exact for $f\in \mathbb{P}_{{\mathbf{N}}}$. For any $f$, using a quadrature rule that is exact for polynomials up to degree $2 \mathbf{N}$ ensures that potential $\mathcal{O}(1)$ \textit{internal} aliasing errors in \eqref{eq:DiscreteSpectralExpansion} are avoided \cite{Conrad2012}.


\subsection{Interpolation}\label{sec:interpolation}
A function $f$ can also be approximated using interpolation on a set of nodes and assuming a certain level of smoothness in between them. Here we will consider piecewise linear interpolation and polynomial interpolation on closed and bounded domains ${\mathbf{I}} = I_1 \times \cdots \times I_d $. Other interpolation rules could be used inside the same framework for specific problems.

The linear interpolation of a function $f:[a,b]\rightarrow \mathbb{R}$ can be written in terms of basis functions called hat functions: given a set of distinct ordered nodes $\{x_i\}_{i=0}^N \in [a,b]$ with $x_0 = a$ and $x_N = b$, the hat functions are:
\begin{equation}\label{eq:multilin-shapefunc}
  e_i(x) = 
  \begin{cases}
    \frac{x-x_{i-1}}{x_i - x_{i-1}} & \text{if} \; x_{i-1} \leq x \leq x_i \wedge x \geq a\\
    \frac{x-x_{i+1}}{x_{i}-x_{i+1}} & \text{if} \; x_{i} \leq x \leq x_{i+1} \wedge x \leq b\\
    0 & \rm{otherwise}
  \end{cases} .
\end{equation}
When dealing with multiple dimensions, several options are available. A common choice of basis functions have their support over simplices around a node. This allows the basis functions $e_{\mathbf{i}}$ to remain linear. In this paper, we will instead use basis functions supported on the hypercubes adjacent to a node. These basis functions $e_{\mathbf{i}}$ can no longer be linear while preserving the linear interpolation property: they need to be bilinear in two dimensions, trilinear in three dimensions, and so on.
Letting $V$ be the set of piecewise continuous functions on ${\mathbf{I}}$, the multi-linear interpolation operator $I_{\mathbf{N}}:V \rightarrow \mathcal{C}^0({\mathbf{I}}) $ is then defined by
\begin{equation}\label{eq:multilin-interpolation}
  I_{\mathbf{N}} f({\bf x}) = \sum_{{\mathbf{i}}=0}^{\mathbf{N}} \hat{c}_{\mathbf{i}} e_{\mathbf{i}}({\bf x}), \qquad \hat{c}_{\mathbf{i}} = f\left( {\bf x}_{\mathbf{i}} \right),
\end{equation}
where $\{ {\bf x}_{\mathbf{i}} \}_{{\mathbf{i}}=0}^{\mathbf{N}} = \{ x_i^1 \}_{i=0}^{N_1} \times \cdots \times \{x_i^d \}_{i=0}^{N_d}$ is a tensor grid of points. Again we will use the notation $I_N := I_{\mathbf{N}}$ when $N_1 = \ldots = N_d=N$. The multi-linear interpolation (\ref{eq:multilin-interpolation}) is a projection, but in general not an orthogonal projection, on $L^2_\mu({\bf I})$. If the grid points are uniformly distributed, the convergence of this approximation is as follows.
\smallskip
\begin{proposition}[Convergence of linear interpolation \cite{Brenner2008}]
  Let $f \in \mathcal{H}^2_\mu({\mathbf{I}})$. Then
  \begin{equation}
    \Vert f - I_N f \Vert_{L^2_\mu({\mathbf{I}})} \leq C N^{-2} \vert f \vert_{{\mathbf{I}},\mu,2} .  
  \end{equation}
\end{proposition}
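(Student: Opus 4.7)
The plan is to follow the classical finite-element approximation argument (Brenner--Scott, Ch.~4): localize the estimate to each cell of the tensor grid, invoke the Bramble--Hilbert lemma on a reference cell, then rescale and reassemble. Throughout I will assume (as in the statement) that the grid is uniform in each direction, with $h_j = |I_j|/N$, and that the density of $\mu$ with respect to Lebesgue measure on $\mathbf{I}$ is bounded above so that the weighted norm is controlled by its unweighted counterpart; the general case follows by a standard weighted Bramble--Hilbert variant.

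First I would introduce the family $\mathcal{K}$ of closed axis-aligned hypercubes $K = \prod_{j=1}^d [x_{i_j}^j, x_{i_j+1}^j]$ induced by the grid. On each such $K$, the operator $I_N$ coincides with the unique $\mathbb{Q}_1(K)$ interpolant matching $f$ at the $2^d$ vertices of $K$; this is precisely the content of the tensor-product definition \eqref{eq:multilin-interpolation}. Thus it suffices to bound $\|f - I_N f\|_{L^2(K)}$ cellwise and sum.

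Second I would work on a reference hypercube $\hat{K} = [0,1]^d$ with interpolant $\hat{I}: H^2(\hat{K}) \to \mathbb{Q}_1(\hat{K})$. Since $\hat{I}$ reproduces all polynomials in $\mathbb{Q}_1(\hat{K})$ and in particular all polynomials of total degree $\leq 1$, the Bramble--Hilbert lemma yields
\begin{equation*}
  \|\hat{f} - \hat{I}\hat{f}\|_{L^2(\hat{K})} \leq \hat{C}\, |\hat{f}|_{H^2(\hat{K})}
\end{equation*}
for every $\hat{f} \in H^2(\hat{K})$, with $\hat{C}$ depending only on $d$. Pulling this back through the affine map $\phi_K: \hat{K} \to K$, $\phi_K(\hat{x}) = x_K + \mathrm{diag}(h_1,\ldots,h_d)\,\hat{x}$, the standard scaling identities for $L^2$ norms and second derivatives produce
\begin{equation*}
  \|f - I_N f\|_{L^2(K)}^2 \leq C\, h^4\, |f|_{H^2(K)}^2,
\end{equation*}
where $h = \max_j h_j \lesssim 1/N$ and $C$ absorbs the $|I_j|$'s and the bound on $d\mu/dx$.

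Finally I would sum over $K \in \mathcal{K}$, using the additivity of the $L^2$ norm and of the squared $H^2$ seminorm over the partition, to obtain
\begin{equation*}
  \|f - I_N f\|_{L^2_\mu(\mathbf{I})}^2 \leq C\, N^{-4}\, |f|_{\mathbf{I},\mu,2}^2,
\end{equation*}
and take square roots. The main obstacle is the weighted-measure issue: the textbook Bramble--Hilbert bound is stated for Lebesgue measure, and for a general $\sigma$-finite product $\mu$ on $\mathbf{I}$ one must either assume a bounded Radon--Nikodym density (the simplest fix, harmless in the UQ setting targeted in this paper) or invoke a weighted version of Bramble--Hilbert in which the constants depend on regularity of the weight. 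A secondary technical subtlety is that pointwise evaluation of $f$ at the grid vertices requires $H^2 \hookrightarrow C^0$, which holds in low dimensions; in higher $d$ one instead reads \eqref{eq:multilin-interpolation} in the sense of the trace on the grid skeleton, a step already implicit in the Brenner--Scott framework.
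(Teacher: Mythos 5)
The paper states this proposition as a known result, citing Brenner--Scott, and supplies no proof of its own; your argument is precisely the standard proof from that reference (cellwise localization, Bramble--Hilbert on the reference hypercube since the $\mathbb{Q}_1$ interpolant reproduces $\mathbb{P}_1$, anisotropic scaling, and reassembly over the partition), and it is correct. The two caveats you flag yourself --- the need for a locally two-sided bound on the density of $\mu$ so that the weighted and unweighted norms are cellwise equivalent (boundedness above alone does not let you pass back to $\vert f \vert_{{\mathbf{I}},\mu,2}$ on the right-hand side), and the fact that nodal evaluation of a raw $H^2$ function requires $H^2 \hookrightarrow C^0$, i.e.\ $d \leq 3$, which a ``trace on the grid skeleton'' does not repair but which the paper sidesteps by defining $I_{\mathbf{N}}$ only on piecewise continuous functions --- are genuine but do not affect the regime in which the paper invokes the result.
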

\medskip

The second type of interpolation we will use in this paper is Lagrange polynomial interpolation. It is based on the Lagrange polynomials $\{ l_i \}_{i=1}^N$, defined in the univariate case as
\begin{equation}
  \label{eq:LagrangePolynomials}
  l_i(x) = \prod_{\substack{0 \leq m < k \\ m\neq i}} \frac{x-x_m}{x_i-x_m},
\end{equation}
where the nodes $\{ x_i \}_{i=1}^k \in [a,b]$ are typically distributed non-uniformly over the interval; an example is the Gauss nodes used in Section \ref{sec:projection}. This choice is designed to avoid the \textit{Runge phenomenon} and hence assure a more accurate approximation. The univariate polynomial interpolation operator $\Pi_N: V \rightarrow {\rm span}\left( \{ l_i \}_{i=0}^{N} \right)$ is given by
\begin{equation}
  \label{eq:LagrangeInterpolation}
  \Pi_{N} f(x) = \sum_{i=0}^N \hat{c}_{i} l_{i}(x), \qquad \hat{c}_{i} = f\left( {x}_{i} \right).
\end{equation}
The polynomial interpolation $\Pi_{N}$ is also a projection, but in general not orthogonal on $L^2_\mu({\bf I})$.
Lagrange interpolation in the multivariate case presents many theoretical issues when used for interpolation on arbitrary nodes. In the scope of this paper, however, we will only consider tensor grids of nodes, for which the theory follows easily from the univariate case. As we will see in the next section, we will never explicitly construct these tensor grids, thanks to the TT decomposition and cross-interpolation. But the convergence properties of Lagrange interpolation on tensor grids will nonetheless be useful for analysis purposes.
The convergence of the Lagrange interpolant is again dictated by the smoothness of the function being approximated.
\smallskip
\begin{proposition}[Convergence of Lagrange interpolation \cite{Bernardi1992,Canuto2006}]
  Let $f \in \mathcal{H}^k_\mu({\mathbf{I}})$ for $k \geq 1$. Then
  \begin{equation}
    \Vert f - \Pi_N f \Vert_{L^2_\mu({\mathbf{I}})} \leq C(k) N^{-k} \vert f \vert_{{\mathbf{I}},\mu,k} .
  \end{equation}
\end{proposition}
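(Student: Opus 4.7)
The plan is to reduce the multivariate estimate to the classical univariate Jackson-type bound for Lagrange interpolation at Gauss nodes, and then propagate the one-dimensional rate through the tensor-product structure of $\Pi_N$. In the univariate setting, for each coordinate $j$ the estimate $\Vert g - \Pi_N^{(j)} g \Vert_{L^2_{\mu_j}(I_j)} \leq C(k) N^{-k} \vert g \vert_{I_j,\mu_j,k}$ is a standard Bernardi--Maday / CHQZ result; I would quote it directly from the cited references. Its ingredients are (i) the spectral bound of the preceding proposition applied to the best polynomial approximation, (ii) exactness of Gauss quadrature on $\mathbb{P}_{2N+1}$, which yields an equivalence of continuous and discrete $L^2_{\mu_j}$ norms on polynomials of degree $\leq N$, and (iii) Sobolev embedding $\mathcal{H}^k_{\mu_j}(I_j) \hookrightarrow C^0(I_j)$ for $k \geq 1$, so that the pointwise evaluations defining $\Pi_N^{(j)} g$ make sense.

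For the tensor case, since the nodes form a Cartesian grid $\{x_i^1\}\times\cdots\times\{x_i^d\}$, the operator factorizes as the composition of univariate interpolants acting in each coordinate,
$$\Pi_N = \Pi_N^{(1)}\Pi_N^{(2)}\cdots \Pi_N^{(d)},$$
and these operators commute. I would then exploit the telescoping identity
$$I - \Pi_N^{(1)}\cdots \Pi_N^{(d)} \;=\; \sum_{j=1}^{d} \Pi_N^{(1)}\cdots \Pi_N^{(j-1)}\bigl(I - \Pi_N^{(j)}\bigr),$$
with the convention that the empty product is the identity, and apply the triangle inequality to obtain
$$\Vert f - \Pi_N f \Vert_{L^2_\mu(\mathbf{I})} \;\leq\; \sum_{j=1}^{d} \bigl\Vert \Pi_N^{(1)}\cdots \Pi_N^{(j-1)}(I - \Pi_N^{(j)}) f \bigr\Vert_{L^2_\mu(\mathbf{I})}.$$

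For each summand I would first strip the outer product of univariate operators using their $L^2$-stability (acting slicewise in one coordinate at a time and applying Fubini), and then estimate the inner factor via the univariate Jackson bound applied pointwise in the remaining variables. Concretely, by Fubini,
$$\bigl\Vert (I - \Pi_N^{(j)}) f \bigr\Vert_{L^2_\mu(\mathbf{I})}^2 \;=\; \int_{\mathbf{I}_{\setminus j}} \bigl\Vert f(\cdot, \mathbf{x}_{\setminus j}) - \Pi_N^{(j)} f(\cdot, \mathbf{x}_{\setminus j}) \bigr\Vert_{L^2_{\mu_j}(I_j)}^2 \, d\mu_{\setminus j}(\mathbf{x}_{\setminus j}),$$
and the inner norm is bounded by $C(k)^2 N^{-2k} \vert f(\cdot,\mathbf{x}_{\setminus j}) \vert_{I_j,\mu_j,k}^2$, which integrates to $C(k)^2 N^{-2k} \Vert D^{(k e_j)} f \Vert_{L^2_\mu(\mathbf{I})}^2 \leq C(k)^2 N^{-2k} \vert f \vert_{\mathbf{I},\mu,k}^2$. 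Summing over $j \in \{1,\ldots,d\}$ and absorbing the resulting factor of $d$ into $C(k)$ yields the claim.

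The main obstacle is the $L^2$-stability of the univariate Lagrange operators $\Pi_N^{(j)}$ on the nonpolynomial inputs $f(\cdot,\mathbf{x}_{\setminus j})$ that appear after the telescope. Lagrange interpolation is only conditionally $L^2$-stable, so one must split $f = p_N + (f - p_N)$ with $p_N$ a near-best polynomial approximation in one coordinate, use the norm equivalence on $\mathbb{P}_N$ to control $\Pi_N^{(j)} p_N$, and bound $\Pi_N^{(j)}(f - p_N)$ via the Lebesgue constant of Gauss interpolation together with the Sobolev embedding into $C^0$ (for which $k \geq 1$ is exactly the needed hypothesis). Once that stability step is in place, the remainder of the argument is the clean tensorization sketched above.
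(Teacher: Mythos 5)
The paper does not prove this proposition; it is quoted as a classical result with pointers to Bernardi--Maday and Canuto et al., so there is no in-paper argument to compare against. Your sketch follows the standard route those references take (univariate Jackson-type estimate at Gauss nodes, then tensorization by telescoping), and the univariate ingredients you list --- exactness of Gauss quadrature on $\mathbb{P}_{2N+1}$, the discrete/continuous norm equivalence on $\mathbb{P}_N$, and the embedding $\mathcal{H}^k_{\mu_j}(I_j)\hookrightarrow C^0(I_j)$ for $k\geq 1$ --- are the right ones.

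The genuine gap is in the tensorization step, and it is exactly the point you flag but do not resolve. After telescoping, the summand $\Pi_N^{(1)}\cdots\Pi_N^{(j-1)}(I-\Pi_N^{(j)})f$ requires you to apply Lagrange interpolation in the outer coordinates to the remainder $g=(I-\Pi_N^{(j)})f$. Interpolation is not $L^2$-bounded, and the fix you propose (Lebesgue constant plus Sobolev embedding) gives at best a bound of the form $\Vert \Pi_N^{(i)}h\Vert_{L^2_{\mu_i}}\leq \mu_i(I_i)^{1/2}\Vert h\Vert_{L^\infty}$ in each outer coordinate; stripping $j-1$ such operators therefore forces you to control $g$ in a mixed norm that is $L^\infty$ in $x_1,\ldots,x_{j-1}$ and $L^2$ in the rest. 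To make \emph{that} quantity decay like $N^{-k}$ you need the univariate error estimate in the sup norm of the outer variables, which costs additional (mixed) derivatives of $f$ --- derivatives not controlled by the isotropic seminorm $\vert f\vert_{\mathbf{I},\mu,k}$. Relatedly, for $d\geq 2$ and $k\leq d/2$ the point values defining $\Pi_N f$ are not even meaningful for a general $f\in\mathcal{H}^k_\mu(\mathbf{I})$, so the hypothesis $k\geq 1$ alone cannot carry the multivariate argument; the cited references assume $k>d/2$ (or work with mixed-regularity spaces) precisely to close this step, typically via $H^1$-stability of the interpolant or by routing through the $L^2$-projection and an aliasing estimate rather than through an $L^\infty$ Lebesgue-constant bound. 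In short: the skeleton is right, but the stability lemma you defer to is where the entire difficulty lives, and the version of it you sketch does not suffice in dimension $d\geq 2$.
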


Recall that Lagrange interpolation on $N+1$ Gauss nodes is equivalent to the degree-$N$ pseudospectral approximation (discrete projection) computed with the same nodes \cite{boyd2001chebyshev}; this equivalence also extends to the tensorized case.
%


\section{Spectral tensor-train decomposition}\label{sec:spectr-tens-train}

Now we blend the discrete tensor-train decomposition of Section \ref{subsec:TT-decomposition} with the polynomial approximations described in Section \ref{sec:approximation-theory}. First, we construct a continuous version of the tensor-train decomposition, termed the \textit{functional} tensor-train (FTT) decomposition. The construction proceeds by recursively decomposing non-symmetric square integrable kernels through auxiliary symmetric square integrable kernels, as in \cite{Schmidt1907}. Next, we prove that this decomposition converges under certain regularity conditions, and that the cores of the FTT decomposition inherit the regularity of the original function, and thus are amenable to spectral approximation when the original function is smooth. Based on this analysis, we propose an efficient approach to high-dimensional function approximation that employs only \textit{one-dimensional} polynomial approximations of the cores of the FTT decomposition, and we analyze the convergence of these approximations.

\subsection{Functional tensor-train decomposition}
Let $X \times Y \subseteq \mathbb{R}^d$ and let $ f $ be a Hilbert-Schmidt kernel with respect to the finite measure $\mu:\mathcal{B}(X \times Y)\rightarrow \mathbb{R} $, i.e., $f\in L_{\mu}^2(X\times Y)$. We restrict our attention to product measures, so $\mu = \mu_x \times \mu_y $. The operator
\begin{equation}\label{eq:IntegralOperatorBase}
  \begin{aligned}
    T : L_{\mu_{y}}^2(Y) &\rightarrow L_{\mu_x}^2(X)\\
    g &\mapsto \int_{Y} f(x,y) g(y) d\mu_y(y)
  \end{aligned} 
\end{equation}
is linear, bounded and compact \cite[Cor.~4.6]{halmos1978bounded}.
The Hilbert adjoint operator of $T$ is $T^{\ast}:L_{\mu_x}^2(X) \rightarrow L_{\mu_{y}}^2(Y)$.
Then $TT^{\ast}: L_{\mu_x}^2(X) \rightarrow L_{\mu_x}^2(X)$ is a compact Hermitian operator. By the spectral theory of compact operators, the spectrum of $TT^{\ast}$ comprises a countable set of eigenvalues whose only point of accumulation is zero \cite[Thm~8.3-1,8.6-4]{Kreyszig2007}. Since $TT^{\ast}$ is self-adjoint, its eigenfunctions $ \left\lbrace \gamma(x;i) \right\rbrace_{i=1}^\infty \subset L_{\mu_x}^2(X) $ form an orthonormal basis \cite[Cor.~4.7]{halmos1978bounded}.
The operator $T^{\ast}T: L_{\mu_{y}}^2(Y) \rightarrow L_{\mu_{y}}^2(Y)$ is also self-adjoint and compact, with eigenfunctions $ \left\lbrace \varphi(i;y) \right\rbrace_{i=1}^\infty \subset L_{\mu_{y}}^2(Y)$ and the same eigenvalues as $TT^{\ast}$. Then we have the following expansion of $f$.
\smallskip
\begin{definition}[{\rm Schmidt decomposition}]
Let the integral operators $TT^{\ast}$ and $T^{\ast}T$ have eigenvalues $\{ \lambda(i) \}_{i=1}^\infty$ and associated eigenfunctions $\left\lbrace \gamma(x;i) \right\rbrace_{i=1}^\infty$ and $\left\lbrace \varphi(i;y) \right\rbrace_{i=1}^\infty$, respectively. Then the Schmidt decomposition of $f$ is:
  \begin{equation}
    \label{eq:functional-SVD}
    f = \sum_{i=1}^\infty \sqrt{\lambda(i)} \gamma(\,\cdot\,;i) \otimes \varphi(i;\,\cdot\,) \;.
  \end{equation}
\end{definition}
In the general setting considered here, the convergence of \eqref{eq:functional-SVD} is in $L_\mu^2$.

\smallskip

Now  let $I_1 \times \cdots \times I_d = \mathbf{I} \subseteq \mathbb{R}^d$ and let $ f $ be a Hilbert-Schmidt kernel with respect to the finite measure $\mu:\mathcal{B}({\bf I})\rightarrow \mathbb{R} $, i.e., $f\in L_{\mu}^2({\bf I})$. We assume $\mu = \prod_{i=1}^d \mu_i $. Applying the Schmidt decomposition to $f$ with $X=I_1$ and $Y=I_2\times \cdots \times I_d$, we obtain
\begin{equation}\label{eq:FirstStepContDecomp}
  f({\bf x}) = \sum_{\alpha_{1}=1}^\infty \sqrt{\lambda_1(\alpha_1)} \, \gamma_1\left(x_1;\alpha_{1}\right) \varphi_1\left(\alpha_1;x_2,\ldots,x_{d}\right)  \;.
\end{equation}
Now proceed forward by letting $X=\mathbb{N} \times I_2$ and $Y=I_3\times \cdots \times I_d$, and let $\tau$ be the counting measure on $\mathbb{N}$. From the definition of counting measure, the orthonormality of $\varphi(\alpha_i;\,\cdot\,)$ for all $\alpha_i \in \mathbb{N}$, and the fact that $f \in L_\mu^2({\bf I})$, we have:
\begin{equation}
  \label{eq:HilbertSchmidtVarphi}
  \begin{aligned}
    &\int_{X\times Y} \left\vert \sqrt{\lambda_1(\alpha_1)}\varphi_1(\alpha_1;x_2,\ldots,x_d) \right\vert^2 d\tau(\alpha_1) d\mu_2(x_2) \cdots d\mu_d(x_d) = \\
    &\sum_{\alpha_1=1}^\infty \lambda_1(\alpha_1) \int_{I_2\times \cdots \times I_d} \left\vert \varphi_1(\alpha_1;x_2,\ldots,x_d) \right\vert^2 d\mu_2(x_2) \cdots d\mu_d(x_d) = \sum_{\alpha_1=1}^\infty \lambda_1(\alpha_1) < \infty \;.
  \end{aligned}
\end{equation}
This means that $ \left( \sqrt{\lambda_1}\varphi_1 \right) \in L_{\tau \times \mu_2 \times \cdots \times \mu_d}^2(X\times Y)$ and thus it is a Hilbert-Schmidt kernel. Then, using the Schmidt decomposition we obtain
\begin{equation}
  \label{eq:SVDSecondSubStep}
  \sqrt{\lambda_1(\alpha_1)}\varphi_1(\alpha_1;x_2,\ldots,x_d) = \sum_{\alpha_2=1}^\infty \sqrt{\lambda_2(\alpha_2)} \gamma_2(\alpha_1;x_2;\alpha_2) \varphi_2(\alpha_2;x_3,\ldots,x_d) \;.
\end{equation}
This expansion can now be substituted into \eqref{eq:FirstStepContDecomp}:
\begin{equation}
  \label{eq:SVDSecondStep}
   f({\bf x}) = \sum_{\alpha_{1}=1}^\infty \sum_{\alpha_2=1}^\infty \sqrt{\lambda_2(\alpha_2)} \gamma_1\left(x_1;\alpha_{1}\right) \gamma_2(\alpha_1;x_2;\alpha_2) \varphi_2(\alpha_2;x_3,\ldots,x_d) \;.
\end{equation}
Proceeding recursively one obtains
\begin{equation}
  \label{eq:ContTTDecomposition}
   f({\bf x}) = \sum_{\alpha_{1},\ldots,\alpha_{d-1}=1}^\infty \gamma_1\left(\alpha_0;x_1;\alpha_{1}\right) \gamma_2(\alpha_1;x_2;\alpha_2) \cdots \gamma_d\left(\alpha_{d-1};x_d;\alpha_{d}\right) \;,
\end{equation}
where $\alpha_0=\alpha_d=1$ and $\gamma_d\left(\alpha_{d-1};x_d;\alpha_{d}\right) := \sqrt{\lambda_{d-1}(\alpha_{d-1})} \varphi_d(\alpha_{d-1};x_d)$. We will call this format the \textit{functional} tensor-train (FTT) decomposition.

If we now truncate the FTT decomposition, we obtain the functional version of the tensor-train approximation.
\smallskip
\begin{definition}[{\rm FTT approximation}] Let $I_1 \times \cdots \times I_d = \mathbf{I} \subseteq \mathbb{R}^d$ and $f \in L_{\mu}^2({\bf I})$. For ${\bf r} = (1,r_1,\ldots,r_{d-1},1)$, a functional TT-rank--${\bf r}$ approximation of $f$ is:
\begin{equation}
  \label{eq:ContinuousTTapprox}
    f_{TT}({\bf x}) := \sum_{\alpha_0,\ldots,\alpha_d=1}^{\bf r} \gamma_1(\alpha_0,x_1,\alpha_1) \cdots \gamma_d(\alpha_{d-1},x_d,\alpha_d) \;,
\end{equation}
where $\gamma_i(\alpha_{i-1},\cdot,\alpha_{i}) \in L_{\mu_i}^2 $ and $ \left\langle\gamma_k(i,\cdot,m),\gamma_{k}(i,\cdot,n)\right\rangle_{L_{\mu_k}^2} = \delta_{mn} $. The residual of this approximation will be denoted by $R_{TT} := f-f_{TT}$. We will call $\{\gamma_i\}_{i=1}^d$ the cores of the approximation.
\end{definition}

\subsection{Convergence of the FTT approximation}
In this section we will investigate the convergecnce of \eqref{eq:ContinuousTTapprox} and in particular we will try to connect this convergence with the regularity of the approximated function $f$.
\smallskip
\begin{proposition}\label{prop:FTT-conv}
Let the functional tensor-train decomposition \eqref{eq:ContTTDecomposition} be truncated retaining the largest singular values $\{ \{ \sqrt{ \lambda_i (\alpha_i)}  \}_{\alpha_i=1}^{r_i} \}_{i=1}^d$. Then the residual of the approximation \eqref{eq:ContinuousTTapprox} fulfills the condition:
\rvnote*{\#1-1}{
\begin{equation}
  \label{eq:Cont-TT-convergence}
  \Vert R_{TT} \Vert^2_{L_{\mu}^2} = \min_{\substack{ g \in L_{\mu}^2 \\ \mathrm{TT-ranks}(g)={\bf r}}} \Vert f - g \Vert^2_{L_{\mu}^2} \leq \sum_{i=1}^{d-1} \sum_{\alpha_i=r_i +1}^\infty \lambda_i(\alpha_i)\;.
\end{equation}}
\end{proposition}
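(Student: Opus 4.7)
The plan is to proceed by induction on the dimension $d$, using Schmidt's theorem (the Hilbert--Schmidt analog of Eckart--Young) at each level of the FTT construction and Pythagorean additivity of orthogonal residuals. In the base case $d=2$, the FTT of a bivariate Hilbert--Schmidt kernel $f\in L_\mu^2(I_1\times I_2)$ coincides with its functional SVD; Schmidt's theorem then immediately yields both the optimality statement and the explicit error $\sum_{\alpha_1>r_1}\lambda_1(\alpha_1)$, establishing the claim for $d=2$.

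For the inductive step, assume the result in dimension $d-1$. Apply the first-level functional SVD to $f$, separating $x_1$ from the remaining variables,
\[
f = \sum_{\alpha_1\geq 1}\sqrt{\lambda_1(\alpha_1)}\,\gamma_1(x_1;\alpha_1)\,\varphi_1(\alpha_1;x_2,\ldots,x_d),
\]
and truncate at $\alpha_1\leq r_1$. The resulting outer residual has squared $L_\mu^2$-norm $\sum_{\alpha_1>r_1}\lambda_1(\alpha_1)$, and is $L_\mu^2$-orthogonal---by the $L_{\mu_1}^2$-orthonormality of $\{\gamma_1(\cdot;\alpha_1)\}$---to any residual introduced when the kept family $\{\sqrt{\lambda_1(\alpha_1)}\,\varphi_1(\alpha_1;\cdot)\}_{\alpha_1\leq r_1}$ is further approximated. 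I would view the kept family as a single Hilbert--Schmidt kernel on $(\{1,\ldots,r_1\},\tau)\times\prod_{k\geq 2}(I_k,\mu_k)$---essentially a $(d-1)$-dimensional problem carrying an additional counting-measure slot---and invoke the induction hypothesis with ranks $(r_2,\ldots,r_{d-1})$. Pythagoras then assembles the step-one error and the inductive error into the claimed total. Optimality is inherited from (i) the extremality of each single-level functional SVD, (ii) the induction hypothesis applied to the nested subproblem, and (iii) closedness of the set of FTT-rank-$\mathbf{r}$ functions in $L_\mu^2$, guaranteeing that the minimum in the stated identity is attained.

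The main obstacle will be the careful bookkeeping of normalizations in the inductive step: the deeper cores $\gamma_k$ for $k\geq 2$ are orthonormal only in the joint space $L_{\tau\times\mu_k}^2$, not in $L_{\mu_k}^2$ for each fixed value of $\alpha_{k-1}$. Propagating this mixed-measure structure through the recursion is precisely what produces the combinatorial factor $\prod_{j<i}r_j$ in the explicit error sum, since at level $i$ the effective left space has already been inflated by the $r_1\cdots r_{i-1}$ retained inner indices. In addition, justifying the middle equality---strict optimality of sequential SVD truncation, rather than the $\sqrt{d-1}$-quasi-optimality that is the sharpest discrete TT-SVD bound---will rely on the continuous, nested structure of the functional decomposition: the single-level extremal problems can be solved independently at each level, avoiding the coupling that produces the $\sqrt{d-1}$ loss in the discrete case.
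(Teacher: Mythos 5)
Your proposal follows essentially the same route as the paper's proof: the paper also unrolls the FTT construction level by level, uses the $L^2_\mu$-orthogonality of the retained singular functions to get a Pythagorean splitting of the residual at each stage (which is exactly your induction on $d$, with the kept family re-interpreted as a Hilbert--Schmidt kernel over the counting-measure slot), and obtains the factor $\prod_{j<i} r_j$ from the retained indices at earlier levels. The only notable difference is cosmetic: the paper disposes of the optimality (middle) equality by citing the extremality of each single-level spectral projection, whereas you flag it as the delicate point and sketch the same justification in slightly more detail.
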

\begin{proof}
The first equality is due to the construction of $f_{TT}$ by a sequence of orthogonal projections that minimize the error in the $L^2_\mu$-norm. These projections are onto the subspaces spanned by the eigenfunctions of the Hermitian operators induced by the tensor $f$, and are thus optimal \cite{Smithies1937,Simsa1992}.

The error bound is obtained by induction. Below, and for the remainder of the proof, we omit the arguments of $\lambda_1$, $\gamma_1$, $\varphi_1$, etc.\ in order to simplify the notation. The first step of the decomposition \eqref{eq:FirstStepContDecomp} leads to:
\begin{equation}
  \label{eq:ErrorBoundFTT-step1}
  \begin{aligned}
    \Vert f - f_{TT} \Vert^2_{L_{\mu}^2} &= \left\Vert f - \sum_{\alpha_1=1}^{r_1} \sqrt{\lambda_1} \gamma_1 \varphi_1 + \sum_{\alpha_1=1}^{r_1} \sqrt{\lambda_1} \gamma_1 \varphi_1 -f_{TT} \right\Vert^2_{L_{\mu}^2} \\
    &= \sum_{\alpha_1=r_1+1}^{\infty} \lambda_1(\alpha_1) + \underbrace{\left\Vert \sum_{\alpha_1=1}^{r_1} \sqrt{\lambda_1} \gamma_1 \varphi_1 -f_{TT} \right\Vert^2_{L_{\mu}^2}}_{g_1(r_1)} \;,
  \end{aligned}
\end{equation}
where the second equality above is due to the following orthogonality
\begin{equation}
  \label{eq:ErrorBoundFTT-Ortho}
  \left\langle f - \sum_{\alpha_1=1}^{r_1} \sqrt{\lambda_1} \gamma_1 \varphi_1, \, f_{TT} - \sum_{\alpha_1=1}^{r_1} \sqrt{\lambda_1} \gamma_1 \varphi_1 \right\rangle_{L_{\mu}^2} = 0 \;,
\end{equation}
which follows from the orthogonality of $\{ \gamma_1(\alpha_1;\,\cdot\,) \}$ and of $\{ \varphi_1(\alpha_1;\,\cdot\,) \}$.
Next, let $\left( \sqrt{\lambda_1} \varphi_1 \right)(\alpha_1;x_2,\ldots,x_d) := \sqrt{\lambda_1(\alpha_1)} \varphi_1(\alpha_1;x_2,\ldots,x_d)$ and apply the second step of the decomposition to the last term of \eqref{eq:ErrorBoundFTT-step1}:
\rvnote*{\#1-1}{
\begin{equation}
  \label{eq:ErrorBoundFTT-step2}
  \begin{aligned}
    g_1(r_1) &= \left\Vert \sum_{\alpha_1=1}^{r_1} \gamma_1 ( \sqrt{\lambda_1} \varphi_1 ) - \sum_{\alpha_1=1}^{r_1}\sum_{\alpha_2=1}^{r_2} \gamma_1 \sqrt{\lambda_2} \gamma_2 \varphi_2 + 
      \sum_{\alpha_1=1}^{r_1}\sum_{\alpha_2=1}^{r_2} \gamma_1 \sqrt{\lambda_2} \gamma_2 \varphi_2 - f_{TT} \right\Vert^2_{L_{\mu}^2} \\
    & = \left\Vert \sum_{\alpha_1=1}^{r_1} \gamma_1 \sum_{\alpha_2=r_2+1}^{\infty} \sqrt{\lambda_2} \gamma_2 \varphi_2 \right\Vert^2_{L_{\mu}^2} + \left\Vert \sum_{\alpha_1=1}^{r_1}\sum_{\alpha_2=1}^{r_2} \gamma_1 \gamma_2 
( \sqrt{\lambda_2} \varphi_2 ) - f_{TT} \right\Vert^2_{L_{\mu}^2} \;. \\
  \end{aligned}
\end{equation}
The first term can be simplified as
\begin{equation}
  \label{eq:ErrorBoundFTT-step2-substep1}
  \begin{aligned}
    \left\Vert \sum_{\alpha_1=1}^{r_1} \gamma_1 \sum_{\alpha_2=r_2+1}^{\infty} \sqrt{\lambda_2} \gamma_2 \varphi_2 \right\Vert^2_{L_{\mu}^2} &= \sum_{\alpha_1=1}^{r_1} \sum_{\alpha_2=r_2+1}^{\infty} \lambda_2 \left\Vert \gamma_2(\alpha_1;\cdot;\alpha_2) \right\Vert^2_{L_{\mu_2}^2} \\
    &= \sum_{\alpha_2=r_2+1}^{\infty} \lambda_2 \left( \sum_{\alpha_1=1}^{\infty} \left\Vert \gamma_2 \right\Vert^2_{L_{\mu_2}^2} - \sum_{\alpha_1=r_1+1}^{\infty} \left\Vert \gamma_2 \right\Vert^2_{L_{\mu_2}^2} \right) \\
    &\leq \sum_{\alpha_2=r_2+1}^{\infty} \lambda_2 \;,
  \end{aligned}
\end{equation}
where the orthonormality property $\sum_{\alpha_1=1}^{\infty} \left\Vert \gamma_2(\alpha_1;\cdot;\alpha_2) \right\Vert^2_{L_{\mu_2}^2} = \left\Vert \gamma_2(\cdot;\cdot;\alpha_2) \right\Vert^2_{L_{\tau\times\mu_2}^2} = 1$ is used. Then
\begin{equation}
  \label{eq:ErrorBoundFTT-step2-substep2}
    g_1(r_1) \leq \sum_{\alpha_2=r_2+1}^{\infty} \lambda_2 + \left\Vert \sum_{\alpha_1=1}^{r_1}\sum_{\alpha_2=1}^{r_2} \gamma_1 \gamma_2 ( \sqrt{\lambda_2} \varphi_2 ) - f_{TT} \right\Vert^2_{L_{\mu}^2} \;. \\
\end{equation}
Plugging \eqref{eq:ErrorBoundFTT-step2-substep2} into \eqref{eq:ErrorBoundFTT-step1} and proceeding by induction, the bound \eqref{eq:Cont-TT-convergence} is obtained.}
\hfill \end{proof}

\medskip
The result given in Proposition \ref{prop:FTT-conv} does not directly involve any properties of the function $f$. Now we will try to link the error of the FTT approximation with the regularity of $f$. To do so, we will use the following auxiliary results: Proposition~\ref{prop:SchwabDecay}, which is a particular case of \cite[Prop.~2.21]{Schwab2006}, and Lemmas~\ref{lemma:SobolevSemiVsNorm}  and \ref{lemma:SobolevPhiVsF}, whose proofs are given in Appendix \ref{sec:sobol-cont-kern}.
\smallskip

\begin{proposition}\label{prop:SchwabDecay}
  Let ${\bf I} \subset \mathbb{R}^d$ be a bounded domain and $V\in L^2_{\mu \otimes \mu}({\bf I}\times {\bf I})$ be the symmetric kernel of the compact non-negative integral operator $\mathcal{V}:L^2_\mu({\bf I})\rightarrow L^2_\mu({\bf I})$. If $V \in \mathcal{H}^{k}_\mu({\bf I}\times {\bf I})$ with $k>0$ and $\{\lambda_m\}_{m\geq 1} $ denotes the eigenvalue sequence of $\mathcal{V}$, then
  \begin{equation}
    \label{eq:SchwabDecay}
    \lambda_m \leq \vert V \vert_{{\bf I}\times{\bf I},\mu,k} m^{-k/d} \qquad \forall m\geq 1 \;.
  \end{equation}
\end{proposition}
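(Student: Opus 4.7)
The plan is to combine the variational characterization of the eigenvalues of a non-negative compact self-adjoint operator with a low-rank polynomial approximation of its kernel, using the Sobolev-type error bound from Proposition~\ref{prop:SpectralConvergence}. Since $\mathcal{V}$ is compact, self-adjoint, and non-negative, its eigenvalues coincide with its singular values, and the Schmidt/Eckart--Young--Mirsky theorem for compact operators gives
\begin{equation*}
  \lambda_m \;=\; \min\bigl\{\, \|\mathcal{V}-A\|_{\mathrm{op}} \;:\; \operatorname{rank}(A) \leq m-1 \,\bigr\}.
\end{equation*}
It therefore suffices, for each $m$, to exhibit an operator $A$ of rank at most $m-1$ whose distance to $\mathcal{V}$ can be controlled by the Sobolev seminorm of $V$.

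To construct such an $A$, I would apply the spectral expansion of Definition~\ref{def:SpectralExpansion} to the kernel $V \in L^2_{\mu\otimes\mu}(\mathbf{I}\times\mathbf{I})$, viewed as an element of $L^2$ on the $2d$-dimensional product domain. Let $P_N V$ denote the orthogonal projection of $V$ onto tensor-product polynomials of degree at most $N$ in each of the $2d$ variables. The key observation is that this projection retains tensor structure in the $(x,y)$-split,
\begin{equation*}
  (P_N V)(x,y) \;=\; \sum_{0 \leq \mathbf{i} \leq \mathbf{N}} \Phi_{\mathbf{i}}(x)\, \psi_{\mathbf{i}}(y), \qquad \psi_{\mathbf{i}}(y) \;:=\; \sum_{0 \leq \mathbf{j} \leq \mathbf{N}} c_{\mathbf{i}\mathbf{j}}\, \Phi_{\mathbf{j}}(y),
\end{equation*}
so the associated integral operator $A_N$ has rank at most $(N+1)^d$. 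Since the residual kernel $V - P_N V$ is Hilbert--Schmidt with Hilbert--Schmidt norm $\|V - P_N V\|_{L^2_{\mu\otimes\mu}}$, which dominates the operator norm, and since Proposition~\ref{prop:SpectralConvergence} applied on $\mathbf{I}\times\mathbf{I}$ gives a convergence rate controlled by the Sobolev seminorm of $V$, one obtains
\begin{equation*}
  \|\mathcal{V} - A_N\|_{\mathrm{op}} \;\leq\; \|V - P_N V\|_{L^2_{\mu\otimes\mu}} \;\leq\; C(k)\, N^{-k}\, |V|_{\mathbf{I}\times\mathbf{I},\mu,k}.
\end{equation*}

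Given $m \geq 1$, I would then choose $N$ to be the largest integer with $(N+1)^d \leq m$, so that $A_N$ is admissible in the variational principle and $N \gtrsim m^{1/d}$. Substitution yields $\lambda_m \lesssim m^{-k/d}\, |V|_{\mathbf{I}\times\mathbf{I},\mu,k}$, which is the stated estimate up to a multiplicative constant. The main obstacle I anticipate is sharpening this constant to the unadorned form written in \eqref{eq:SchwabDecay}: the generic orthogonal-polynomial argument above produces a $k$- and $d$-dependent factor, and removing it would require either tuning the basis and truncation level to cancel the constant exactly or invoking the sharper Weyl/Birman--Solomyak-type spectral estimates that presumably underpin the cited proposition. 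For the downstream applications later in Section~\ref{sec:spectr-tens-train}, only the decay rate $m^{-k/d}$ is structurally important, so this obstacle is cosmetic rather than substantive.
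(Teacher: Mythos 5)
The paper does not actually prove Proposition~\ref{prop:SchwabDecay}: it is imported verbatim as ``a particular case of \cite[Prop.~2.21]{Schwab2006}.'' Your argument is, in substance, the standard proof underlying that citation: eigenvalues of a compact, self-adjoint, non-negative operator coincide with its approximation numbers, so it suffices to produce a degenerate (finite-rank) kernel close to $V$ in $L^2_{\mu\otimes\mu}$, and a rank-$(N+1)^d$ kernel with error $O(N^{-k})\,\vert V\vert_{{\bf I}\times{\bf I},\mu,k}$ gives the rate $m^{-k/d}$ after balancing $(N+1)^d\sim m$. The only real difference from \cite{Schwab2006} is the choice of approximant: they use piecewise-polynomial approximation on a quasi-uniform mesh of width $h\sim m^{-1/d}$, while you use the global spectral projector $P_N$ of Proposition~\ref{prop:SpectralConvergence} on the $2d$-dimensional product domain. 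For the product domains and product measures used throughout this paper the two are interchangeable; the mesh-based route is what makes the result hold for a general bounded Lipschitz domain, where a $\mu$-orthonormal tensor polynomial basis need not be available. Your chain of inequalities (operator norm $\leq$ Hilbert--Schmidt norm $=$ $L^2_{\mu\otimes\mu}$ kernel error, rank of $A_N$ bounded by the number of retained $x$-basis functions) is correct.

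The ``cosmetic'' obstacle you flag at the end is in fact not cosmetic for the statement as transcribed, but the defect lies in the paper, not in your proof. The bound \eqref{eq:SchwabDecay} with the bare seminorm and no multiplicative constant is false in general: take $V\equiv c>0$ on ${\bf I}\times{\bf I}$, for which $\vert V\vert_{{\bf I}\times{\bf I},\mu,k}=0$ for every $k\geq 1$ while $\lambda_1=c\,\mu({\bf I})>0$. So no proof can remove the constant (or avoid replacing the seminorm by the full $\mathcal{H}^k$ norm for small $m$), and the correct reading of the proposition is $\lambda_m\leq C(k,d)\,\vert V\vert_{{\bf I}\times{\bf I},\mu,k}\,m^{-k/d}$, which is exactly what your argument delivers. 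You are also right that only the rate matters downstream: carrying a constant $C(k,d)$ through \eqref{eq:FTT-conv-FTTerror-1}--\eqref{eq:FTT-conv-ndbound} rescales the bounds in Theorem~\ref{thm:ftt-approx-conv} but leaves the asymptotic conclusions in $r$ and the threshold $k>d-1$ unchanged. The one loose end worth tightening in a final write-up is the case of small $m$ (where your optimal $N$ is $0$ and $N^{-k}$ is meaningless); there you should fall back on $\lambda_m\leq\lambda_1\leq\Vert V\Vert_{L^2_{\mu\otimes\mu}}$ and absorb this into the constant, which again forces the full norm rather than the seminorm.
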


\begin{lemma}\label{lemma:SobolevSemiVsNorm}
  Let $f \in \mathcal{H}_\mu^k({\bf I})$, $\bar{\bf I}=I_2\times \cdots \times I_d$, and $J(x,\bar{x}) = \langle f(x,y), f(\bar{x},y) \rangle_{L^2_\mu(\bar{\bf I})} $. Then $J \in \mathcal{H}_\mu^k(I_1\times I_1)$ and
  \begin{equation}
    \label{eq:SobolevSemiVsNorm}
    \vert J \vert_{I_1\times I_1,\mu,k} \leq \Vert f \Vert^2_{\mathcal{H}_\mu^k({\bf I})} \;.
  \end{equation}
\end{lemma}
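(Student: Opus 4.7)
The plan is to express each mixed weak partial $D^{(i_1,i_2)} J$ as an integral over $\bar{\mathbf{I}}$ of a product of partial derivatives of $f$ with respect to its first coordinate, then apply Cauchy--Schwarz pointwise in $(x,\bar{x})$, integrate with Fubini, and sum over $|\mathbf{i}|=k$.

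First I would establish the identity
$$D^{(i_1,i_2)}_{(x,\bar{x})} J(x,\bar{x}) = \int_{\bar{\mathbf{I}}} \bigl(D^{(i_1,0,\ldots,0)}f\bigr)(x,y)\,\bigl(D^{(i_2,0,\ldots,0)}f\bigr)(\bar{x},y)\, d\mu_2(y_2)\cdots d\mu_d(y_d)$$
for all multi-indices with $i_1+i_2 \le k$. Both factors on the right belong to $L^2_\mu(\mathbf{I})$ since $f\in\mathcal{H}^k_\mu(\mathbf{I})$ and $i_1,i_2 \le k$. The interchange is justified by testing $J$ against an arbitrary $\varphi \in C_c^\infty(I_1\times I_1)$, using Fubini, and invoking the definition of the weak derivative of $f$ in its first argument; equivalently, by mollifying $f$, applying classical differentiation under the integral to the smooth approximation, and passing to the $L^2$ limit.

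Next, applying the Cauchy--Schwarz inequality in $L^2_{\mu_2\times\cdots\times\mu_d}(\bar{\mathbf{I}})$ pointwise in $(x,\bar{x})$ gives
$$\bigl|D^{(i_1,i_2)} J(x,\bar{x})\bigr|^{2} \le \bigl\| (D^{(i_1,0,\ldots,0)} f)(x,\cdot) \bigr\|^{2}_{L^2_\mu(\bar{\mathbf{I}})}\, \bigl\| (D^{(i_2,0,\ldots,0)} f)(\bar{x},\cdot) \bigr\|^{2}_{L^2_\mu(\bar{\mathbf{I}})}.$$
Integrating against $d\mu_1(x)\, d\mu_1(\bar{x})$ and using Fubini to separate the variables yields
$$\bigl\|D^{(i_1,i_2)} J\bigr\|^{2}_{L^2_{\mu_1\times\mu_1}(I_1\times I_1)} \le a_{i_1}\, a_{i_2}, \qquad a_j := \bigl\|D^{(j,0,\ldots,0)} f\bigr\|^{2}_{L^2_\mu(\mathbf{I})}.$$
Summing over $i_1+i_2 = k$ with $i_1,i_2 \ge 0$ and recognizing the resulting sum as a partial sum of $\bigl(\sum_{j=0}^{k} a_j\bigr)^{2}$ produces
$$|J|^{2}_{I_1\times I_1,\mu,k} \le \sum_{i_1+i_2=k} a_{i_1} a_{i_2} \le \Bigl(\sum_{j=0}^{k} a_j\Bigr)^{2} \le \Bigl(\sum_{|\mathbf{j}|\le k}\|D^{(\mathbf{j})}f\|^{2}_{L^2_\mu(\mathbf{I})}\Bigr)^{2} = \|f\|^{4}_{\mathcal{H}^k_\mu(\mathbf{I})},$$
and extracting square roots delivers the seminorm bound. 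Applying the same chain to every multi-index $|\mathbf{i}|\le k$ shows $J\in\mathcal{H}^k_\mu(I_1\times I_1)$.

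The only real obstacle is the first step: rigorously justifying the differentiation-under-the-integral formula for the weak derivative of $J$ when $f$ is merely in $\mathcal{H}^k_\mu(\mathbf{I})$. Once the smooth case is verified by elementary calculus, density of smooth functions in $\mathcal{H}^k_\mu(\mathbf{I})$ together with the continuity of the bilinear map $(g,h)\mapsto \langle g(x,\cdot), h(\bar{x},\cdot)\rangle_{L^2_\mu(\bar{\mathbf{I}})}$ with respect to the $L^2_\mu(\mathbf{I})$ topology reduces the general case to the smooth one; the remainder is two clean applications of Cauchy--Schwarz and an elementary comparison of index sets.
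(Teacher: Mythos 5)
Your proposal is correct and follows essentially the same route as the paper's proof: pass the weak derivative inside the $L^2_\mu(\bar{\bf I})$ inner product to get $D^{(i_1,i_2)}J = \langle D^{(i_1,\mathbf{0})}f, D^{(i_2,\mathbf{0})}f\rangle_{L^2_\mu(\bar{\bf I})}$, apply Cauchy--Schwarz, and sum to obtain the bound by $\Vert f\Vert^4_{\mathcal{H}^k_\mu({\bf I})}$. The only difference is cosmetic: you bound the seminorm directly by summing over $|\mathbf{i}|=k$ and give a more explicit justification of the differentiation-under-the-integral step, whereas the paper first bounds the seminorm by the full $\mathcal{H}^k$ norm and asserts the interchange without comment.
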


\begin{lemma}\label{lemma:SobolevPhiVsF}
  \rvnote*{\#1-1}{Let the function $\left( \sqrt{\lambda_i} \varphi_i \right) (\alpha_i;\,\cdot\,) \in \mathcal{H}_\mu^k(\tilde{\bf I})$ be H\"older continuous with exponent $\alpha > 1/2$, where $\tilde{\bf I}=I_{i+1}\times \cdots \times I_d$  is closed and bounded and $\bar{\bf I}=I_{i+2}\times \cdots \times I_d$.} Let
  \begin{equation}
    \label{eq:SobolevPhiVsFDef}
    \left( \sqrt{\lambda_i} \varphi_i \right)_{TT}(\alpha_i;\,\cdot\,)= \sum_{\alpha_{i+1}=1}^{r_{i+1}} \sqrt{\lambda_{i+1}(\alpha_{i+1})} \gamma_{i+1}(\alpha_i;x_{i+1};\alpha_{i+1}) \varphi_{i+1}(\alpha_{i+1};x_{i+2},\ldots,x_d)
  \end{equation}
  be the truncated Schmidt decomposition of $\left( \sqrt{\lambda_i} \varphi_i \right)(\alpha_i;\,\cdot\,)$. \rvnote*{\#1-1}{Then
  \begin{equation}
    \label{eq:SobolevPhiVsF}
    \sum_{\alpha_i=1}^{r_i} \left\Vert \left( \sqrt{\lambda_i} \varphi_i \right)(\alpha_i) \right\Vert^2_{\mathcal{H}_\mu^k(\tilde{\bf I})} \leq \left\Vert f \right\Vert^2_{\mathcal{H}_\mu^k({\bf I})} \;.
  \end{equation}}
\end{lemma}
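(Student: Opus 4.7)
The plan is to derive an integral formula that expresses $\sqrt{\lambda_{i+1}(\alpha_{i+1})}\varphi_{i+1}(\alpha_{i+1};\,\cdot\,)$ as an $L^2_{\mu_{i+1}}$-inner product of $\sqrt{\lambda_i(\alpha_i)}\varphi_i(\alpha_i;\,\cdot\,)$ against the core $\gamma_{i+1}(\alpha_i,\,\cdot\,,\alpha_{i+1})$, valid for each fixed $\alpha_i$; then combine it with Cauchy--Schwarz and the per-slice orthonormality of the cores to get the first inequality; and finally close the second inequality by induction on $i$. For the integral formula, I would fix $\alpha_i\in\{1,\ldots,r_i\}$, test \eqref{eq:SobolevPhiVsFDef} against $\gamma_{i+1}(\alpha_i,\,\cdot\,,\alpha_{i+1})$ in $L^2_{\mu_{i+1}}(I_{i+1})$, and use the orthonormality $\langle\gamma_{i+1}(\alpha_i,\,\cdot\,,m),\gamma_{i+1}(\alpha_i,\,\cdot\,,n)\rangle_{L^2_{\mu_{i+1}}}=\delta_{mn}$ that is built into the definition of the FTT approximation, to obtain
\begin{equation*}
\sqrt{\lambda_{i+1}(\alpha_{i+1})}\,\varphi_{i+1}(\alpha_{i+1};\bar{x}) = \int_{I_{i+1}} \sqrt{\lambda_i(\alpha_i)}\,\varphi_i(\alpha_i;x_{i+1},\bar{x})\,\gamma_{i+1}(\alpha_i,x_{i+1},\alpha_{i+1})\,d\mu_{i+1}(x_{i+1}),
\end{equation*}
where $\bar{x}=(x_{i+2},\ldots,x_d)$.

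For the first inequality, the hypothesis $\sqrt{\lambda_i}\varphi_i(\alpha_i,\,\cdot\,)\in\mathcal{H}^k_\mu(\tilde{\mathbf{I}})$ allows me to commute any weak derivative $D^{\mathbf{j}}_{\bar{x}}$ with $|\mathbf{j}|\leq k$ with the $x_{i+1}$-integration. Cauchy--Schwarz in $L^2_{\mu_{i+1}}$, together with $\|\gamma_{i+1}(\alpha_i,\,\cdot\,,\alpha_{i+1})\|_{L^2_{\mu_{i+1}}}=1$, then yields the pointwise bound
\begin{equation*}
\lambda_{i+1}(\alpha_{i+1})\,|D^{\mathbf{j}}_{\bar{x}}\varphi_{i+1}(\alpha_{i+1};\bar{x})|^2 \leq \int_{I_{i+1}} \bigl|D^{\mathbf{j}}_{\bar{x}}\sqrt{\lambda_i(\alpha_i)}\varphi_i(\alpha_i;x_{i+1},\bar{x})\bigr|^2\,d\mu_{i+1}(x_{i+1}).
\end{equation*}
Integrating over $\bar{x}$ and summing over all multi-indices in $\bar{x}$ with $|\mathbf{j}|\leq k$ makes the right-hand side a sum over multi-indices of order $\leq k$ that differentiate only in $\bar{x}$; this is a subset of the multi-indices entering $\|\sqrt{\lambda_i}\varphi_i(\alpha_i)\|^2_{\mathcal{H}^k_\mu(\tilde{\mathbf{I}})}$ (which in addition allows differentiation in $x_{i+1}$). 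Dividing through by $\lambda_{i+1}(\alpha_{i+1})$ gives the first inequality of the lemma, and it holds for any fixed $\alpha_i\in\{1,\ldots,r_i\}$.

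The second inequality then follows by induction on $i$: the base case $i=0$ is immediate from the convention $\sqrt{\lambda_0}\varphi_0:=f$, and the inductive step multiplies the first inequality at level $i-1\to i$ by $\lambda_i(\alpha_i)$ to obtain $\|\sqrt{\lambda_i}\varphi_i(\alpha_i)\|^2_{\mathcal{H}^k_\mu(\tilde{\mathbf{I}})} \leq \|\sqrt{\lambda_{i-1}}\varphi_{i-1}(\alpha_{i-1})\|^2 \leq \|f\|^2_{\mathcal{H}^k_\mu(\mathbf{I})}$, using the inductive hypothesis at any $\alpha_{i-1}$. The main technical obstacle is rigorously justifying the exchange of weak $\bar{x}$-derivatives with the $\mu_{i+1}$-integration, which is standard but requires either a mollification argument or a direct verification via test functions; once this is in place, the key conceptual point is that the per-slice orthonormality of the FTT cores is exactly what lets the bound involve a single $\alpha_i$ rather than a sum over $\alpha_i$.
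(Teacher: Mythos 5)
Your proposal is correct and follows essentially the same route as the paper's proof: the identity expressing $D^{\mathbf{j}}\varphi_{i+1}$ as the inner product of $D^{(0,\mathbf{j})}(\sqrt{\lambda_i}\varphi_i)$ with the normalized core $\gamma_{i+1}$, Cauchy--Schwarz, the observation that the multi-indices differentiating only in $\bar{x}$ form a subset of those in the full Sobolev norm on $\tilde{\mathbf{I}}$, and a recursion down to $f$ for the second inequality. You merely make explicit the integral formula and the derivative--integral exchange that the paper invokes implicitly via its earlier regularity theorem.
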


\smallskip

For the sake of simplicity, in the following analysis we will let the ranks be ${\bf r}=(r,\ldots,r)$. Our main result, relating the regularity of $f$, the ranks $r$, and the input dimension $d$ to the error of the FTT approximation, is as follows.

\smallskip

\rvnote*{\#1-1}{
\begin{theorem}[Convergence of the FTT approximation]\label{thm:ftt-approx-conv}
  Let $f \in \mathcal{H}_\mu^k({\bf I})$ be a H\"older continuous function with exponent $\alpha > 1/2$ defined on the closed and bounded domain ${\bf I}$. Then
  \begin{equation}
    \label{eq:EigenvaluesDecay}
    \left\Vert R_{TT} \right\Vert^2_{L_\mu^2} \leq \Vert f \Vert^2_{\mathcal{H}_\mu^k({\bf I})} (d-1) \zeta(k,r+1) \qquad \text{for} \  r\geq 1 \;,
  \end{equation}
  where $\zeta$ is the Hurwitz zeta function. Furthermore
  \begin{equation}
    \label{eq:EigenvaluesDecay-conv}
    \lim_{r \rightarrow \infty} \left\Vert R_{TT} \right\Vert^2_{L_\mu^2} = 0 \qquad \text{for} \; k > 1 \;.
  \end{equation}
\end{theorem}}
\begin{proof}
  We start by considering the case ${\bf I} = I_1\times I_2 \times I_3$. Define the following approximations of $f$, using the Schmidt decomposition \eqref{eq:functional-SVD}:
  \begin{align}
    \label{eq:FTT-conv-FTTapprox}
    f_{TT,1} &= \sum_{\alpha_1=1}^{r_1} \sqrt{\lambda_1(\alpha_1)} \gamma_1(x_1;\alpha_1) \varphi_1(\alpha_1;x_2,x_3) \;, \\
    f_{TT} &= \sum_{\alpha_1=1}^{r_1} \gamma_1(x_1;\alpha_1) \left(\sqrt{\lambda_1}\varphi_1\right)_{TT}(\alpha_1;x_2,x_3) \;,
  \end{align}
  where
  \begin{equation}
    \label{eq:FTT-conv-FTTapprox-1}
    \left(\sqrt{\lambda_1}\varphi_1\right)_{TT}(\alpha_1;x_2,x_3) = \sum_{\alpha_2=1}^{r_2} \sqrt{\lambda(\alpha_2)} \gamma_2(\alpha_1;x_2;\alpha_2) \varphi_2(\alpha_2;x_3) \;.
  \end{equation}
 As in \eqref{eq:ErrorBoundFTT-Ortho}, $\left\langle f-f_{TT,1},f_{TT,1}-f_{TT}\right\rangle_{L^2_\mu({\bf I})}=0$ and hence
  \begin{equation}
    \label{eq:FTT-conv-FTTerror}
    \Vert R_{TT} \Vert^2_{L^2_\mu({\bf I})} = \Vert f - f_{TT} \Vert^2_{L^2_\mu({\bf I})} = \Vert f - f_{TT,1} \Vert^2_{L^2_\mu({\bf I})} + \Vert f_{TT,1} - f_{TT} \Vert^2_{L^2_\mu({\bf I})} \; .
  \end{equation}
  Exploiting the orthogonality of the singular functions, Proposition \ref{prop:SchwabDecay}, and Lemma \ref{lemma:SobolevSemiVsNorm}, we have
  \begin{equation}
    \label{eq:FTT-conv-FTTerror-1}
    \Vert f - f_{TT,1} \Vert^2_{L^2_\mu({\bf I})} = \sum_{\alpha_1=r_1+1}^\infty \lambda(\alpha_1) \leq \sum_{\alpha_1=r_1+1}^\infty \alpha_1^{-k} \vert J_0 \vert_k \leq \Vert f \Vert^2_{\mathcal{H}_\mu^k({\bf I})} \zeta(k, r_1 +1) \; ,
  \end{equation}
  where $J_0(x_1,\bar{x}_1) = \langle f(x_1,x_2,x_3), f(\bar{x}_1,x_2,x_3) \rangle_{L^2_\mu(I_2 \times I_3)}$. Similarly:
  \begin{equation}
    \label{eq:FTT-conv-FTTerror-2}
    \begin{aligned}
      \left\Vert \left( \sqrt{\lambda_1}\varphi_1 \right)(\alpha_1) - \left( \sqrt{\lambda_1}\varphi_1 \right)_{TT}(\alpha_1) \right\Vert^2_{L^2_\mu(I_2\times I_3)} &\leq \sum_{\alpha_2=r_2+1}^\infty \alpha_2^{-k} \vert J_1(\alpha_1) \vert_k \\
      &\leq \left\Vert \left( \sqrt{\lambda_1}\varphi_1 \right)(\alpha_1) \right\Vert^2_{\mathcal{H}_\mu^k(I_2 \times I_3)} \zeta(k, r_2 +1) \; ,
    \end{aligned}
  \end{equation}
  where $J_1(\alpha_1;x_2,\bar{x}_2) = \left\langle \left( \sqrt{\lambda_1}\varphi_1 \right)(\alpha_1;x_2,x_3), \left( \sqrt{\lambda_1}\varphi_1 \right)(\alpha_1;\bar{x}_2,x_3) \right\rangle_{L^2_\mu(I_3)}$. With the help of Lemma \ref{lemma:SobolevPhiVsF}, this leads to
  \rvnote*{\#1-1}{
  \begin{equation}
    \label{eq:FTT-conv-FTTerror-3}
    \begin{aligned}
      \Vert f_{TT,1} - f_{TT} \Vert^2_{L^2_\mu({\bf I})} &= \left\Vert \sum_{\alpha_1=1}^{r_1} \gamma_1  (\, \cdot \, ; \alpha_1) \left( \left( \sqrt{\lambda_1}\varphi_1 \right)  (\alpha_1;\,\cdot\,) - \left( \sqrt{\lambda_1}\varphi_1 \right)_{TT} (\alpha_1;\,\cdot\,) \right) \right\Vert^2_{L^2_\mu({\bf I})} \\
      &= \sum_{\alpha_1=1}^{r_1} \Vert \gamma_1  (\, \cdot \, ; \alpha_1) \Vert^2_{L^2_{\mu}(I_1)} \left\Vert \left( \sqrt{\lambda_1}\varphi_1 \right)  (\alpha_1;\,\cdot\,) - \left( \sqrt{\lambda_1}\varphi_1 \right)_{TT} (\alpha_1;\,\cdot\,) \right\Vert^2_{L^2_\mu(I_2\times I_3)} \\
      &\leq \sum_{\alpha_1=1}^{r_1} \left\Vert \left( \sqrt{\lambda_1}\varphi_1 \right)  (\alpha_1;\,\cdot\,) \right\Vert^2_{\mathcal{H}_\mu^k(I_2\times I_3)} \zeta(k, r_2 +1) \leq \Vert f \Vert^2_{\mathcal{H}_\mu^k({\bf I})} \zeta(k, r_2 +1) \;.
    \end{aligned}
  \end{equation}}
  Thus we obtain the bound
  \rvnote*{\#1-1}{
  \begin{equation}
    \label{eq:FTT-conv-3dbound}
    \Vert R_{TT} \Vert^2_{L^2_\mu({\bf I})} \leq \Vert f \Vert^2_{\mathcal{H}_\mu^k({\bf I})} \left[ \zeta(k, r_1 +1) + \zeta(k, r_2 +1)\right] \;.
  \end{equation}}
  Now let ${\bf I}=I_1\times \cdots \times I_d$ and ${\bf r}=(r,\ldots,r)$, for $r\geq 1$. Then
  \rvnote*{\#1-1}{
  \begin{equation}
    \label{eq:FTT-conv-ndbound}
    \begin{aligned}
      \Vert R_{TT} \Vert^2_{L^2_\mu({\bf I})} &\leq \Vert f \Vert^2_{\mathcal{H}_\mu^k({\bf I})} \sum_{i=1}^{d-1} \zeta(k, r_i +1) = \Vert f \Vert^2_{\mathcal{H}_\mu^k({\bf I})} (d-1) \zeta(k, r+1) \;.
    \end{aligned}
  \end{equation}}

  Let us now study the asymptotic behavior of $\left\Vert R_{TT} \right\Vert^2_{L_\mu^2}$ as $r\rightarrow \infty$. For $k>1$, we can use the bound:
  \begin{equation}
    \label{eq:FTT-conv-asymptotic-1}
    \zeta(k, r+1) = \sum_{i=r+1}^\infty i^{-k} \leq \int_{r+1}^\infty i^{-k} di = \frac{(r+1)^{-(k-1)}}{(k-1)} \;.
  \end{equation}
  Plugging this into \eqref{eq:FTT-conv-ndbound}, we obtain:
  \rvnote*{\#1-1}{
  \begin{equation}
    \label{eq:FTT-conv-asymptotic-2}
    \begin{aligned}
      \Vert R_{TT} \Vert^2_{L^2_\mu({\bf I})} &\leq \Vert f \Vert^2_{\mathcal{H}_\mu^k({\bf I})} (d-1) \frac{(r+1)^{-(k-1)}}{(k-1)} \;.
    \end{aligned}
  \end{equation}
  This leads to the asymptotic estimate \eqref{eq:EigenvaluesDecay-conv}, completing the proof.}
\hfill\end{proof}

\subsection{Regularity of the FTT decomposition}\label{sec:regul-ftt-decomp}
To construct polynomial approximations of the functional tensor-train decomposition, we would like this decomposition to retain the same regularity as the original function. In particular, in the scope of the polynomial approximation theory presented in Section \ref{sec:approximation-theory}, we need boundedness of the weak derivatives used to define the Sobolev spaces \eqref{eq:SobolevSpace}. With this perspective, we will require absolute convergence almost everywhere of the FTT decomposition. Smithies \cite[Thm.~14]{Smithies1937} proved that a kind of integrated H\"older continuity with exponent $\alpha > 1/2$ is a sufficient condition for the absolute convergence almost everywhere (a.e.) of the Schmidt decomposition. The condition required by Smithies is a generalization of H\"older continuity a.e.\ \cite{TOWNSEND2013}, as we show in Appendix \ref{sec:hold-cont-smith}. The Smithies result can be extended by construction to the FTT decomposition:

\begin{corollary}[Absolute convergence almost everywhere]
  Let $I_1 \times \cdots \times I_d = \mathbf{I} \subset \mathbb{R}^d$ be closed and bounded, and $f \in L_\mu^2({\bf I})$ be a H\"older continuous function with exponent $\alpha > 1/2$. Then the FTT decomposition \eqref{eq:ContTTDecomposition} converges absolutely almost everywhere.
\end{corollary}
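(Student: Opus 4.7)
The plan is to induct on the dimension $d$, invoking Smithies' theorem \cite[Thm.~14]{Smithies1937} at each step of the recursive construction that produced the FTT decomposition. For $d=2$ the FTT coincides with the functional SVD, and the appendix (Section~\ref{sec:hold-cont-smith}) shows that Hölder continuity with exponent $\alpha>1/2$ implies Smithies' integrated Hölder condition; hence absolute a.e.\ convergence in the base case follows directly from \cite[Thm.~14]{Smithies1937}.

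For the inductive step, suppose the claim holds in dimension $d-1$. Apply the first functional SVD \eqref{eq:FirstStepContDecomp} to $f$. Smithies' theorem then yields absolute a.e.\ convergence of the outer series in $\alpha_1$. To invoke the inductive hypothesis on each inner kernel $(\sqrt{\lambda_1}\varphi_1)(\alpha_1;\,\cdot\,)$ defined on $I_2 \times \cdots \times I_d$, I would verify that these kernels inherit the same Hölder exponent $\alpha$. This uses the singular-function identity
\begin{equation*}
  \sqrt{\lambda_1(\alpha_1)}\,\varphi_1(\alpha_1; y) \;=\; \int_{I_1} \gamma_1(x;\alpha_1)\, f(x,y)\, d\mu_1(x),
\end{equation*}
combined with the Cauchy--Schwarz inequality and $\|\gamma_1(\,\cdot\,;\alpha_1)\|_{L^2_{\mu_1}}=1$, which gives
\begin{equation*}
  \bigl|\sqrt{\lambda_1(\alpha_1)}\bigl(\varphi_1(\alpha_1;y)-\varphi_1(\alpha_1;\bar{y})\bigr)\bigr| \;\leq\; \|f(\,\cdot\,,y)-f(\,\cdot\,,\bar{y})\|_{L^2_{\mu_1}} \;\leq\; C\,|y-\bar{y}|^\alpha,
\end{equation*}
where the second inequality follows from the Hölder continuity of $f$ together with the finiteness of $\mu_1(I_1)$.

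With Hölder regularity of the inner kernels in hand, the inductive hypothesis delivers absolute a.e.\ convergence of the FTT expansion of each $(\sqrt{\lambda_1}\varphi_1)(\alpha_1;\,\cdot\,)$. To combine the two levels, note that every term in the expansion \eqref{eq:ContTTDecomposition} is taken in absolute value, so by Tonelli's theorem absolute convergence of the full multi-index series is equivalent to finiteness of the iterated sum. The exceptional set is the countable union of the null sets arising at each of the $d-1$ levels, which remains null; hence absolute a.e.\ convergence of \eqref{eq:ContTTDecomposition} follows.

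The main obstacle is making the regularity-transfer argument watertight across all levels of the recursion: propagating Hölder continuity with the \emph{same} exponent $\alpha>1/2$ through each application of the functional SVD, and simultaneously bookkeeping the null sets generated at each of the countably many inner expansions so that the combined exceptional set remains of measure zero.
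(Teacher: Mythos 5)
The paper itself offers no written proof of this corollary beyond the sentence preceding it (``The Smithies result can be extended by construction to the FTT decomposition''), so your attempt to supply the missing argument is worthwhile, and your regularity-transfer computation --- using $\sqrt{\lambda_1(\alpha_1)}\,\varphi_1(\alpha_1;y)=\int_{I_1}\gamma_1(x;\alpha_1)f(x,y)\,d\mu_1(x)$, Cauchy--Schwarz, and $\Vert\gamma_1(\cdot;\alpha_1)\Vert_{L^2_{\mu_1}}=1$ to show each slice $(\sqrt{\lambda_1}\varphi_1)(\alpha_1;\cdot)$ is H\"older with the same exponent $\alpha$ --- is correct and is exactly the kind of verification the paper glosses over.

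However, your inductive step decomposes the wrong object. The FTT construction does \emph{not} apply a functional SVD to each slice $(\sqrt{\lambda_1}\varphi_1)(\alpha_1;\cdot)$ on $I_2\times\cdots\times I_d$ separately; per \eqref{eq:SVDSecondSubStep}, it applies a \emph{single} functional SVD to the stacked kernel on $(\mathbb{N}\times I_2)\times(I_3\times\cdots\times I_d)$, with the counting measure $\tau$ on the discrete index. This joint SVD produces left singular functions $\gamma_2(\alpha_1;x_2;\alpha_2)$ orthonormal over $\mathbb{N}\times I_2$ and a common family $\varphi_2(\alpha_2;\cdot)$ shared across all $\alpha_1$; slice-by-slice SVDs would instead produce $\alpha_1$-dependent right singular functions and a different expansion. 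So your induction establishes absolute a.e.\ convergence of a hierarchical decomposition that is not \eqref{eq:ContTTDecomposition}. The repair is to verify Smithies' \emph{integrated} H\"older condition (Definition~\ref{def:integratedholder}, with $p=2$) directly for the stacked kernel at each level: by orthonormality of $\{\gamma_1(\cdot;\alpha_1)\}$ one has
\begin{equation*}
  \sum_{\alpha_1=1}^{\infty}\lambda_1(\alpha_1)\bigl\vert\varphi_1(\alpha_1;y)-\varphi_1(\alpha_1;\bar{y})\bigr\vert^2
  \;=\;\bigl\Vert f(\cdot,y)-f(\cdot,\bar{y})\bigr\Vert^2_{L^2_{\mu_1}(I_1)}
  \;\leq\; C^2\,\mu_1(I_1)\,\vert y-\bar{y}\vert^{2\alpha},
\end{equation*}
which is precisely the summed-over-$\alpha_1$ quantity appearing in \eqref{eq:int-Holder} for the stacked kernel; your slice-wise bound, uniform in $\alpha_1$ but applied to infinitely many slices, does not by itself give this summability. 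With that substitution the recursion goes through as you describe, and your bookkeeping of the countably many null sets is fine.
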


\smallskip

Now we can prove that if $f$ belongs to a certain Sobolev space, then the cores of the FTT decomposition will also belong to the same Sobolev space.

\begin{theorem}[Sobolev regularity of FTT cores]\label{thm:ftt-sobolev}
Let $I_1 \times \cdots \times I_d = \mathbf{I} \subset \mathbb{R}^d$ be closed and bounded, and let $f \in L_\mu^2({\bf I})$ be a H\"older continuous function with exponent $\alpha > 1/2$ such that $f\in \mathcal{H}^k_\mu({\bf I})$. 
Then the FTT decomposition \eqref{eq:ContTTDecomposition} is such that 
$\gamma_j(\alpha_{j-1},\cdot,\alpha_j) \in \mathcal{H}^{k}_{\mu_j}(I_j) $ for all $j$, $\alpha_{j-1}$, and $\alpha_j$.
\end{theorem}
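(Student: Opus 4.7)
My strategy is to exploit the fact that, at each level $j$ of the FTT recursion, the core $\gamma_j(\alpha_{j-1};\cdot;\alpha_j)$ is the image of the right singular function $\varphi_j$ under the integral operator induced by $\sqrt{\lambda_{j-1}}\varphi_{j-1}$, together with the fact (already proved) that this inducing kernel inherits the Sobolev regularity of $f$. Regularity in $x_j$ is then transferred from the kernel to the core by differentiating under the integral sign. The Hölder hypothesis plays the role of guaranteeing absolute a.e.\ convergence of the FTT decomposition, so that the individual cores are well-defined pointwise objects rather than mere equivalence classes.

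\textbf{Step 1 (representation of $\gamma_j$).} From the functional SVD applied at level $j$, the relation $T\varphi_j = \sqrt{\lambda_j}\,\gamma_j$ for the compact operator $T$ with kernel $\bigl(\sqrt{\lambda_{j-1}}\varphi_{j-1}\bigr)(\alpha_{j-1};x_j,y)$ yields, for every $\alpha_j$ with $\lambda_j(\alpha_j)>0$,
\begin{equation*}
\gamma_j(\alpha_{j-1};x_j;\alpha_j) \;=\; \frac{1}{\sqrt{\lambda_j(\alpha_j)}} \int_{I_{j+1}\times\cdots\times I_d} \bigl(\sqrt{\lambda_{j-1}}\varphi_{j-1}\bigr)(\alpha_{j-1};x_j,y)\,\varphi_j(\alpha_j;y)\,d\mu(y),
\end{equation*}
with the convention $\bigl(\sqrt{\lambda_0}\varphi_0\bigr):=f$ for the base level $j=1$. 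The final core $\gamma_d(\alpha_{d-1};x_d;1)=\sqrt{\lambda_{d-1}(\alpha_{d-1})}\,\varphi_d(\alpha_{d-1};x_d)$ is already a multiple of a $\varphi$-factor and will be handled in Step~3.

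\textbf{Step 2 (regularity of the kernel in $x_j$).} I would show by induction on $j$ that $\bigl(\sqrt{\lambda_{j-1}}\varphi_{j-1}\bigr)(\alpha_{j-1};\cdot)\in\mathcal{H}^k_\mu(I_j\times\cdots\times I_d)$ for each $\alpha_{j-1}$. The base case $j=1$ is the hypothesis $f\in\mathcal{H}^k_\mu(\mathbf{I})$. For the induction step, Lemma~\ref{lemma:SobolevPhiVsF} directly gives $\Vert\varphi_i(\alpha_i)\Vert^2_{\mathcal{H}^k_\mu}\leq \Vert f\Vert^2_{\mathcal{H}^k_\mu}/\lambda_i(\alpha_i)$, from which the $\mathcal{H}^k_\mu$ membership of $\sqrt{\lambda_i}\varphi_i$ follows trivially by a scalar multiplication. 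In particular, the weak partial derivatives $D^{(m)}_{x_j}\bigl(\sqrt{\lambda_{j-1}}\varphi_{j-1}\bigr)$ exist in $L^2_\mu$ for every $m\leq k$.

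\textbf{Step 3 (transferring regularity to the core, and the $j=d$ case).} The standard Fubini--duality argument with test functions in $C^\infty_c(I_j)$ legitimates exchanging the weak derivative $D^{(m)}_{x_j}$ with the $y$-integral in the representation of Step~1, giving
\begin{equation*}
D^{(m)}_{x_j}\gamma_j(\alpha_{j-1};x_j;\alpha_j) \;=\; \frac{1}{\sqrt{\lambda_j(\alpha_j)}} \int \bigl[D^{(m)}_{x_j}(\sqrt{\lambda_{j-1}}\varphi_{j-1})\bigr](\alpha_{j-1};x_j,y)\,\varphi_j(\alpha_j;y)\,d\mu(y).
\end{equation*}
Cauchy--Schwarz, together with $\Vert\varphi_j(\alpha_j;\cdot)\Vert_{L^2_\mu}=1$, then yields
\begin{equation*}
\Vert D^{(m)}_{x_j}\gamma_j(\alpha_{j-1};\cdot;\alpha_j)\Vert^2_{L^2_{\mu_j}(I_j)} \;\leq\; \frac{1}{\lambda_j(\alpha_j)}\,\bigl\Vert D^{(m)}_{x_j}\bigl(\sqrt{\lambda_{j-1}}\varphi_{j-1}\bigr)(\alpha_{j-1};\cdot)\bigr\Vert^2_{L^2_\mu} \;<\;\infty,
\end{equation*}
and summation over $m\leq k$ gives $\gamma_j(\alpha_{j-1};\cdot;\alpha_j)\in\mathcal{H}^k_{\mu_j}(I_j)$. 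For $j=d$ the core is a scalar multiple of a $\varphi$-factor, so Step~2 directly provides $\gamma_d(\alpha_{d-1};\cdot;1)\in\mathcal{H}^k_{\mu_d}(I_d)$.

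\textbf{Main obstacle.} The only nontrivial step is rigorously exchanging the weak derivative with the integral operator (Step~3); this is where I would need to be careful, invoking a test-function argument and the $L^2$ integrability guaranteed by Step~2. Everything else is a clean bookkeeping of Lemma~\ref{lemma:SobolevPhiVsF} along the levels of the recursion.
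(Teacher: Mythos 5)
Your proposal is correct and shares the paper's skeleton --- the decisive identity in both arguments is $D^{m}\gamma_j = \lambda_j^{-1/2}\,\langle D^{(m,\mathbf{0})}(\sqrt{\lambda_{j-1}}\varphi_{j-1}),\varphi_j\rangle$ followed by Cauchy--Schwarz and orthonormality, propagated through the levels of the recursion. Where you genuinely diverge is in how that identity is justified. The paper first proves term-by-term weak differentiability of the entire SVD series, $D^{\mathbf{i}}f=\sum_j\sqrt{\lambda_j}\,(D^{i_1}\psi_j\otimes D^{i_2}\phi_j)$, by invoking Smithies' absolute a.e.\ convergence (this is exactly where the H\"older hypothesis with $\alpha>1/2$ enters, as the dominating function for the dominated convergence theorem), and only then pairs with $\phi_j$ to isolate $D^{i_1}\psi_j$. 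You instead differentiate under the single integral $T\varphi_j=\sqrt{\lambda_j}\,\gamma_j$ via a Fubini/test-function argument, never touching the infinite series. Your route is arguably more economical: it requires only $f\in\mathcal{H}^k_\mu$ and does not actually use the H\"older hypothesis anywhere (your stated reason for it --- making the cores ``pointwise objects'' --- is not where the paper needs it, and your $L^2$-level argument never needs pointwise values). The price is that the burden shifts entirely onto the step you flag as the main obstacle: you must verify that for a.e.\ $y$ the slice $K(\cdot,y)$ of a kernel $K\in\mathcal{H}^k_\mu(I_j\times Y)$ has weak derivative equal to the corresponding slice of $D^{(m,\mathbf{0})}K$ (a countable-dense-set-of-test-functions argument), before Fubini lets you pull $D^{(m)}_{x_j}$ inside the $y$-integral. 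That verification is standard but should be written out; with it in place your Steps 1--3, together with the induction through Lemma~\ref{lemma:SobolevPhiVsF} and the separate treatment of the terminal core $\gamma_d$, constitute a complete proof.
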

\begin{proof}
  We will first show this property for the Schmidt decomposition \eqref{eq:functional-SVD} of the H\"older ($\alpha>1/2$) continuous function $f\in \mathcal{H}^k_\mu(X\times Y)$. First we want to show that
  \begin{equation}\label{eq:WeakDerivativeDominatedConv}
    D^{\bf i}f = \sum_{j=1}^\infty \sqrt{\lambda_j} (D^{i_1} \psi_j \otimes D^{i_2}\phi_j) \;,
  \end{equation}
  where ${\bf i} = (i_1,i_2)$. Since $f$ is H\"older ($\alpha>1/2$) continuous, \eqref{eq:functional-SVD} converges absolutely a.e.\ by Smithies \cite{Smithies1937}; then we can define
  \begin{equation}
    \infty > g := \sum_{j=1}^\infty \left\vert \sqrt{\lambda_j} (\psi_j \otimes \phi_j) \right\vert \geq \left\vert \sum_{j=1}^\infty \sqrt{\lambda_j} (\psi_j \otimes \phi_j) \right\vert  \;,
  \end{equation}
  where the domination holds almost everywhere. 
  Letting $\mathcal{C}^\infty_c(X\times Y)$ be the set of infinitely differentiable functions with compact support, by the definition of the weak derivative, for all $v \in \mathcal{C}^\infty_c(X\times Y)$,
  \begin{equation}
    (-1)^{\vert {\bf i} \vert} \int_{X \times Y} D^{\bf i} f v d\mu = \int_{X \times Y} f v^{({\bf i})} d\mu \;.
  \end{equation}
Therefore this property also holds for any $v = v_x \otimes v_y \in \mathcal{C}^\infty_c(X) \otimes \mathcal{C}^\infty_c(X)$. Using the dominated convergence theorem, we obtain:
  \begin{equation}\nonumber
    \begin{aligned}
      (-1)^{\vert {\bf i} \vert} &\int_{X \times Y} D^{\bf i} f v d\mu = \int_{X \times Y} f v^{({\bf i})} d\mu = \int_{X \times Y} \left( \sum_{j=1}^\infty \sqrt{\lambda_j} (\psi_j \otimes \phi_j) \right) v^{({\bf i})} d\mu \\
      &= \sum_{j=1}^\infty \sqrt{\lambda_j} \int_{X \times Y} (\psi_j \otimes \phi_j) v^{({\bf i})} d\mu = \sum_{j=1}^\infty \sqrt{\lambda_j} \int_{X \times Y} \left(\psi_jv_x^{(i_1)}\right) \otimes \left(\phi_jv_y^{(i_2)}\right) d\mu \\
      &= \sum_{j=1}^\infty \sqrt{\lambda_j} \left( (-1)^{i_1} \int_{X} D^{i_1}\psi_j v_x d\mu_x \right) \left( (-1)^{i_2} \int_{Y} D^{i_2}\phi_j v_y d\mu_y \right) \;.
    \end{aligned}
  \end{equation}
  Thus \eqref{eq:WeakDerivativeDominatedConv} holds. Next we want to show that $ f \in \mathcal{H}^k_\mu(X \times Y)$ implies $ \Vert D^{i_1} \psi_j \Vert_{L^2_\mu(X)} < \infty$ and $ \Vert D^{i_2} \phi_j \Vert_{L^2_\mu(Y)} < \infty$ for $i_1,i_2 \leq k$. Thanks to \eqref{eq:WeakDerivativeDominatedConv} and due to the orthonormality of $\{\phi_j \}_{j=1}^\infty$, we have that
  \begin{equation}
    D^{i_1} \psi_j = \frac{1}{\sqrt{\lambda_j}} \left\langle D^{(i_1,0)}f, \phi_j \right\rangle_{L^2_\mu(Y)} \;.
  \end{equation}
  Using the Cauchy-Schwarz inequality:
  \begin{equation}
    \begin{aligned}
      \left\Vert D^{i_1} \psi_j \right\Vert^2_{L^2_\mu(X)} &= \left\Vert \frac{1}{\sqrt{\lambda_j}} \left\langle D^{(i_1,0)}f, \phi_j \right\rangle_{L^2_\mu(Y)} \right\Vert^2_{L^2_\mu(X)} \\
      &\leq \left\vert \frac{1}{\lambda_j} \right\vert \left\Vert \phi_j \right\Vert^2_{L^2_\mu(Y)} \left\Vert D^{(i_1,0)} f \right\Vert^2_{L^2_\mu(X\times Y)} < \infty \;,
    \end{aligned}
  \end{equation}
  where the last bound is due to the fact that $\{\phi_j \}_{j=1}^\infty \subset L^2_\mu(Y)$ (see \eqref{eq:IntegralOperatorBase} and \eqref{eq:FirstStepContDecomp}) and $ D^{(i_1,0)} f \in L^2_\mu(X\times Y)$ because $i_1 \leq k$ and $f\in \mathcal{H}^k_\mu(X\times Y)$. In the same way, $\left\Vert D^{i_2} \phi_j \right\Vert_{L^2_\mu(Y)} < \infty$ for all $i_2 \leq k$. It follows that $\{\psi_j\}_{j=1}^\infty \subset \mathcal{H}^k_\mu(X)$ and $\{\phi_j\}_{j=1}^\infty \subset \mathcal{H}^k_\mu(Y)$.

The extension to the FTT decomposition \eqref{eq:ContTTDecomposition} follows by induction. Letting $X=I_1$ and $Y=I_2\times \cdots \times I_d$, we have $\{\gamma(\cdot;\alpha_1)\}_{\alpha_1=1}^\infty \subset \mathcal{H}^k_\mu(I_1)$ and $\{\varphi_1(\alpha_1;\cdot)\}_{\alpha_1=1}^\infty \subset \mathcal{H}^k_\mu(I_2\times\cdots\times I_d)$. We can then apply the same argument to the Schmidt decomposition of $\{\varphi_1(\alpha_1;\cdot)\}_{\alpha_1=1}^\infty$ and to every other set $\{\varphi_i(\alpha_{i-1};\cdot;\alpha_i)\}_{\alpha_i=1}^\infty$ obtained during the recursive construction of the FTT decomposition.
\hfill \end{proof}

\medskip

\begin{remark}{\rm
The results above have the limitation of holding for functions defined on closed and bounded domains. In many practical cases, however, functions are defined on the real line, equipped with a finite measure. To the authors' knowledge, the corresponding result for such cases has not been established in the literature. The result by Smithies \cite[Thm.~14]{Smithies1937} hinges on a result by Hardy and Littlewood \cite[Thm.~10]{Hardy1927} on the convergence of Fourier series; the latter is the only step in \cite[Thm.~14]{Smithies1937} where the closedness and boundedness of the domain is explicitly used. A similar result for an orthogonal system in $L^2_{\mu}(-\infty,\infty)$, where $\mu$ is a finite measure, would be sufficient to extend Smithies' result to the real line. For one of the numerical examples presented later (Section \ref{sec:elliptic-equation}), we will assume that this result holds.
}\end{remark}

Other regularity properties can be proven, given different kinds of continuity of the function $f$. These properties are not strictly necessary in the scope of polynomial approximation theory, so we will state them without proof. The first regards the continuity of the cores of the FTT decomposition and follows directly from Mercer's theorem \cite{jorgens1982linear}.
\smallskip
\begin{proposition}[Continuity of FTT cores]\label{prop:continuity}
  Let $I_1 \times \cdots \times I_d = \mathbf{I} \subset \mathbb{R}^d$, and let $f \in L_{\mu}^2({\bf I})$ be a continuous function with FTT decomposition \eqref{eq:ContTTDecomposition}.
  Then the cores $\gamma_i(\alpha_{i-1},\cdot,\alpha_{i})$ are continuous for every $i$ and $\alpha_i$.
\end{proposition}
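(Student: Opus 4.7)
The plan is to obtain continuity via Mercer's theorem (or, equivalently, from the integral equation satisfied by each eigenfunction) and then iterate through the recursive construction of the FTT decomposition. The scalar continuity of $f$ on a compact product is converted into continuity of the kernels entering each functional SVD, which then transfers to the cores $\gamma_i$ and to the reduced residual functions $\sqrt{\lambda_i}\varphi_i$ that feed the next step.

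First I would handle the single step. Consider the functional SVD of $f$ with $X = I_1$ and $Y = I_2 \times \cdots \times I_d$. The kernel of $TT^{\ast}$ is
\begin{equation*}
  J_1(x_1,\bar x_1) = \int_Y f(x_1,y)\,f(\bar x_1,y)\,d\mu_Y(y).
\end{equation*}
Since $f$ is continuous on the compact set ${\bf I}$, it is bounded and uniformly continuous; dominated convergence then gives $J_1 \in C(I_1\times I_1)$, and $J_1$ is a symmetric positive semidefinite continuous kernel on a compact domain. Mercer's theorem (in the form used in \cite{jorgens1982linear}) implies that every eigenfunction $\gamma_1(\,\cdot\,;\alpha_1)$ of $TT^{\ast}$ corresponding to a nonzero eigenvalue $\lambda_1(\alpha_1)$ is continuous on $I_1$. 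For the ``other side'' functions, the eigenvalue relation yields
\begin{equation*}
  \varphi_1(\alpha_1;y) = \frac{1}{\sqrt{\lambda_1(\alpha_1)}}\int_{I_1} f(x_1,y)\,\gamma_1(x_1;\alpha_1)\,d\mu_1(x_1),
\end{equation*}
and again the joint continuity of $f$ together with dominated convergence shows that $\varphi_1(\alpha_1;\,\cdot\,)$ is continuous on $I_2\times\cdots\times I_d$ for every $\alpha_1$ with $\lambda_1(\alpha_1)>0$.

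Next I would iterate. Assume inductively that $\bigl(\sqrt{\lambda_{i-1}}\varphi_{i-1}\bigr)(\alpha_{i-1};x_i,\ldots,x_d)$ is continuous in $(x_i,\ldots,x_d)$ for every fixed $\alpha_{i-1}$. The next step of the construction applies the functional SVD to this kernel on $(\mathbb{N}\times I_i)\times(I_{i+1}\times\cdots\times I_d)$, where $\mathbb{N}$ carries the counting measure $\tau$. The induced $TT^{\ast}$ kernel
\begin{equation*}
  J_i\bigl((\alpha_{i-1},x_i),(\alpha_{i-1}',x_i')\bigr)=\int_{I_{i+1}\times\cdots\times I_d}\!\! \bigl(\sqrt{\lambda_{i-1}}\varphi_{i-1}\bigr)(\alpha_{i-1};x_i,z)\bigl(\sqrt{\lambda_{i-1}}\varphi_{i-1}\bigr)(\alpha_{i-1}';x_i',z)\,d\mu(z)
\end{equation*}
is, for each fixed pair $(\alpha_{i-1},\alpha_{i-1}')$, a continuous function of $(x_i,x_i')\in I_i\times I_i$ by the inductive hypothesis and dominated convergence. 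The eigenvalue equation applied slice-wise in the discrete index,
\begin{equation*}
  \lambda_i(\alpha_i)\,\gamma_i(\alpha_{i-1};x_i;\alpha_i) = \sum_{\alpha_{i-1}'}\int_{I_i} J_i\bigl((\alpha_{i-1},x_i),(\alpha_{i-1}',x_i')\bigr)\,\gamma_i(\alpha_{i-1}';x_i';\alpha_i)\,d\mu_i(x_i'),
\end{equation*}
combined with continuity of $J_i$ in the continuous variable $x_i$ (uniform in $x_i'$ on the compact $I_i$) and the $L^2$-summability in $\alpha_{i-1}'$ of the eigenfunction, shows that $\gamma_i(\alpha_{i-1};\,\cdot\,;\alpha_i)$ is continuous on $I_i$. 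The corresponding $\varphi_i$ is then recovered by an integral against $\gamma_i$ and the continuous kernel $\sqrt{\lambda_{i-1}}\varphi_{i-1}$, yielding continuity of $\sqrt{\lambda_i}\varphi_i(\alpha_i;\,\cdot\,)$ and closing the induction. The final core $\gamma_d = \sqrt{\lambda_{d-1}}\varphi_{d-1}$ is continuous by the same argument applied at the terminal step.

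The main technical obstacle is that Mercer's theorem is classically stated on a compact base space, whereas the recursion introduces the non-compact counting-measure factor $\mathbb{N}$. My way around this is to avoid invoking Mercer directly on the mixed space and instead work slice-by-slice in the discrete indices: at each recursion step, continuity needs to be established only in the continuous variables $x_i$ for each fixed pair of integer labels, and this reduces everything to continuous dependence of an integral operator on a parameter, which is exactly what dominated convergence and the eigenvalue identity provide. With this slicing, the only genuine appeal to Mercer's theorem occurs at the first step on the compact domain $I_1\times I_1$, and the remaining steps are essentially routine continuity-under-the-integral arguments.
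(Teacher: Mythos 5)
Your proposal is correct and follows exactly the route the paper indicates: the paper states this proposition without proof, attributing it to Mercer's theorem \cite{jorgens1982linear}, and your argument is precisely that attribution carried out in detail (continuity of the induced symmetric kernels, the eigenvalue integral equation, and an induction through the recursive SVDs, with the counting-measure factor handled slice-wise). The only caveat is that you implicitly assume $\mathbf{I}$ is compact --- which Mercer's theorem and your dominated-convergence steps require --- whereas the proposition as printed omits ``closed and bounded''; since every neighbouring result in Section~4.3 carries that hypothesis, this is best read as an omission in the statement rather than a gap in your proof.
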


\smallskip

The second property regards the strong derivatives of the cores of the FTT decomposition. It requires the Lipschitz continuity of the function and then follows from a result on the uniform convergence of the Schmidt decomposition by Hammerstein \cite{Hammerstein1923,TOWNSEND2013}.
\smallskip

\begin{theorem}[Differentiability of FTT cores]\label{thm:regularity}
  Let $I_1 \times \cdots \times I_d = \mathbf{I} \subset \mathbb{R}^d$ be closed and bounded, and let $f \in L_\mu^2({\bf I})$ be a Lipschitz continuous function such that $\frac{\partial^\beta f}{\partial x_1^{\beta_1} \cdots \partial x_d^{\beta d}} $ exists and is continuous on $\bf I$ for $\beta = \sum_{i=1}^d \beta_i$. Then the FTT decomposition \eqref{eq:ContTTDecomposition} is such that $ \gamma_k(\alpha_{k-1},\cdot,\alpha_k) \in \mathcal{C}^{\beta_k}(I_k) $ for all $k$, $\alpha_{k-1}$, and $\alpha_k$.
\end{theorem}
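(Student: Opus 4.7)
The plan is to mirror the structure of the proof of Theorem~\ref{thm:ftt-sobolev} (Sobolev regularity of FTT cores), but using \emph{strong} derivatives and uniform convergence in place of weak derivatives and $L^2$ convergence. First I would treat the bivariate base case: suppose $f \in L^2_\mu(X\times Y)$ is Lipschitz continuous on a closed and bounded domain with continuous mixed partials up to orders $(\beta_1,\beta_2)$, and consider its functional SVD $f = \sum_{j\geq 1} \sqrt{\lambda_j}\,\psi_j \otimes \phi_j$. By Hammerstein's theorem (cited in the excerpt), Lipschitz continuity guarantees uniform convergence of this series on $X\times Y$.

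Next, I would establish the analogue of \eqref{eq:WeakDerivativeDominatedConv} for strong derivatives. From the orthonormality of $\{\phi_j\}$ one obtains the integral representation
\begin{equation}\nonumber
\psi_j(x) = \frac{1}{\sqrt{\lambda_j}} \int_Y f(x,y)\, \phi_j(y)\, d\mu_y(y).
\end{equation}
Since $\partial^{\beta_1} f/\partial x_1^{\beta_1}$ is continuous on the compact set $X\times Y$, it is bounded and uniformly continuous; this supplies the integrable dominating function needed to differentiate under the integral sign $\beta_1$ times, giving
\begin{equation}\nonumber
D^{\beta_1}\psi_j(x) = \frac{1}{\sqrt{\lambda_j}} \int_Y \tfrac{\partial^{\beta_1}f}{\partial x_1^{\beta_1}}(x,y)\, \phi_j(y)\, d\mu_y(y),
\end{equation}
and a further application of dominated convergence (using continuity of the integrand in $x$) yields continuity of $D^{\beta_1}\psi_j$. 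The same reasoning applied to $\phi_j = \lambda_j^{-1/2}\langle f,\psi_j\rangle_{L^2_\mu(X)}$ gives $\phi_j \in \mathcal{C}^{\beta_2}(Y)$.

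Finally, I would extend to the full FTT decomposition \eqref{eq:ContTTDecomposition} by the same recursive construction used in the proof of Theorem~\ref{thm:ftt-sobolev}. At the $i$-th step, the kernel $(\sqrt{\lambda_i}\varphi_i)(\alpha_i;x_{i+1},\ldots,x_d)$ plays the role of $f$ for the next SVD, with $x_{i+1}$ taking the role of ``$x_1$'' and $(x_{i+2},\ldots,x_d)$ the role of ``$y$.'' Applying the bivariate result at each step yields $\gamma_k(\alpha_{k-1},\cdot,\alpha_k) \in \mathcal{C}^{\beta_k}(I_k)$ for every $k$. The main obstacle I anticipate is the bookkeeping required to verify that the intermediate kernels $\sqrt{\lambda_i}\varphi_i$ inherit the needed smoothness — i.e., that they remain Lipschitz continuous and possess continuous partials of order $\beta_{i+1}$ in $x_{i+1}$, so that Hammerstein's theorem and the differentiation-under-the-integral argument can be invoked at each recursion level. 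This inheritance can be verified by the same Cauchy--Schwarz / integral-representation argument as in Theorem~\ref{thm:ftt-sobolev}, combined with continuity of the relevant partial derivatives of $f$ transported through the orthogonal projection that defines $\varphi_i$.
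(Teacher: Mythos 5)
The paper states Theorem~\ref{thm:regularity} without proof, remarking only that it ``follows from a result on the uniform convergence of the functional SVD by Hammerstein''; your proposal fleshes out exactly that route --- Hammerstein's uniform convergence for Lipschitz kernels, the integral representation of the singular functions, differentiation under the integral sign on the compact domain, and recursion through the FTT construction --- and is correct. The recursion bookkeeping you flag (propagating continuity of the partial derivatives of $\left(\sqrt{\lambda_i}\varphi_i\right)$ through the infinite sums over the $\alpha$ indices) is indeed the only place where the uniform/absolute convergence supplied by Hammerstein is genuinely needed, since the bivariate step follows from the integral representation alone.
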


\subsection{Connecting the DTT and FTT decompositions}
\label{sec:dttftt}
The practical construction of the FTT decomposition must rely on evaluations of the function $f$ at selected points in its domain. It is natural to describe these pointwise evaluations through a discrete TT decomposition. The construction of the discrete TT decomposition, whether through \texttt{TT-SVD}, \texttt{TT-cross}, or \texttt{TT-DMRG-cross}, is based on the nonlinear minimization problem \eqref{eq:TT-dmrg-minimization}, leading to the approximation error \eqref{eq:TT-SVD-error-2} defined in terms of the Frobenius norm. The FTT decomposition requires instead solving the analogous minimization problem \eqref{eq:Cont-TT-convergence} defined in terms of the functional $L_\mu^2$ norm. We must then find a connection between these two minimization problems.

Using the fact that $\mu$ is a product measure, one can construct the tensor-product quadrature rule $Q$ defined by the points and weights $(\bm{\mathcal{X}},\bm{\mathcal{W}})$, where $\bm{\mathcal{X}}=\times_{j=1}^d {\bf x}_j$, $\bm{\mathcal{W}} = {\bf w}_1 \otimes \cdots \otimes {\bf w}_{d_{\rm s}}$, and $({\bf x}_j,{\bf w}_j)$ defines a Gauss-type quadrature rule in the $j$th dimension with respect to the measure $\mu_j$; see Section \ref{sec:approximation-theory}. Now let $h(\bm{\mathcal{X}}_{\bf i}) = f(\bm{\mathcal{X}}_{\bf i}) \sqrt{\bm{\mathcal{W}}_{\bf i}}$, where ${\bf i}=(i_1,\ldots,i_d)$. Then, for $\bm{\mathcal{B}} = h(\bm{\mathcal{X}})$,
\begin{equation}
  \label{eq:P1:Ch5:STT:ReWeight}
  \Vert f \Vert_{L^2_{\mu}} = \sum_{i_1 = 1}^{n_1} \cdots \sum_{i_d = 1}^{n_d} f^2(\bm{\mathcal{X}}_{\bf i}) \bm{\mathcal{W}}_{\bf i} + \mathcal{O}(N^{-k}) = \Vert \bm{\mathcal{B}} \Vert_F^2 + \mathcal{O}(N^{-k}) \;,
\end{equation}
where the approximation is exact for polynomial functions up to order $2 n_j-1$. One can then seek the DTT decomposition $\bm{\mathcal{B}}_{TT}$ satisfying 
$$
\Vert \bm{\mathcal{B}} - \bm{\mathcal{B}}_{TT} \Vert_F \leq \varepsilon \Vert \bm{\mathcal{B}} \Vert_F 
$$
using one of the methods outlined in Section \ref{sec:tensor-decomposition}. 
This approach allows us to approximate the solution of the minimization problem \eqref{eq:Cont-TT-convergence}, achieving a relative error
\begin{equation}
  \label{eq:ApproximationErrorFTT}
  \Vert f - f_{TT} \Vert_{L_\mu^2} \lesssim \varepsilon \Vert f \Vert_{L_\mu^2} \;.
\end{equation}
The error in the approximation bound \eqref{eq:ApproximationErrorFTT} is due to truncation error introduced by replacing the $L^2_\mu$ norm with a finite order quadrature rule, as well as aliasing due to the approximation of $f(\bm{\mathcal{X}}_{\bf i}) \sqrt{\bm{\mathcal{W}}_{\bf i}}$ by $\bm{\mathcal{B}}_{TT}({\bf i})$. Both of these errors disappear as $n_1,\ldots,n_d$ are increased. An appropriately error-weighted DTT decomposition of $\bm{\mathcal{A}}=f(\bm{\mathcal{X}})$ can then be recovered as $\bm{\mathcal{A}}^w_{TT} = \bm{\mathcal{B}}_{TT}/\sqrt{\bm{\mathcal{W}}}$, where we assume strictly positive quadrature weights. The numerical tests presented in Section \ref{sec:numerical-examples} confirm the idea that the relative $L^2_\mu$ error shown in \eqref{eq:ApproximationErrorFTT} can be achieved for sufficiently large $n_i$.

Note that the approach just described is not limited to Gaussian quadrature rules. For instance, with a uniform measure $\mu$ one could use a Newton-Cotes rule---e.g., a trapezoidal rule with equally spaced points and uniform weights---to approximate the $L^2_\mu$ norm. In this case, $\bm{\mathcal{B}}  = h(\bm{\mathcal{X}}) \propto f(\bm{\mathcal{X}}) = \bm{\mathcal{A}}$, and the DTT approximation can be applied directly to $\bm{\mathcal{A}}$.

\subsection{Polynomial approximation of the FTT decomposition}
All the theory needed to combine the FTT decomposition with the polynomial approximations described in Section \ref{sec:approximation-theory} is now in place. We will consider the projection and  interpolation approaches separately.

\subsubsection{Functional tensor-train projection}\label{sec:tens-train-proj}
Let $f \in \mathcal{H}^{k}_\mu(\mathbf{I})$ and let $f_{TT}$ be the rank--$\mathbf{r}$ FTT approximation of $f$. Applying the projector \eqref{eq:SpectralExpansion} to $f_{TT}$ yields $P_{\mathbf{N}} f_{TT} = \sum_{{\bf i}=0}^{\mathbf{N}} \tilde{c}_{\bf i} \Phi_{\bf i}$, where
\begin{equation}\label{eq:TensorTrainProjection}
  \tilde{c}_{\bf i} = \int_{\bf I} f_{TT}(\mathbf{x}) \Phi_{\bf i}(\mathbf{x}) \, d\mu(\mathbf{x}) = \sum_{\alpha_0,\ldots,\alpha_d = 1}^{\bf r}\beta_1(\alpha_0,i_1,\alpha_1) \cdots \beta_d(\alpha_{d-1},i_d,\alpha_d)
\end{equation}
and
\begin{equation}
    \beta_n(\alpha_{n-1},i_n,\alpha_{n}) = \int_{I_n}\gamma_n(\alpha_{n-1},x_n,\alpha_n) \, \phi_{i_n}(x_n) \, d\mu_n(x_n) .
\end{equation}
The spectral expansion of $f_{TT}$ can thus be obtained by projecting its cores $\gamma_n(\alpha_{n-1}, x_n, \alpha_n)$ onto univariate basis functions. Furthermore, we immediately have, via \eqref{eq:TensorTrainProjection}, a tensor-train representation of the expansion coefficients $\bm{\mathcal{C}}:=[c_{\bf i}]_{{\bf i} = 0}^N $.

By Theorems \ref{thm:ftt-approx-conv} and \ref{thm:ftt-sobolev}, the convergence of the spectral expansion depends on the regularity of $f$. Let $f\in \mathcal{H}_\mu^k({\bf I})$ for $k>d-1$. Then:
\rvnote*{\#1-1}{
\begin{equation}
  \begin{aligned}
    \Vert f - P_N f_{TT} \Vert_{L^2_\mu({\bf I})} &\leq \Vert f - f_{TT} \Vert_{L^2_\mu({\bf I})} + \Vert f_{TT} - P_N f_{TT} \Vert_{L^2_\mu({\bf I})} \\
    &\leq \Vert f \Vert_{\mathcal{H}_\mu^k({\bf I})} \sqrt{ (d-1) \frac{(r+1)^{-(k-1)}}{k-1} } + C(k)N^{-k}\vert f_{TT} \vert_{{\bf I},\mu,k} \;.
  \end{aligned}
\end{equation}}
This result shows that convergence is driven by the selection of the rank $r$ and the polynomial degree $N$, and that it improves for functions with increasing regularity. Thus we can efficiently compute the expansion coefficients $\bm{\mathcal{C}}$ by \eqref{eq:CoreProjection} and obtain an approximation $P_{\mathbf{N}}f_{TT}$ that converges spectrally.

In practice, the projector $P_{\mathbf{N}}$ is replaced by the discrete projector $\widetilde{P}_{\mathbf{N}}$ \eqref{eq:DiscreteSpectralExpansion}, such that the coefficients $\{ \beta_n \}$ representing projections of the cores are approximated as
\begin{equation}\label{eq:CoreProjection}
  \beta_n(\alpha_{n-1},i_n,\alpha_{n}) \approx \hat{\beta}_n(\alpha_{n-1},i_n,\alpha_{n}) = \sum_{j=0}^{N_n} \gamma_n(\alpha_{n-1},x_n^{(j)},\alpha_n)\phi_{i_n}(x_n^{(j)}) w_n^{(j)} \, ,
\end{equation}
where $\{(x_n^{(j)},w_n^{(j)})\}_{j=0}^{N_n}$ are appropriate quadrature nodes and weights (e.g., Gauss rules, as described in Section~\ref{sec:approximation-theory}) for dimension $n$. This numerical approximation requires evaluating the cores of the FTT decomposition at the quadrature points. But these values $\gamma_n(\alpha_{n-1},x_n^{(j)},\alpha_n)$ in fact are approximated by the cores of the \textit{discrete} TT approximation of $f(\bm{\mathcal{X}})$ -- that is, $\bm{\mathcal{A}}^w_{TT}$, as described in Section~\ref{sec:dttftt}. The end result of this procedure can be viewed as the TT representation $\bm{\mathcal{C}}_{TT}$ of the spectral coefficient tensor $\bm{\mathcal{C}}$. The computational procedure is summarized in Procedure \ref{alg:FTT-projection-construction}.

\begin{algorithm}[t]
\caption{\texttt{FTT-projection-construction}}
\label{alg:FTT-projection-construction}
\begin{algorithmic}[1]
\Require Function $f:{\bf I} \rightarrow \mathbb{R}$; measure $\mu = \prod_{n=1}^d \mu_n$; integers ${\mathbf{N}}=\{ N_n \}_{n=1}^d $ denoting the polynomial degrees of approximation; univariate basis functions $\left\{ \left\{ \phi_{i_n,n} \right\}_{i_n=0}^{N_n} \right\}_{n=1}^d$ orthogonal with respect to $\mu_n$; \texttt{DMRG-cross} approximation tolerance $\varepsilon$.
\Ensure $\bm{\mathcal{C}}_{TT}(i_1,\ldots,i_d) = \sum_{\alpha_0,\ldots,\alpha_d=1}^{\bf r} \hat{\beta}_1(\alpha_0,i_1,\alpha_1) \cdots \hat{\beta}_d(\alpha_{d-1},i_d,\alpha_d)$, the TT-de\-com\-po\-si\-tion of the tensor of expansion coefficients.
\State Determine the univariate quadrature nodes and weights in each dimension, $ \left\{ ({\bf x}_n, {\bf w}_n) \right\}_{n=1}^d $,  where ${\bf x}_n = \{x_n^{(i)}\}_{i=0}^{N_n}$ and ${\bf w}_n = \{w_n^{(i)}\}_{i=0}^{N_n}$
\State Construct the $\varepsilon$--accurate approximation $\bm{\mathcal{B}}_{TT}$ of $h\left(\bm{\mathcal{X}}_{\bf i}\right) = f\left(\bm{\mathcal{X}}_{\bf i}\right)\sqrt{\bm{\mathcal{W}}_{\bf i}}$ using \texttt{TT-DMRG-cross}
\State Recover the approximation of $f(\bm{\mathcal{X}})$ as $\bm{\mathcal{A}}^w_{TT}=\bm{\mathcal{B}}_{TT}/\sqrt{\bm{\mathcal{W}}}$, with cores $\{ G_n \}_{n=1}^d$ and associated TT-ranks ${\bf r}$
\For{$n:=1$ to $d$}
  \For{$i_n:=0$ to $N_n$}
    \ForAll{$(\alpha_{n-1},\alpha_{n}) \in [0,r_{n-1}] \times [0,r_n] $}
      \State $\hat{\beta}_n(\alpha_{n-1},i_n,\alpha_{n}) = \sum_{j=0}^{N_n} G_n(\alpha_{n-1},j,\alpha_n)\phi_{i_n,n}(x_n^{(j)}) w_n^{(j)}$
    \EndFor
  \EndFor
\EndFor
\State \Return $\left\{ \hat{\beta}_n \right\}_{n=1}^d $
\end{algorithmic}
\end{algorithm}

Once Procedure~\ref{alg:FTT-projection-construction} (\texttt{FTT-projection-construction}) has been run, the spectral TT approximation can be evaluated at an arbitrary point $\mathbf{y}=\{y_1,\ldots,y_d\} \in {\bf I}$ by the procedure described in Procedure \ref{alg:FTT-projection-evaluation}.

\begin{algorithm}
\caption{\texttt{FTT-projection-evaluation}}
\label{alg:FTT-projection-evaluation}
\begin{algorithmic}[1]
\Require Cores $\left\{ \hat{\beta}_n(\alpha_{n-1},i_n,\alpha_n) \right\}_{n=1}^d $ obtained through \texttt{FTT\--projection\--con\-struc\-tion}; $N^y$ evaluation points $\mathbf{y}^{(i)} :=\{y_1^{(i)},\ldots,y_d^{(i)}\} \in {\bf I}$, $i \in [1, N^y]$, collected in the $N^y\times d$ matrix $\mathbf{Y} :=\{\mathbf{y}_1,\ldots,\mathbf{y}_d\}$.

\Ensure Polynomial approximation $\widetilde{P}_{\mathbf{N}} f_{TT}(\mathbf{Y})$ of $f(\mathbf{Y})$
\For{$n:=1$ to $d$}
  \For{$i:=1$ to $N^y$}
    \ForAll{$(\alpha_{n-1},\alpha_{n}) \in [0,r_{n-1}] \times [0,r_n] $}
      \State $\hat{G}_n(\alpha_{n-1}, i ,\alpha_{n}) = \sum_{j=0}^{N_n} \hat{\beta}_n(\alpha_{n-1},j,\alpha_n) \phi_{j,n}( y_n^{(i)}) $
    \EndFor
  \EndFor
\EndFor
\State $\bm{\mathcal{B}}_{TT}(i_1,\ldots,i_d) = \sum_{\alpha_0,\ldots,\alpha_d=1}^{\bf r} \hat{G}_1(\alpha_0,i_1,\alpha_1) \cdots \hat{G}_d(\alpha_{d-1},i_d,\alpha_d)$
\State \Return $\widetilde{P}_{\mathbf{N}} f_{TT}(\mathbf{Y}) := \left\{ \bm{\mathcal{B}}_{TT}(i,\ldots,i) \right\}_{i=1}^{N^y}$
\end{algorithmic}
\end{algorithm}

\subsubsection{Functional tensor-train interpolation}
Function interpolation can easily be extended to tensors, and the tensor-train format can be exploited to save computation and storage costs. We will first consider linear interpolation, using the notation of Section \ref{sec:interpolation}. Let $\bm{\mathcal{X}} = \times_{j=1}^d \mathbf{x}_j $ be a $N^x_1 \times \cdots \times N^x_d $ tensor of candidate interpolation nodes where the function $f$ can be evaluated, and let the matrix $\mathbf{Y}=\{ \mathbf{y}_1,\ldots,\mathbf{y}_d \}$ of size $N^y\times d$ represent a set of $N^y$ points where one wishes to evaluate the approximation of $f$. Define $\bm{\mathcal{Y}} = \times_{j=1}^d \mathbf{y}_j $.
%
%
An approximation of $f(\mathbf{Y})$ can be computed using the interpolation operator \eqref{eq:multilin-interpolation} from the grid $\bm{\mathcal{X}}$ to the grid $\bm{\mathcal{Y}}$
\begin{equation}\label{eq:ttinterp1}
  f(\bm{\mathcal{Y}}) \simeq \left( I_{\mathbf{N}}f \right)(\bm{\mathcal{Y}}) = \mathbf{E} f(\bm{\mathcal{X}}), \qquad \mathbf{E} = E^{(1)} \otimes \cdots \otimes E^{(d)},
\end{equation}
where $E^{(k)}$ is a $N^y \times N^x_{k}$ matrix defined by $E^{(k)}(i,j)=e^{(k)}_j({y}_k^{(i)}) $ as in \eqref{eq:multilin-shapefunc}, and then extracting only the diagonal of the tensor $f(\bm{\mathcal{Y}})$: $f(\mathbf{Y}) \simeq \left\{ \left( I_{\mathbf{N}}f \right)(\bm{\mathcal{Y}})_{i,\ldots,i} \right\}_{i=1}^{N^y}$. 
This leads to multi-linear interpolation on hypercubic elements. If we use the FTT approximation $f_{TT}$ instead of $f$ in \eqref{eq:ttinterp1}, we obtain
\begin{eqnarray} 
\label{eq:ttinterp_lin}
      \left( I_{\mathbf{N}}f_{TT} \right)(\bm{\mathcal{Y}})  = \mathbf{E} f_{TT}(\bm{\mathcal{X}}) & = & 
\mathbf{E} \left [ \sum_{\alpha=0,\ldots,\alpha_d = 1}^{\bf r} \gamma_1(\alpha_0,{\bf x}_1,\alpha_1) \cdots \gamma_d(\alpha_{d-1},{\bf x}_d,\alpha_d) \right ] \\
      &=& \sum_{\alpha=0,\ldots,\alpha_d = 1}^{\bf r} \beta_1(\alpha_0,\mathbf{y}_1,\alpha_1) \cdots \beta_d(\alpha_{d-1},\mathbf{y}_d,\alpha_d), \nonumber
 \end{eqnarray}
with
\begin{equation*}
  \beta_n(\alpha_{n-1},\mathbf{y}_n,\alpha_n) = E^{(n)} \gamma_n(\alpha_{n-1},{\bf x}_n,\alpha_n) \; .
\end{equation*}
Thus, instead of working with the tensor $\mathbf{E}$, we can work with the more manageable matrices $\{ E^{(n)} \}_{n=1}^d$. This approach is described in Procedure \ref{alg:FTT-interpolation-evaluation}. The ``construction'' phase of this approximation corresponds simply to applying the \texttt{TT-DMRG-cross} algorithm to $f(\bm{\mathcal{X}})$ to obtain $\bm{\mathcal{A}}^w_{TT}$, as described in Section~\ref{sec:dttftt}. The listing of \texttt{FTT-in\-ter\-po\-la\-tion-con\-struc\-tion} is thus omitted.
The basis functions \eqref{eq:multilin-shapefunc} yield quadratic convergence of the interpolant to the target function. Thus, for $k>d-1$ and $f \in \mathcal{H}_\mu^k({\bf I})$,
\rvnote*{\#1-1}{
\begin{equation}
    \Vert f - I_N f_{TT} \Vert_{L^2_\mu({\bf I})} \leq \Vert f \Vert_{\mathcal{H}_\mu^k({\bf I})} \sqrt{ (d-1) \frac{(r+1)^{-(k-1)}}{k-1} } + CN^{-2}\vert f_{TT} \vert_{{\bf I},\mu,2} \; .
\end{equation}}
Because these basis functions have local support (as opposed to the global support of the polynomials used for \texttt{FTT-projection}), errors due to singularities in $f$ do not pollute the entire domain.

\begin{algorithm}[t]
\caption{\texttt{FTT-interpolation-evaluation}}
\label{alg:FTT-interpolation-evaluation}
\begin{algorithmic}[1]
\Require Tensor of interpolation points $\bm{\mathcal{X}} = \times_{n=1}^d \mathbf{x}_n$, where $\mathbf{x}_n = \{x_n^{(i)} \}_{i=1}^{N^x_n} \subseteq I_n$; 
$\varepsilon$--accurate approximation $\bm{\mathcal{A}}^w_{TT}$ (in general) or $\bm{\mathcal{A}}_{TT}$ (uniform $\mu$, linear interpolation, equispaced points) of $f(\bm{\mathcal{X}})$ obtained by \texttt{TT-DMRG-cross}, with cores $\{ G_n \}_{n=1}^d$ and TT-ranks ${\bf r}$; 
evaluation points $\mathbf{y}^{(i)} :=\{y_1^{(i)},\ldots,y_d^{(i)}\} \in {\bf I}$, $i \in [1, N^y]$, collected in the $N^y\times d$ matrix $\mathbf{Y} :=\{\mathbf{y}_1,\ldots,\mathbf{y}_d\}$
\Ensure Interpolated approximation $I_{\mathbf{N}}f_{TT}(\mathbf{Y})$ or $\Pi_{\mathbf{N}}f_{TT}(\mathbf{Y})$ of $f(\mathbf{Y})$
\State Construct list $\left\{ L^{(i)} \right\}_{i=1}^d $ of $N^y \times N^x_{i}$ (linear or Lagrange) interpolation matrices from ${\bf x}_i$ to $\mathbf{y}_i$ 
\For{$n:=1$ to $d$}
  \ForAll{$(\alpha_{n-1},\alpha_{n}) \in [0,r_{n-1}] \times [0,r_n] $}
    \State $\hat{G}_n(\alpha_{n-1},:,\alpha_{n}) = L^{(n)} G_n(\alpha_{n-1},:,\alpha_n)$
  \EndFor
\EndFor
\State $\bm{\mathcal{B}}_{TT}(i_1,\ldots,i_d) = \sum_{\alpha_0,\ldots,\alpha_d=1}^{\bf r} \hat{G}_1(\alpha_0,i_1,\alpha_1) \cdots \hat{G}_d(\alpha_{d-1},i_d,\alpha_d)$
\State \Return $I_{\mathbf{N}}f_{TT}(\mathbf{Y}) := \left\{ \bm{\mathcal{B}}_{TT}(i,\ldots,i) \right\}_{i=1}^{N^y}$
\end{algorithmic}
\end{algorithm}

The same approach can be used for higher-order polynomial interpolation with Lagrange basis functions. 
The interpolated values can be obtained by extracting the diagonal $f(\mathbf{Y}) \simeq \left\{ \left( \Pi_{\mathbf{N}}f_{TT} \right)(\bm{\mathcal{Y}})_{i,\ldots,i} \right\}_{i=1}^{N^y}$ of
\begin{equation}
  f(\bm{\mathcal{Y}}) \simeq \left( \Pi_{\mathbf{N}}f \right)(\bm{\mathcal{Y}}) = {\bf L} f(\bm{\mathcal{X}}) , \qquad {\bf L} = L^{(1)} \otimes \cdots \otimes L^{(d)},
\end{equation}
where $L^{(k)}$ is the $N^y \times N^x_{k}$ Lagrange interpolation matrix \cite{Kopriva2009}. This interpolation is not carried out directly in high dimensions; as in the linear interpolation case, we only need to perform one-dimensional interpolations of the cores, i.e.,
\begin{equation}
  \label{eq:ttinterp_poly}
  \begin{aligned}
      \left( \Pi_{\mathbf{N}}f_{TT}\right)(\bm{\mathcal{Y}}) = {\bf L} f_{TT}(\bm{\mathcal{X}}) & = \sum_{\alpha_0,\ldots,\alpha_d = 1}^{\bf r} \beta_1(\alpha_0,\mathbf{y}_1,\alpha_1) \cdots \beta_d(\alpha_{d-1},\mathbf{y}_d,\alpha_d) \; , \\
\text{with} \     \beta_n(\alpha_{n-1},\mathbf{y}_n,\alpha_n) & = L^{(n)} \gamma_n(\alpha_{n-1},{\bf x}_n,\alpha_n) \; .
\end{aligned}
\end{equation}
Again, the evaluation procedure is detailed in Procedure~\ref{alg:FTT-interpolation-evaluation}. Convergence of the FTT interpolant is again dictated by the regularity of the function $f$. For $k > d-1$ and $f \in \mathcal{H}_\mu^k({\bf I})$, we have
\rvnote*{\#1-1}{
\begin{equation}
    \Vert f - \Pi_N f_{TT} \Vert_{L^2_\mu({\bf I})} \leq \Vert f \Vert_{\mathcal{H}_\mu^k({\bf I})} \sqrt{ (d-1) \frac{(r+1)^{-(k-1)}}{k-1} } + C(k)N^{-k}\vert f_{TT} \vert_{{\bf I},\mu,k} \; .
\end{equation}}

\subsubsection{Summary of algorithms}\label{sec:algorithm}

The preceding algorithms produce approximations of $f$ that involve both a (truncated) FTT approximation and polynomial (or piecewise linear) approximations of the FTT cores. We term these \textit{spectral tensor-train} (STT) approximations and summarize the algorithms as follows.

Suppose we have a function $f: {\bf I} \rightarrow \mathbb{R}$ where ${\bf I} = \times_{i=1}^d I_i$ and $I_i \subseteq \mathbb{R}$, for $i=1 \ldots d$. We would like to construct an STT approximation of $f$ and to evaluate this approximation on an independent set of points $\mathbf{Y}$. The \textit{construction} and \textit{evaluation} of the approximation involve the following steps:
\begin{enumerate}
\item Select a suitable set of candidate nodes $\bm{\mathcal{X}} = \times_{n=1}^d {\bf x}_n$ according to the type of approximation to be constructed; typically these are tensor-product quadrature or interpolation nodes.
\item In the projection approach, construct the approximation using Procedure \ref{alg:FTT-projection-construction}. In the interpolation approach, directly construct the approximation
$\bm{\mathcal{A}}^w_{TT}$ 
by applying \texttt{TT-DMRG-cross} to 
$h(\bm{\mathcal{X}})$, as described in Section~\ref{sec:dttftt}. 
In both approaches, we apply \texttt{TT-DMRG-cross} to the \textit{quantics folding} of the relevant tensors. This provides important performance improvements, particularly in low dimensions where \texttt{TT-DMRG-cross} would otherwise require taking the SVD of $f$ evaluated on hyperplanes.

\item Evaluate the the spectral tensor-train approximation on $\mathbf{Y}$ using Procedure \ref{alg:FTT-projection-evaluation} for the projection approach or using Procedure \ref{alg:FTT-interpolation-evaluation} for linear or Lagrange interpolation.
\end{enumerate}
Below, we will refer to the \texttt{FTT-projection} and the \texttt{FTT-interpolation} algorithms as the combination of the two corresponding steps of construction and evaluation. Our implementation of these algorithms uses data structures to cache computed values and to store partially computed decompositions. It also supports parallel evaluation of $f$ during the execution of \texttt{TT-DMRG-cross}, using the MPI protocol.


\section{Numerical examples}\label{sec:numerical-examples}

We now apply the spectral tensor-train approximation to several high dimensional functions. The construction of the approximation $\bm{\mathcal{A}}^w_{TT}  \simeq \bm{\mathcal{A}} = f(\bm{\mathcal{X}})$ is obtained through the application of the \texttt{TT-DMRG-cross} algorithm to the quantics folding of $\bm{\mathcal{B}} = h(\bm{\mathcal{X}})$, which leads to a sparser selection of the evaluation points. The quality of these approximations will be evaluated using the relative $L^2$ error:
\begin{equation}\label{eq:L2err}
  e_\text{rel} := {\Vert f - \mathcal{L}f_{TT} \Vert_{L^2_\mu({\bf I})}}/{\Vert f \Vert_{L^2_\mu({\bf I})}} ,
\end{equation}
where $\mathcal{L}$ is one of the projection ($P_N$) or interpolation ($I_N$, $\Pi_N$) operators. Integrals in the numerator and denominator of \eqref{eq:L2err} are estimated using Monte Carlo, with the number of samples chosen so that the relative error in $e_{\text{rel}}$ is less than $10^{-2}$.

\subsection{Genz functions and modified Genz functions}
The Genz functions \cite{genz1983a,Genz1987} are frequently used to evaluate function approximation schemes. They are defined on $[0,1]^d$, equipped with the uniform measure, as follows:
\begin{equation}
  \label{eq:GenzFunctions}
  \begin{aligned}
    f_1({\bf x}) &= \cos\left( 2\pi w_1 + \sum_{i=1}^d c_i x_i \right)\;, \quad &f_2({\bf x}) &= \prod_{i=1}^d \left( c_i^{-2} + (x_i + w_i)^2 \right)^{-1}\;,\\
    f_3({\bf x}) &= \left( 1 + \sum_{i=1}^d c_i x_i \right)^{-(d+1)}\;, \quad &f_4({\bf x}) &= \exp \left( - \sum_{i=1}^d c_i^2 (x_i-w_i)^2 \right)\;,\\
    f_5({\bf x}) &= \exp \left( - \sum_{i=1}^d c_i^2 \vert x_i-w_i \vert \right)\;, \quad &f_6({\bf x}) &=
    \begin{cases}
      0 & \text{if } x_1 > w_1 \;\text{or}\; x_2 > w_2\;, \\
      \exp \left( \sum_{i=1}^d c_i x_i \right) & \text{otherwise}\;,
    \end{cases}
  \end{aligned}
\end{equation}
and are respectively known as `oscillatory,' `product peak,' `corner peak,' `Gaussian,' `continuous,' and `discontinuous' functions.
\begin{table}
  \centering
  \begin{tabular}[center]{c|cccccc}
    & $f_1$ & $f_2$ & $f_3$ & $f_4$ & $f_5$ & $f_6$ \\ \hline
    $b_j$ & 284.6 & 725.0 & 185.0 & 70.3 & 2040.0 & 430.0 \\
    $h_j$ & 1.5 & 2.0 & 2.0 & 1.0 & 2.0 & 2.0
  \end{tabular}
  \caption{Normalization parameters for the Genz functions.}
  \label{tab:genz-norm}
\end{table}
\noindent The parameters ${\bf w}$ are drawn uniformly from $[0,1]$ and act as a shift for the function. 
In the classical definition of the Genz functions \cite{genz1983a,Genz1987}, the parameters ${\bf c}$ are drawn uniformly from $[0,1]$ and then normalized such that $  d^{h_j} \Vert {\bf c} \Vert_1 = b_j$, with $j$ indexing the six Genz functions. The ``difficulty'' of approximating the functions increases monotonically with $b_j$. The scaling constants $h_j$ are defined as suggested in \cite{genz1983a,Genz1987}, while $b_j$ are selected in order to obtain the same test functions used for $d=10$ in \cite{Barthelmann2000}. These values are listed in Table \ref{tab:genz-norm}.

By the definition of the Genz functions above, it is apparent that the approximation difficulty (as measured by the number of function evaluations required to achieve a certain error) does not increase substantially with dimension. This is also confirmed by numerical experiments. 
As an example, consider the `Gaussian' function $f_4$. It has the rank one representation
\begin{equation}
  \label{eq:GenzFunctions:Gaussian:RankOne}
  f_4({\bf x}) = \exp \left( - \sum_{i=1}^d c_i^2 (x_i-w_i)^2 \right) = \prod_{i=1}^d \exp \left( -c_i^2 (x_i-w_i)^2 \right) \;.
\end{equation}
Recall that ${\bf c}$ is normalized such that $\Vert {\bf c} \Vert_1 = \frac{b_j}{d^{h_j}}$. Then, for $d \rightarrow \infty$ and for the values of $h_j$ and $b_j$ listed in Table \ref{tab:genz-norm}, $c_i \rightarrow 0$ and $f_4(\mathbf{x}) \rightarrow 1$. Thus, with higher dimensions $d$ the function becomes nearly constant and hence easier to approximate.

We would instead like to test the performance of the STT approximation on a set of functions whose ``difficulty'' continues growing with dimension. To this end, we use the definition \eqref{eq:GenzFunctions} of the Genz functions but refrain from normalizing the coefficients ${\bf c} \sim \mathcal{U}([0,1]^d)$. This choice produces functions that do not degenerate to constants with increasing $d$, and thus can be used for meaningful tests in higher dimensions. We will refer to these functions as the ``modified Genz functions.''

For the sake of analyzing the following numerical experiments, it is important to note that most of the Genz functions---modified or not---are analytically low rank, meaning that they can be exactly written in FTT format with finite rank. As noted above, the `Gaussian' Genz function \eqref{eq:GenzFunctions:Gaussian:RankOne} has a FTT rank of one, independent of $d$. In the same way, the `product peak,'  `continuous,' and `discontinuous' functions are FTT rank-one functions, while the `oscillatory' function is a FTT-rank-two function. In contrast, the `corner peak' function cannot be  represented with finite FTT rank, leading to a dependence of its numerical FTT rank on the dimension $d$.

The numerical experiments below are performed by randomly sampling 30 independent sets of parameters $\bf w$ and $\bf c$ for each Genz function and evaluating the relative $L^2$ error \eqref{eq:L2err} for each approximation. We will show the relationship between this error and the number of function evaluations employed, for different values of the input dimension $d$ and different polynomial degrees. Both the error and the number of function evaluations will vary depending on the particular function at hand. In particular, the number of function evaluations is driven by the procedure for obtaining a discrete TT approximation on the desired tensor grid using the \texttt{TT-DMRG-cross} algorithm (see Section~\ref{sec:tt-dmrg}).
We use a conservative value of $\varepsilon=10^{-10}$ for the target relative accuracy \eqref{eq:TT-SVD-error-2} of the \texttt{TT-DMRG-cross} approximation.

\subsubsection{\texttt{FTT-projection} of the modified Genz functions}

\afterpage{
\begin{figure}
  \centering
  \begin{subfigure}[b]{0.48\textwidth}
    \includegraphics[width=\textwidth]{./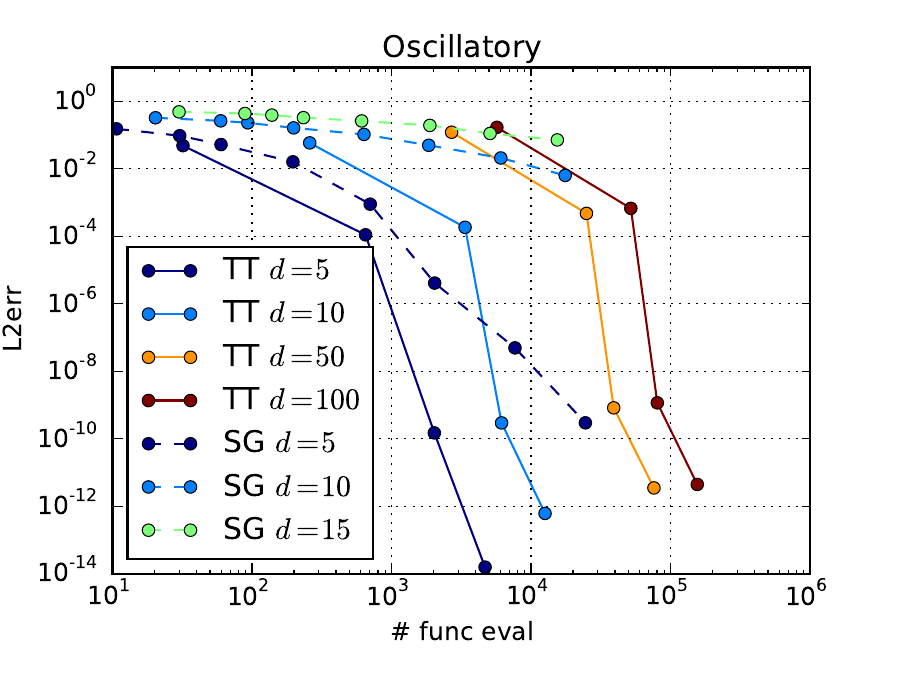}
    \label{fig:mod-genz-prj-oscillatory}
  \end{subfigure}
  ~
  \begin{subfigure}[b]{0.48\textwidth}
    \includegraphics[width=\textwidth]{./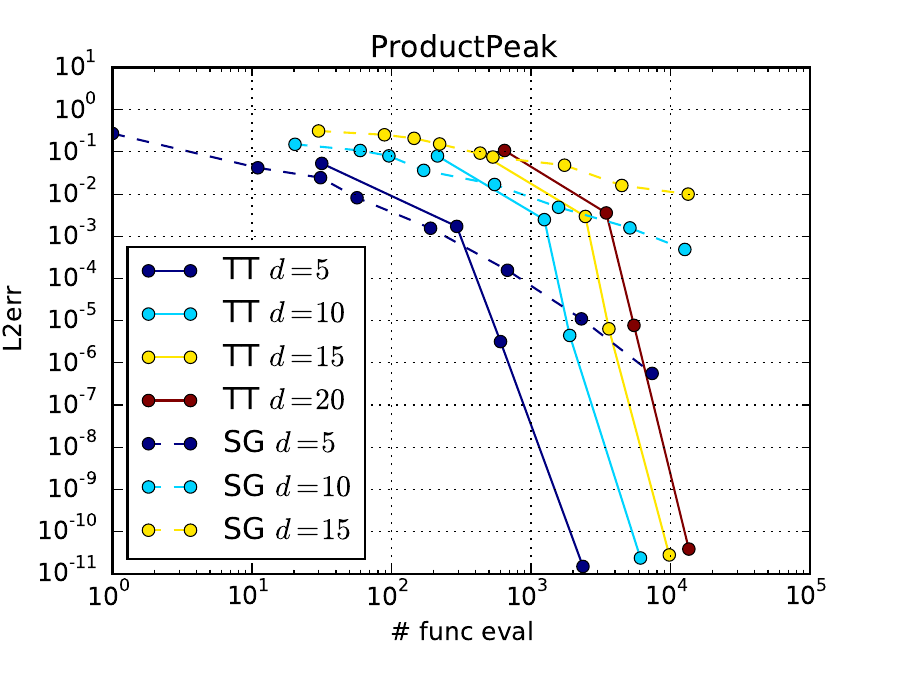}
    \label{fig:mod-genz-prj-productpeak}
  \end{subfigure}\\
  \begin{subfigure}[b]{0.48\textwidth}
    \includegraphics[width=\textwidth]{./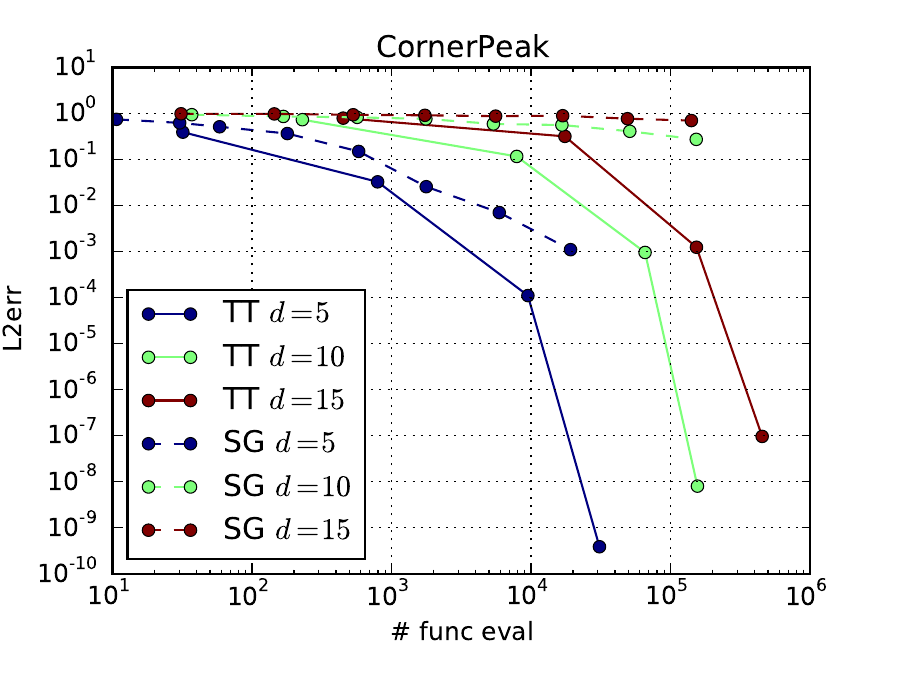}
    \label{fig:mod-genz-prj-cornerpeak}
  \end{subfigure}
  ~
  \begin{subfigure}[b]{0.48\textwidth}
    \includegraphics[width=\textwidth]{./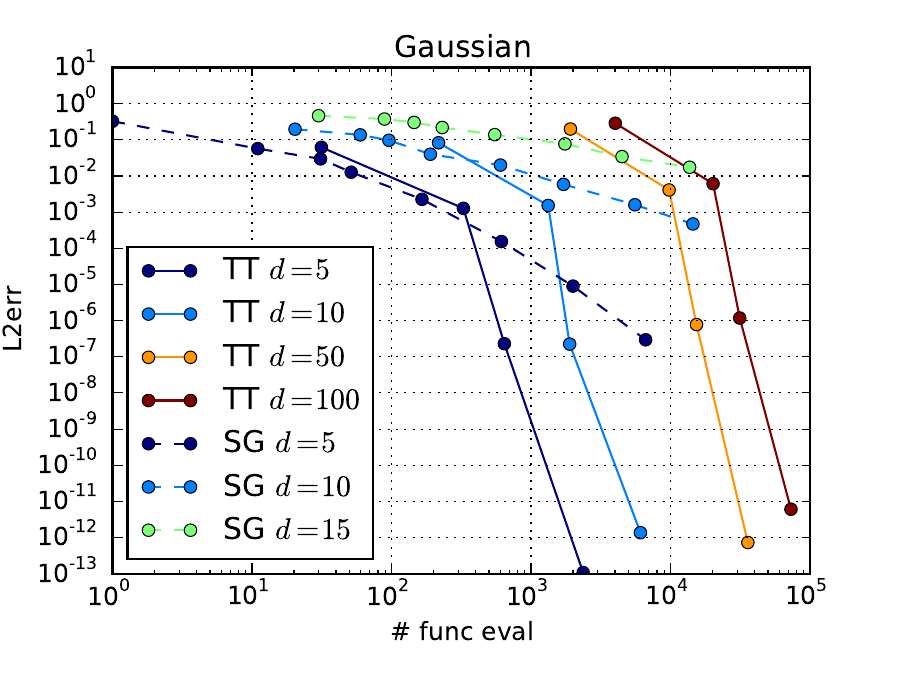}
    \label{fig:mod-genz-prj-gaussian}
  \end{subfigure}\\
  \begin{subfigure}[b]{0.48\textwidth}
    \includegraphics[width=\textwidth]{./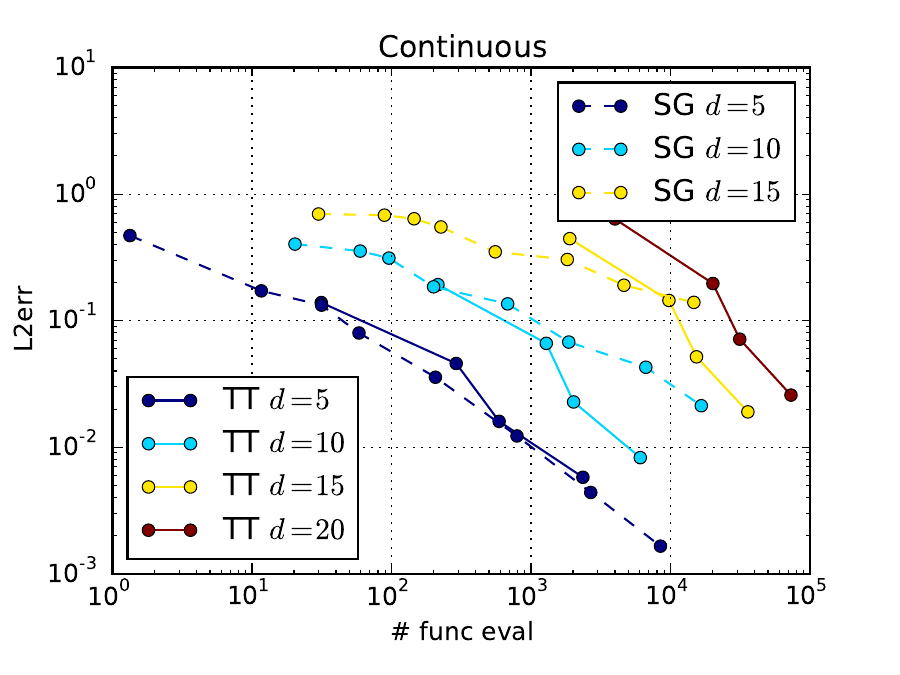}
    \label{fig:mod-genz-prj-continuous}
  \end{subfigure}
  ~
  \begin{subfigure}[b]{0.48\textwidth}
    \includegraphics[width=\textwidth]{./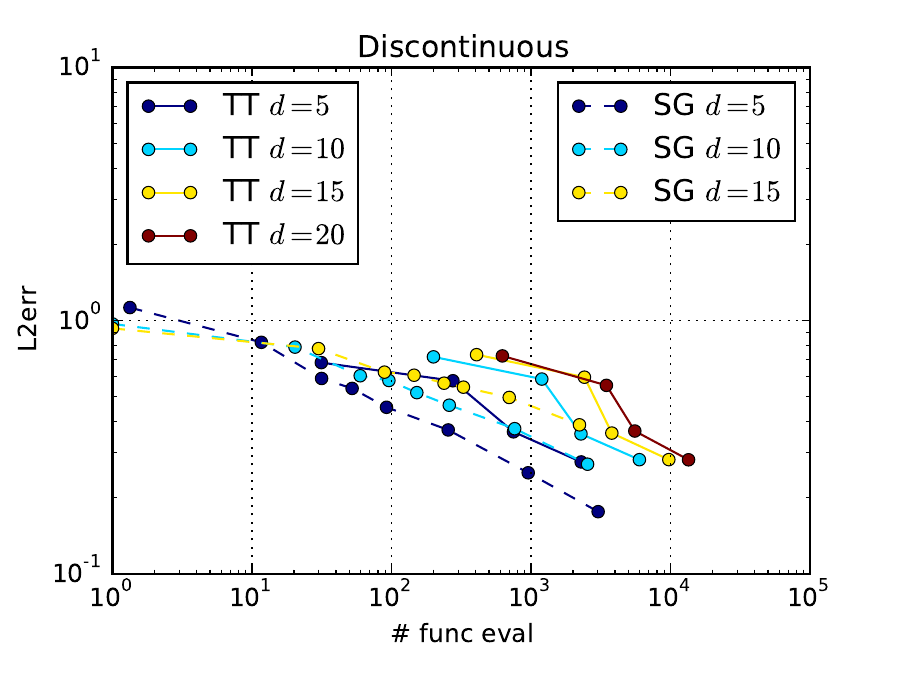}
    \label{fig:mod-genz-prj-discontinuous}
  \end{subfigure}
  \caption{\texttt{FTT-projection} approximation of the modified Genz functions. For exponentially increasing polynomial degree ($2^i-1$ for $i=1 \ldots 4$) and for varying dimensions $d$, we construct 30 realizations of each modified Genz function and evaluate the relative $L^2$ errors of their approximations. The circled dots represent the mean relative $L^2$ error and mean number of function evaluations for each polynomial degree. The figures compare the convergence rates of the \texttt{FTT-projection} and the anisotropic adaptive Smolyak algorithm \cite{Conrad2012}}
  \label{fig:mod-genz-proj}
\end{figure}
\clearpage
}

\afterpage{
\begin{figure}
  \centering
  \begin{subfigure}[b]{0.48\textwidth}
    \includegraphics[width=\textwidth]{./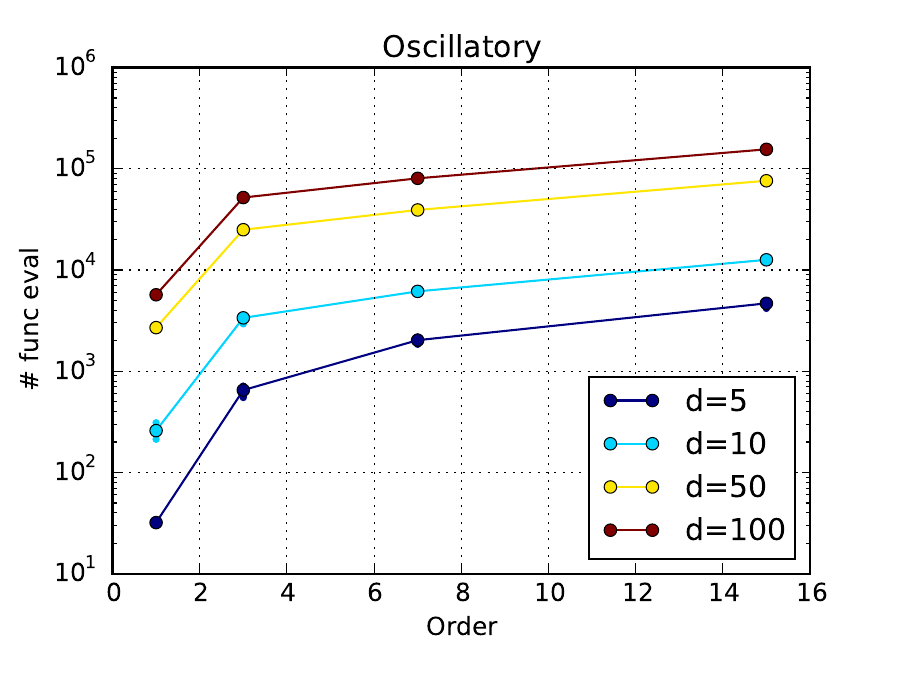}
    \label{fig:modGenz-prj-oscillatory-OrdVsFeval}
  \end{subfigure}
  ~
  \begin{subfigure}[b]{0.48\textwidth}
    \includegraphics[width=\textwidth]{./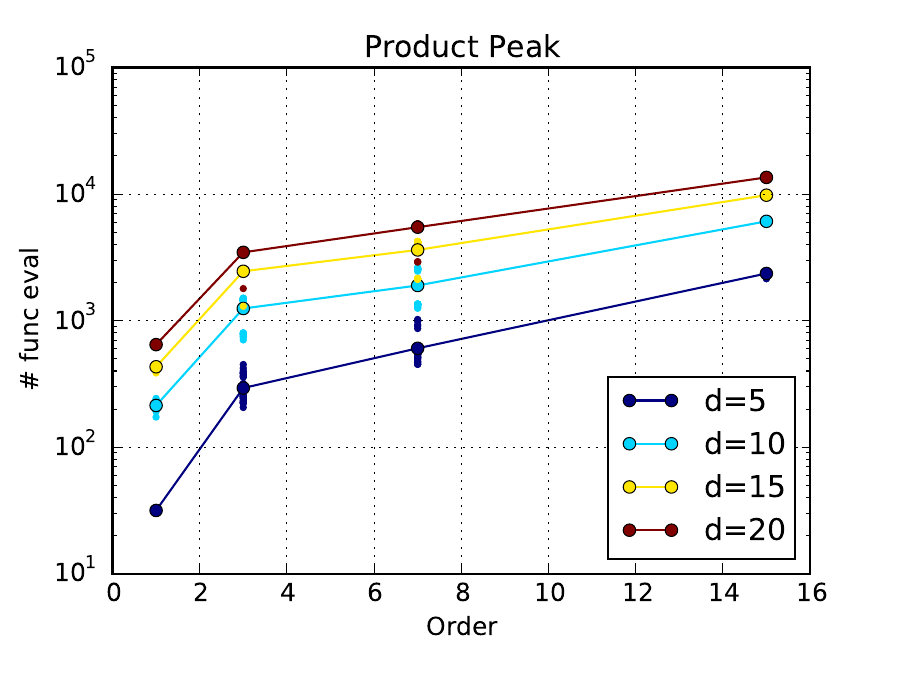}
    \label{fig:modGenz-prj-productpeak-OrdVsFeval}
  \end{subfigure}\\
  \begin{subfigure}[b]{0.48\textwidth}
    \includegraphics[width=\textwidth]{./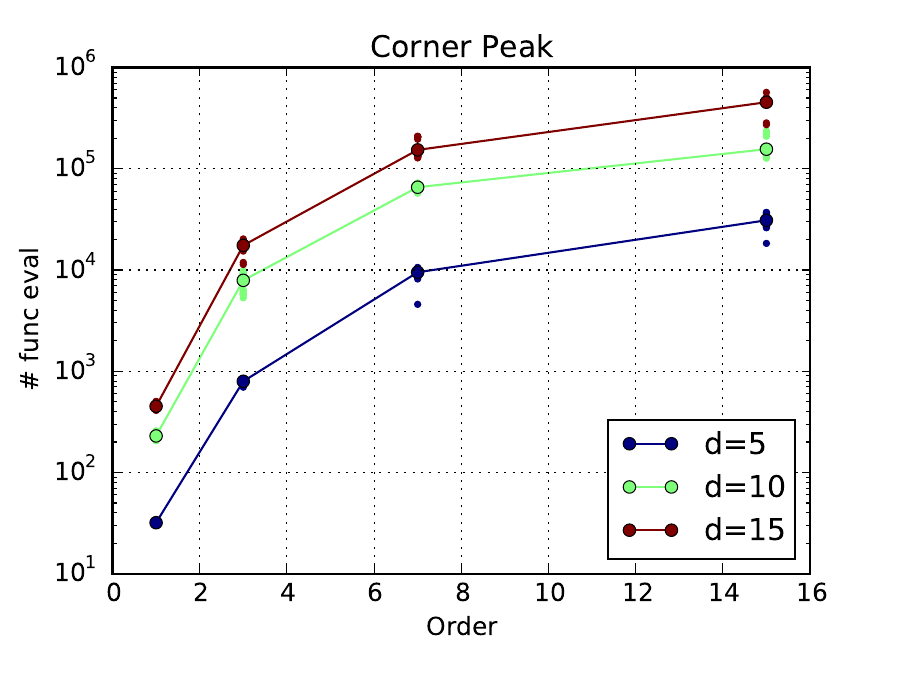}
    \label{fig:modGenz-prj-cornerpeak-OrdVsFeval}
  \end{subfigure}
  ~
  \begin{subfigure}[b]{0.48\textwidth}
    \includegraphics[width=\textwidth]{./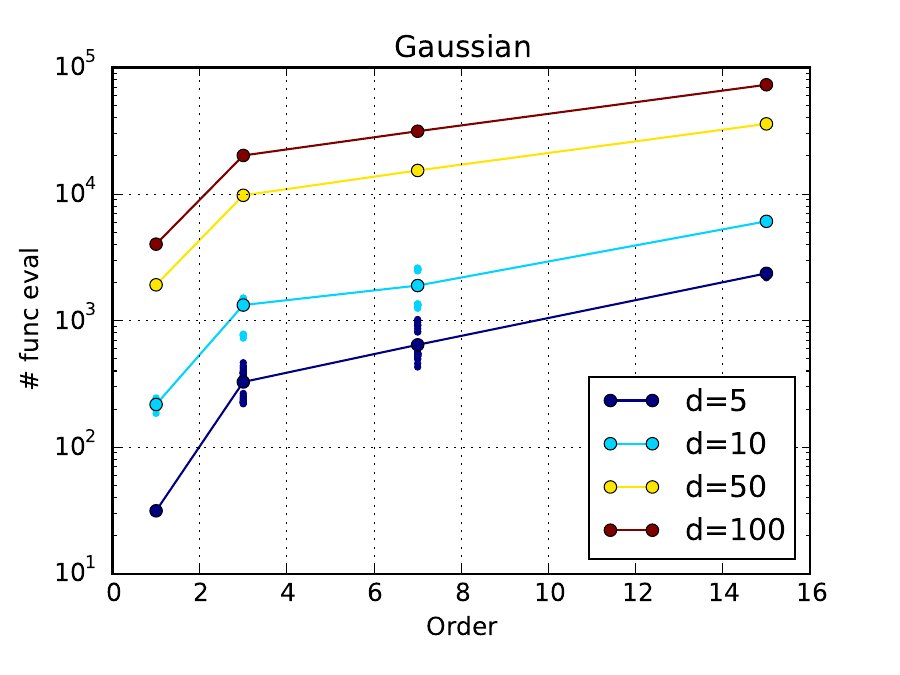}
    \label{fig:modGenz-prj-gaussian-OrdVsFeval}
  \end{subfigure}\\
  \begin{subfigure}[b]{0.48\textwidth}
    \includegraphics[width=\textwidth]{./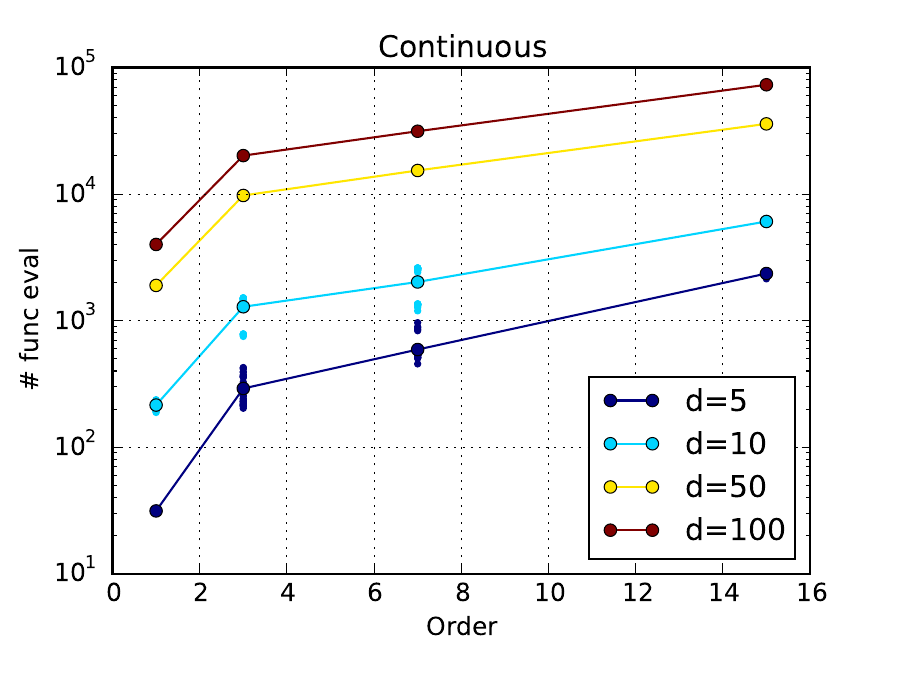}
    \label{fig:modGenz-prj-continuous-OrdVsFeval}
  \end{subfigure}
  ~
  \begin{subfigure}[b]{0.48\textwidth}
    \includegraphics[width=\textwidth]{./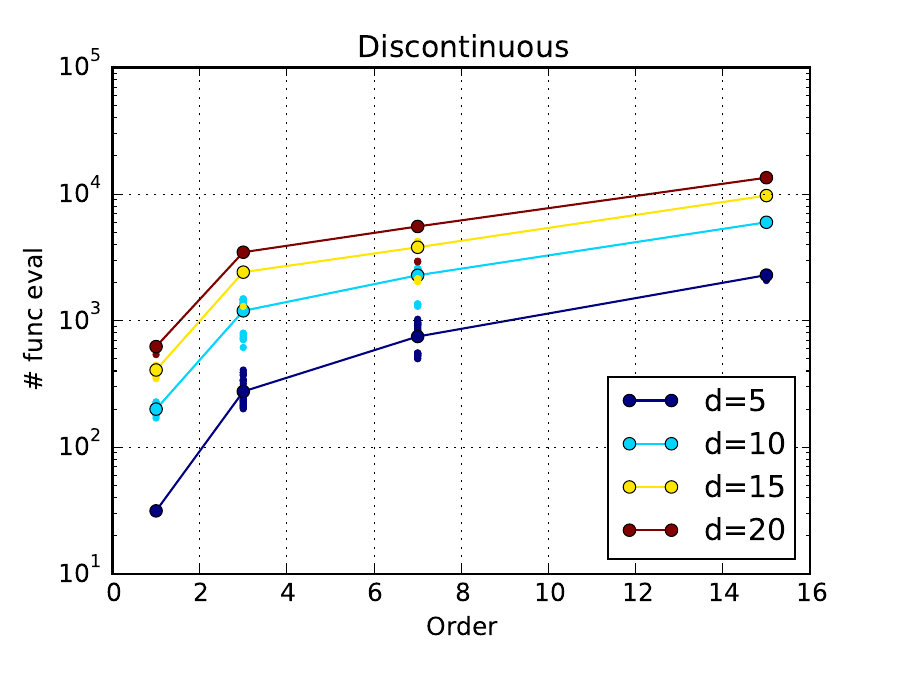}
    \label{fig:modGenz-prj-discontinuous-OrdVsFeval}
  \end{subfigure}
  \caption{\texttt{FTT-projection} approximation of the modified Genz functions. For exponentially increasing polynomial degree ($2^i-1$ for $i=1 \ldots 4$) and for varying dimensions $d$, we construct 30 realizations of each Genz function. The dots show the number of function evaluations required to construct an STT approximation of the specified polynomial degree.}
  \label{fig:modGenz-proj-OrdVsFeval}
\end{figure}
\clearpage
}

Our numerical tests consider dimensions $d$ ranging from $5$ to $100$ for functions $f_1$, $f_4$, and $f_5$. The `corner peak' function $f_3$ was tested only up to $d=15$ due to the higher computational effort required to build its approximations, as discussed below. For the `product peak' function $f_2$, we could not run tests for $d>20$ due to limited machine precision, because $f_2 \rightarrow 0$ as $d$ increases. The results are compared to approximations obtained using an anisotropic adaptive sparse grid algorithm \cite{Conrad2012}, with Gauss-Patterson quadrature rules \cite{Patterson1968}.

Figure \ref{fig:mod-genz-proj} shows the convergence of the \texttt{FTT-projection} approximation of the six modified Genz functions, for exponentially increasing polynomial degree ($N = 2^i-1$ for $i=1 \ldots 4$), isotropic across dimensions ($N_n =N$, $n=1\ldots d$), with Gauss-Legendre points/weights used for quadrature. In particular, we show the relative error of the approximations versus the number of function evaluations, for increasing polynomial degree. Figure \ref{fig:modGenz-proj-OrdVsFeval} shows the relationship between number of function evaluations and the degree of the polynomial basis, for varying dimension. The scatter of the points in the figures reflects the randomness in the coefficients of the modified Genz functions, the resulting polynomial approximation error, and the approximate fulfillment of the relative error criterion \eqref{eq:TT-SVD-error-2} by \texttt{TT-DMRG-cross}. Due to the interchangeability of the dimensions in the modified Genz functions, realizations of the error are more scattered for the lower-dimensional functions, as these functions are defined by fewer random parameters. As expected we observe a spectral convergence rate for the smooth functions $f_1$ through $f_4$. For the `continuous' modified Genz function, the convergence is only quadratic, since the function has a first-order discontinuity. Approximation of the `discontinuous' function shows very slow convergence, due to the use of a global polynomial basis for a function that is not even $\mathcal{C}^0$. 

The number of function evaluations required to achieve a given accuracy increases linearly with $d$ for functions with finite FTT-ranks that are independent of dimension (e.g., all the modified Genz functions except the `corner peak'). The absence of an exact finite-rank FTT decomposition for the `corner peak' function leads to a truncation effectively controlled by the quadrature level and the \texttt{DMRG} tolerance $\varepsilon$. This, in turn, leads to FTT approximation ranks that grow with dimension and thus a superlinear growth (in $d$) of the number of function evaluations.

The comparison to the sparse grid algorithm \cite{Conrad2012} shows dramatic improvements in performance. The convergence rate of the sparse grid algorithm analyzed is acceptable for functions of moderate dimension ($d=5$), but deteriorates considerably with increasing $d$. The convergence rate of \texttt{FTT-projection} is instead consistently better, even on the `corner peak' function where the numerical rank depends on dimension. It is important to stress that the functions analyzed here are mildly anisotropic and that the sparse grid method could perform better on more anisotropic functions. Nevertheless, very anisotropic functions are in practice effectively lower-dimensional, whereas the functions analyzed in this example are truly high-dimensional. Another important aspect of this comparison is that the computational complexity of the anisotropic \textit{adaptivity} of the sparse grid algorithm---not in terms of function evaluations, but rather algorithmic overhead---grows exponentially with dimension, because the set of active indices is defined over a high-dimensional surface. In contrast, the complexity of the \texttt{FTT-projection} algorithm grows only polynomially in terms of the dimension and the rank.

\subsubsection{\texttt{FTT-interpolation} of the modified Genz functions}

We have tested linear \texttt{FTT}-\texttt{interpolation} on all the modified Genz functions, with an exponentially increasing number of uniformly distributed points in each dimension, ranging from $2^1$ to $2^7$. For brevity, Figure \ref{fig:genz-lininterp} shows convergence results only for the `continuous' and `discontinuous' Genz functions. For the first four smooth Genz functions we observe at least second order convergence rates, as expected from the choice of a linear basis. The convergence of the  \texttt{FTT-interpolation} approximation to the `continuous' function is also second order, while the convergence rate for the `discontinuous' function is almost first order. Improved convergence for the latter, compared to Figure \ref{fig:mod-genz-proj}, is due to the local support of the selected basis functions, which prevents the discontinuity from globally corrupting the approximation.

\begin{figure}
  \centering
  \begin{subfigure}[b]{0.48\textwidth}
    \includegraphics[width=\textwidth]{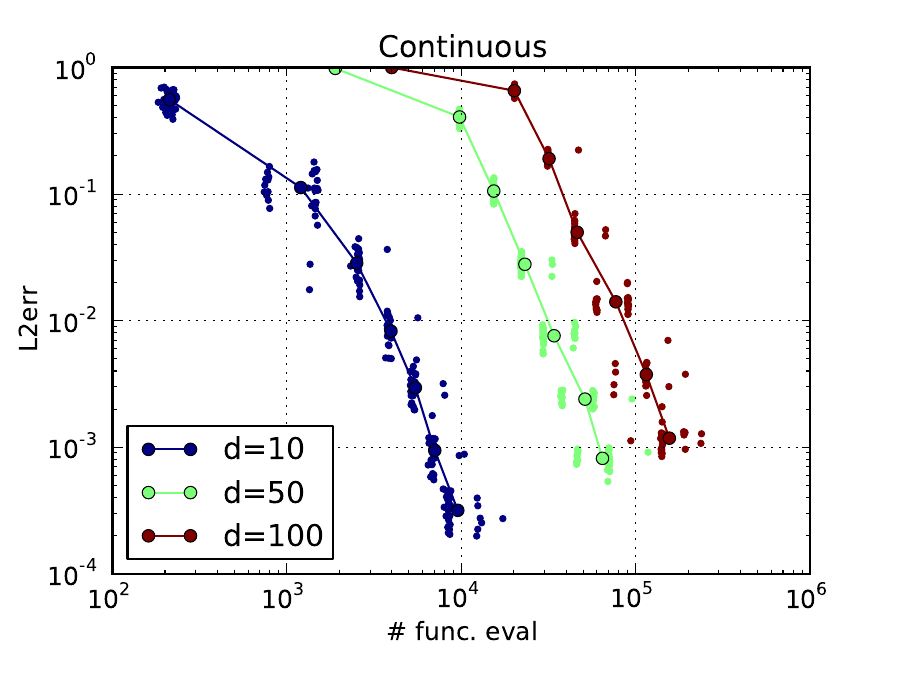}
    \label{fig:genz-lininterp-continuous}
  \end{subfigure}
  ~
  \begin{subfigure}[b]{0.48\textwidth}
    \includegraphics[width=\textwidth]{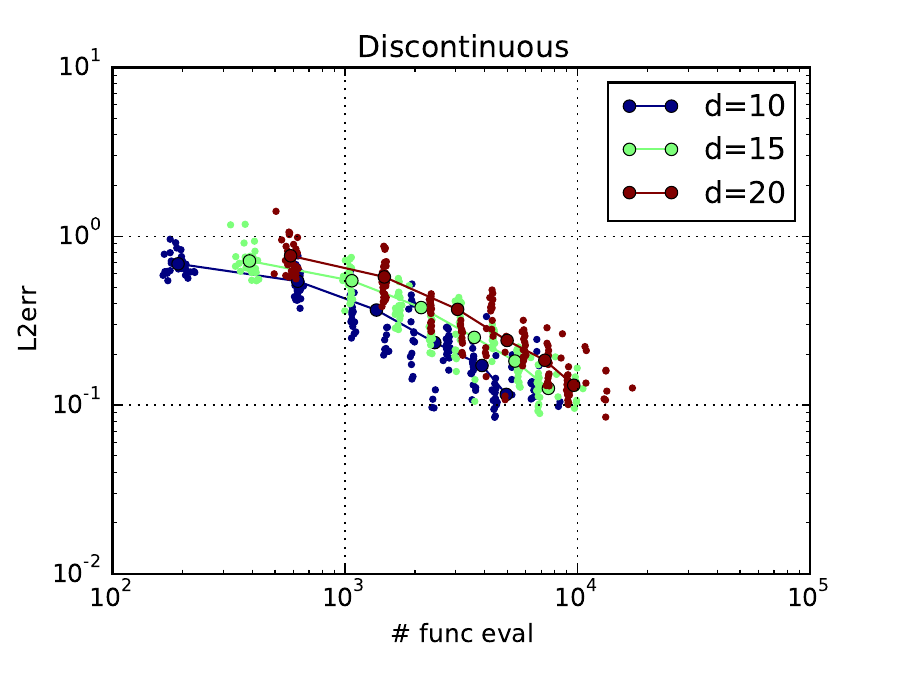}
    \label{fig:genz-lininterp-discontinuous}
  \end{subfigure}
  \caption{FTT linear interpolation of the `continuous' and `discontinuous' modified Genz functions. For exponentially numbers of uniformly distributed interpolation points ($2^1$ to $2^7$) and for varying dimensions $d$, we construct 30 realizations of each modified Genz function and evaluate the relative $L^2$ errors of their approximations. The scattered dots show the relative $L^2$ error versus the number of required function evaluations for each realization. The circled dots represent the mean relative $L^2$ error and mean number of function evaluations for each level of grid refinement.}
  \label{fig:genz-lininterp}
\end{figure}

We have also tested Lagrange \texttt{FTT-interpolation} for all the modified Genz functions; we omit the results here because they closely follow the results obtained with \texttt{FTT-projection}, already shown in Figure \ref{fig:mod-genz-proj}.

\subsection{\texttt{FTT-projection} and mixed Fourier modes}\label{sec:tens-train-proj-Fourier}

An important contrast between the STT approximation and sparse grid approximations is their behavior for mixed Fourier modes. It is well understood that sparse grid approximations are most effective for functions that are loosely coupled, i.e., that do not contain significant multiplicative terms involving several inputs at high polynomial degree. More precisely, the convergence of a sparse grid approximation deteriorates when the decay of the Fourier coefficients is slow for mixed modes.


\begin{figure}
  \centering
  \begin{subfigure}[b]{0.42\textwidth}
    \includegraphics[width=\textwidth]{./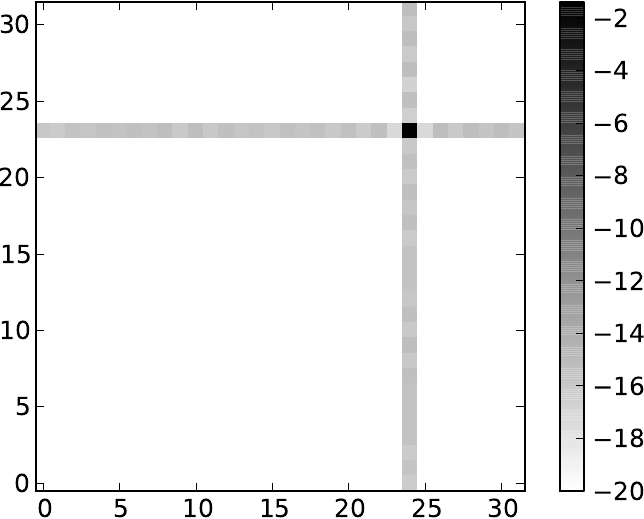}
    \caption{$f_1$: $d=2$, rank=1, fevals = $209/32^2$\\
      \indent $d=3$, rank=1, fevals = $626/32^3$\\
      \indent $d=4$, rank=1, fevals = $1210/32^4$\\
      \indent $d=5$, rank=1, fevals = $1442/32^5$}
    \label{fig:Mixed-Fourier-func-2d-f1}
  \end{subfigure}
   ~
  \begin{subfigure}[b]{0.42\textwidth}
    \includegraphics[width=\textwidth]{./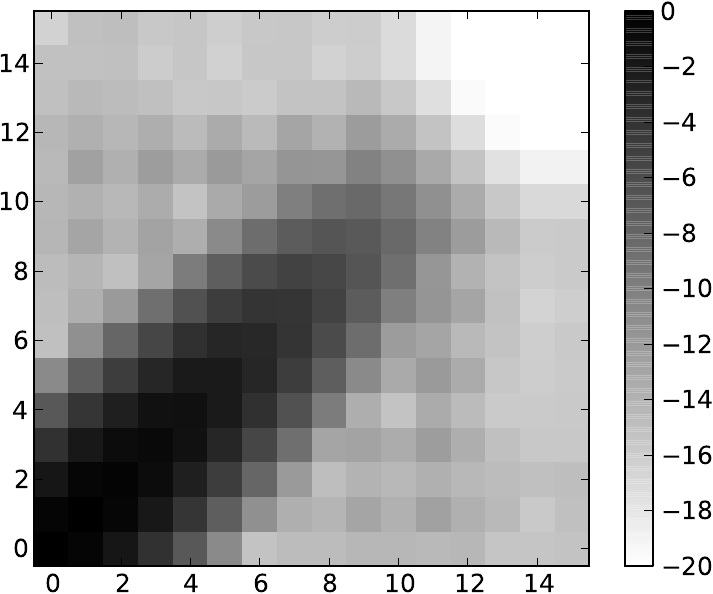}
    \caption{$f_2$: $d=2$, $J=[0,1]$, fevals = $256/16^2$\\
      \indent $d=5$, $J=[1,2]$, fevals = $3935/16^5$\\
      \indent $d=5$, $J=[0,4]$, fevals = $73307/16^5$\\
    ~}
    \label{fig:Mixed-Fourier-func-2d-f2}
  \end{subfigure}
  \caption{Magnitude of the Fourier coefficients, in $\log_{10}$ scale, for functions \eqref{eq:decay-functions}, obtained using the \texttt{TT-projection} algorithm with a tolerance of $\varepsilon=10^{-10}$. The corresponding maximum TT-rank and number of function evaluations/total grid size are listed for several dimensions $d$.}
  \label{fig:Mixed-Fourier-func-2d}
\end{figure}

We construct two ad hoc functions to highlight some properties of the \texttt{FTT-projection} when approximating functions with different decays in their Fourier coefficients. Consider functions defined on ${\bf I} = I_1 \times \cdots \times I_d$ where $I_i = [-1,1]$. 
Now consider the subset of indices $J=\{j_i\}_{i=1}^c \subseteq [1,\ldots,d]$. For every element of $J$, let $\{n_{j_i}\}_{i=1}^c>0$ be the maximum polynomial degree of the function in the $j_i$ direction. 
The functions are then defined as follows:
\begin{equation}\label{eq:decay-functions}
  \begin{aligned}
    f_1({\bf x}) &= \prod_{k=1}^c \phi_{l_k}(x_{j_k}) \;, \\
    f_2({\bf x}) &= \sum_{i_{j_1}=0}^{n_{j_1}} \cdots \sum_{i_{j_c}=0}^{n_{j_c}} \left[ \exp \left( - {\bf i}^\top {\bf \Sigma} {\bf i} \right) \prod_{k=1}^c \phi_{i_{j_k}}(x_{j_k}) \right] \;,
  \end{aligned}
\end{equation}
where ${\bf \Sigma}$ is a $c \times c$ matrix defining interactions between different dimensions, $\phi_{i}$ is the normalized univariate Legendre polynomial of degree $i$, and ${\bf i} = (i_{j_1},\ldots,i_{j_c})^\top$. To simplify the notation, we will set $n_{j_k} = n$ for all $j_k$.

The function $f_1$ has a single high-degree mixed Fourier mode as shown in Figure \ref{fig:Mixed-Fourier-func-2d-f1}; we use $d=c=2$, with $l_1 = 24$ and $l_2 =23$.  Despite this high degree, the rank of the function is correctly estimated to be one and thus very few sampling points are needed in order to achieve the required precision. The success of the STT approximation in this example highlights the fact that, unlike sparse grids, the spectral tensor-train always uses a fully tensorized set of basis functions.

The function $f_2$ is intended to have a slow decay of its mixed Fourier coefficients in the $J$ dimensions, but is constant along the remaining dimensions. For $d=2$ and $J=[0,1]$ we set 
$${\bf \Sigma}=
\left[\begin{array}{cc}
  1 & -0.9 \\
  -0.9 & 1
\end{array}\right].$$ The decay of the coefficients, as estimated using the \texttt{FTT-projection}, is shown in Figure \ref{fig:Mixed-Fourier-func-2d-f2}. The function has an high TT-rank, which leads to a complete sampling of the discrete tensor. We can also use this function  to experiment with the \textit{ordering} problem of the TT-decomposition. We let $d=5$ and use different combinations of indices in $J$. If $J$ contains two neighboring dimensions, $J=[1,2]$ in the example above, the TT ranks of the decomposition, obtained through numerical truncation, will be ${\bf r}=[1,1,11,1,1,1]$, where the maximum is attained between the cores $G_1$ and $G_2$. If instead we consider a $J$ containing non-neighboring dimensions, e.g., $J=[0,4]$ in Figure \ref{fig:Mixed-Fourier-func-2d-f2}, we obtain the same function but with reordered dimensions. Now the TT-ranks become ${\bf r}=[1,11,11,11,11,1]$. This happens due to the sequential construction of the TT-decomposition, where information can be propagated only from one core to the next. The example shows that the consequence of a poor ordering choice is an increased number of function evaluations, which grows with $r^2$. Importantly, however, this choice does not affect the accuracy of the approximation.

\subsection{Resolution of local features}
\begin{figure}
  \centering
  \begin{subfigure}[b]{0.48\textwidth}
    \includegraphics[width=\textwidth]{./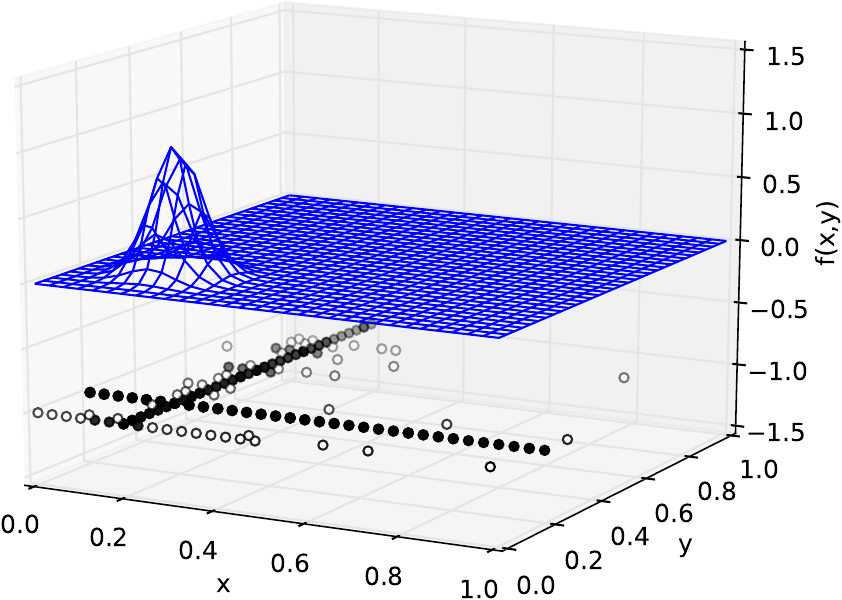}
    \caption{$d=2$, fevals = $119/32^2$}
    \label{fig:Features-2d}
  \end{subfigure}
   ~
  \begin{subfigure}[b]{0.48\textwidth}
    \includegraphics[width=\textwidth]{./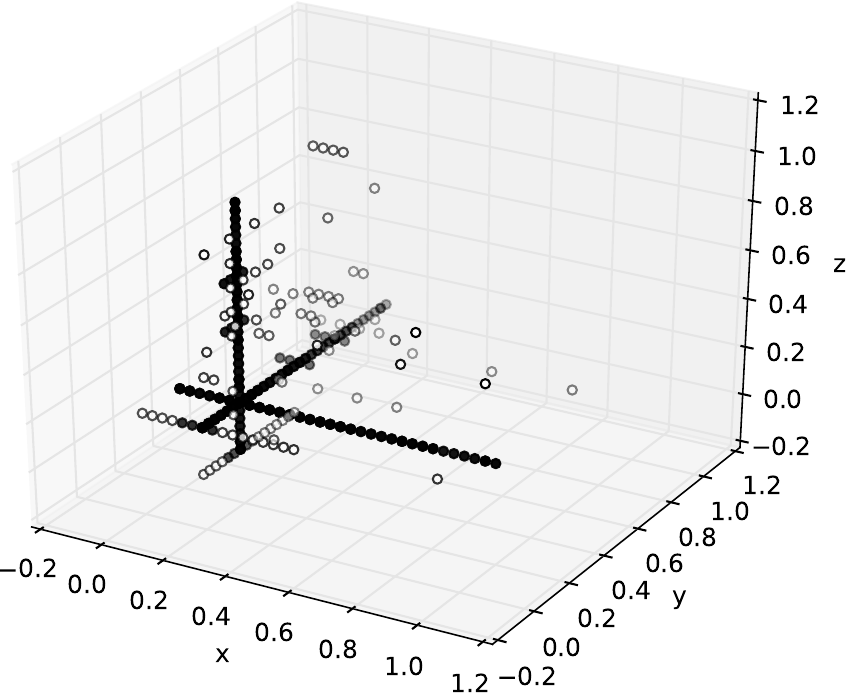}
    \caption{$d=3$, fevals = $214/32^3$}
    \label{fig:Features-3d}
  \end{subfigure}
  \caption{\texttt{TT-DMRG-cross} approximation of function \eqref{eq:local-feature}, which has a localized feature as shown in blue for $d=2$ on the left. The open and filled circles show the candidate points where the function has been evaluated. The filled circles are the points used in the final \texttt{TT-DMRG-cross} approximation.  \texttt{TT-DMRG-cross} detects the feature and clusters nodes around it in order to achieve a relative accuracy of $\varepsilon=10^{-10}$. The right figure shows the same test for $d=3$.}
  \label{fig:Features}
\end{figure}

Many functions of interest present local features that need to be resolved accurately. An \textit{a priori} clustering of nodes around a localized feature typically is not possible, because the location and shape of such a feature is unknown. The \texttt{TT-DMRG-cross} algorithm is able to overcome this problem because it adaptively selects the nodes that are relevant for the approximation, thus exploring the space with increasing knowledge about the structure of the function. As an illustrative example, consider the Gaussian bump
\begin{equation}\label{eq:local-feature}
  f({\bf x}) = \exp \left( - \frac{\vert {\bf x} - {\bf x}_0 \vert^2}{2l^2} \right) \quad .
\end{equation}
Let $d=2$, ${\bf x}_0 = (0.2,0.2)$, and $l=0.05$; the peak is thus off-center as shown in Figure \ref{fig:Features-2d}. We let $\bm{\mathcal{X}}$ be a uniform grid with 32 points per dimension and apply \texttt{TT-DMRG-cross} (with accuracy $\varepsilon=10^{-10}$) to the quantics folding of $f(\bm{\mathcal{X}})$. Open and filled circles show all the points at which the function is evaluated during iterations of \texttt{TT-DMRG-cross}. The filled circles correspond to the points selected in the last iteration.
Figure \ref{fig:Features-3d} shows the set of points used for $d=3$ and ${\bf x}_0 = (0.2,0.2,0.2)$. The same kind of clustering around the Gaussian bump is observed.

\subsection{Elliptic equation with random input data}\label{sec:elliptic-equation} 
In our final example, we approximate the solution of a linear elliptic PDE with a stochastic parameterized coefficient. Consider the Poisson equation on the unit square $\Gamma = [0,1]^2 \ni \mathbf{x}$,
\begin{equation}
  \label{eq:Poisson}
  \begin{cases}
    -\nabla \cdot \left(\kappa({\bf x},\omega) \nabla u({\bf x},\omega) \right) = f({\bf x}) & \text{in} \quad \Gamma \times \Omega \\
    u({\bf x},\omega) = 0 & \text{on} \quad \partial \Gamma \times \Omega 
  \end{cases} \; ,
\end{equation}
where $f({\bf x})=1$ is a deterministic source term and $\kappa$ is a log-normal random field defined on the probability space $(\Omega,\Sigma,\mu)$ by
\begin{equation}
  \label{eq:kappa}
  \kappa({\bf x},\omega) = \exp \left( g( {\bf x}, \omega) \right ) \;, \qquad g({\bf x},\omega) \sim \mathcal{N}\left( {\bf 0}, C_g( {\bf x}, {\bf x'} ) \right) \; .
\end{equation}
We characterize the normal random field $g \in L^2_\mu(\Omega;L^\infty(\Gamma))$ by the squared exponential covariance kernel:
\begin{equation}
  \label{eq:Covariance}
  C_g( {\bf x}, {\bf x'} ) = \int_\Omega g({\bf x},\omega) g({\bf x'},\omega) d\mu(\omega) = \sigma^2 \exp \left( - \frac{\Vert {\bf x} - {\bf x'} \Vert^2}{2l^2} \right) \;,
\end{equation}
where $l>0$ is the spatial correlation length of the field and $\sigma^2$ is a variance parameter. We decompose the random field through the Karhunen-Lo\`{e}ve (KL) expansion \cite{1978Loeve}
\begin{equation}
  \label{eq:KL-expansion}
  g({\bf x}, \omega) = \sum_{i=1}^\infty \sqrt{\lambda_i} \chi_i({\bf x}) Y_i(\omega) \;,
\end{equation}
where $Y_i\sim \mathcal{N}(0,1)$ and $\left\lbrace \lambda_i,\chi_i({\bf x}) \right\rbrace_{i=1}^\infty$ are the eigenvalues and eigenfunctions of the eigenvalue problem $\int_{\Gamma} C_g({\bf x},{\bf x'}) \chi_i({\bf x'}) d{\bf x'} = \lambda_i \chi_i({\bf x})$. The KL expansion is truncated in order to retain $95\%$ of the total variance, i.e., we find $d\in \mathbb{N}^+$ such that $\sum_{i=1}^d \lambda_i \geq 0.95 \sigma^2$. With a correlation length of $l=0.25$ and $\sigma^2 = 0.1$, this threshold requires $d=12$ terms in the KL expansion.
The use of the KL expansion allows \eqref{eq:Poisson} to be turned into a parametric problem, where we seek a solution $u \in L^2(\Gamma) \times L^2_{\mu}(\mathbb{R}^d)$. For the purpose of the current exercise, we will approximate this solution at a particular spatial location, i.e., seek approximations of $u({\bf x}_0,{\bf y})$ with ${\bf x}_0 = (0.75,0.25)$. 
%

\begin{figure}
  \centering
  \includegraphics[width=0.48\textwidth]{./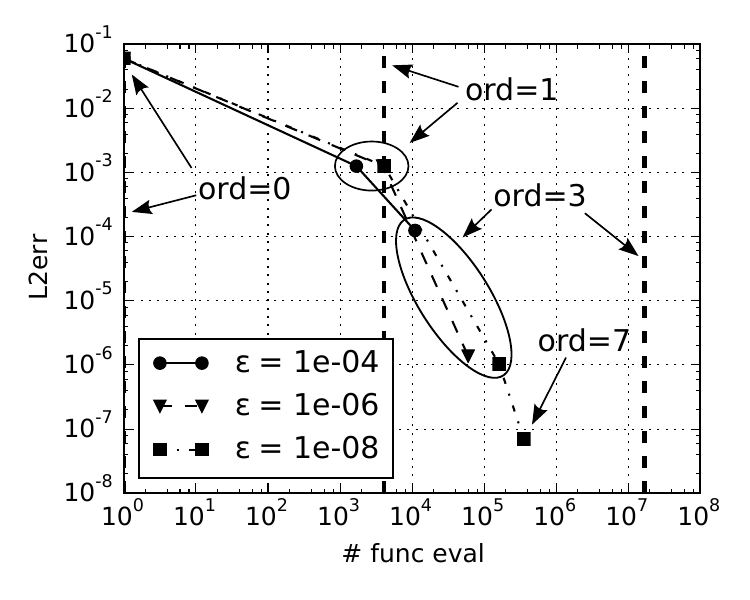}
  \caption{Convergence of the \texttt{FTT-projection} of orders 0, 1, 3, and 7 for different target accuracies selected. The vertical dashed lines show the number of function evaluations that would be required to attain a full tensor approximation.} 
  \label{fig:Poisson-convergence}
\end{figure}

We construct a surrogate using \texttt{FTT-projection} with Hermite polynomial basis functions, where $\bm{\mathcal{X}}$ is a full tensor of Gauss-Hermite quadrature points. We consider polynomial degrees of 0, 1, 3, and 7, and the corresponding tensors of size $1^d$, $2^d$, $4^d$, and $8^d$. Figure \ref{fig:Poisson-convergence} shows the convergence of our approximation in terms of the relative $L^2$ error \eqref{eq:L2err}, for different polynomial degrees and for different settings of the \texttt{TT-DMRG-cross} approximation tolerance $\varepsilon$. We see that the $L^2$ accuracy of the function approximation improves spectrally until reaching a plateau that matches $\varepsilon$ closely; beyond this plateau, an increase in the polynomial degree of the surrogate provides no further improvement, i.e., the convergence plot flattens at a relative accuracy that is $\mathcal{O}(\varepsilon)$. It is also interesting that, for a given polynomial degree and a desired relative $L^2$ error, the most efficient way of achieving this error is to choose $\varepsilon$ of the same order as this error. In other words, just as ``over-shooting'' with too high a polynomial degree is not computationally useful, it also is not useful to choose $\varepsilon$ much smaller than the desired error. These interactions suggest future work on adaptive approaches to choosing both $\varepsilon$ and anisotropic polynomial degrees. 
The vertical dashed lines in Figure~\ref{fig:Poisson-convergence} show the total number of function evaluations that would be required to evaluate a full tensor pseudospectral approximation of the given degree; as expected, for polynomial degrees larger than one,  \texttt{FTT-projection} requires many orders of magnitude fewer function evaluations than a full tensor approach.


\section{Conclusions}


This paper presents a rigorous construction of the spectral tensor-train (STT) decomposition for multivariate functions. The method aims to mitigate the {curse of dimensionality} for functions with sufficient regularity, by constructing approximations that exploit low tensor rank and that can attain spectral rates of convergence. We present an iterative procedure for decomposing an arbitrary function $f \in L^2_\mu({\bf I})$, yielding a format termed the \textit{functional} tensor-train (FTT) decomposition (to distinguish it from the TT decomposition of discrete tensors). The construction of the FTT decomposition relies on the singular value decomposition of Hilbert-Schmidt kernels in $L^2_\mu({\bf I})$ and on the regularity properties of $f$ (cf.\ Theorem \ref{thm:ftt-approx-conv}). This regularity is inherited by the singular functions or ``cores'' of the decomposition (cf.\ Theorems \ref{thm:ftt-sobolev} and \ref{thm:regularity}).
We then develop error bounds that account for truncation of the FTT decomposition at a given rank and for polynomial approximation of the cores. Collectively, these theoretical results describe the connections between Sobolev regularity of $f$, the dimension of the input space, and approximation rates in terms of tensor rank. 

To implement the spectral tensor-train decomposition numerically, we apply the \texttt{TT-DMRG-cross} sampling algorithm \cite{Savostyanov2011} to a discrete tensor comprising suitably weighted pointwise evaluations of $f$; the definition of this tensor reflects a choice of tensor-product quadrature rule. The user is required to select the polynomial degree of the approximation and the desired relative accuracy. The latter tolerance drives the extent of dimensional interactions described by the approximation and ultimately the number of function evaluations demanded by the algorithm. 
Numerical experiments demonstrate good performance of this approximation. For analytically low-rank functions, empirical results confirm that computational effort (i.e., the number of function evaluations required to achieve a given accuracy) scales linearly with dimension. Even for functions that are not analytically low rank, we observe that the STT approximation significantly outperforms an adaptive sparse grid approach. Recall that the FTT approximation is nonlinear in the sense that it does not prescribe a basis for the separation of the space $L^2_\mu({\bf I})$; instead, it uses the singular functions of $f$, which are optimal. The choice of basis is made when projecting the singular functions onto, for example, a finite-degree polynomial space. This approach also offers the flexibility needed to resolve local features of a function, by clustering the evaluation points close to the feature.


%



Many avenues for further development center on adaptivity. For example, the ordering of the dimensions can have an important impact on the number of function evaluations required to produce an STT approximation; finding an optimal or near-optimal ordering \textit{a priori} or adaptively is a topic of ongoing work. Results from the current work can also pave the way towards a fully adaptive STT decomposition, using the smoothness properties of the singular functions to indicate whether to increase the polynomial degree in each dimension. This will allow a more complete automation of the construction process. Further theoretical developments relating the discrete and functional representations would also be of great interest: for example, describing the relationship between cross-interpolation error and the pointwise approximation of the FTT cores. It would also be useful to extend current results on the convergence of the FTT decomposition to unbounded domains (e.g., $\mathbb{R}^d$) equipped with finite measure. These efforts are left to future work. 

An open-source Python implementation of the STT approximation algorithm including all the numerical examples from this paper is available at \url{http://pypi.python.org/pypi/TensorToolbox/}.

\section*{Acknowledgments}
The authors would like to thank Jan Hesthaven, Alessio Spantini, Florian Augustin, and Patrick Conrad for fruitful discussions on this topic and for providing many useful comments on the paper. We would also like to thank Dmitry Savostyanov for calling our attention to the \texttt{TT-DMRG-cross} algorithm\rvnote*{\#1-1}{, and the anonymous referee who suggested an important improvement to Theorem \ref{thm:ftt-approx-conv}.} D.\ Bigoni acknowledges the financial support of DTU Compute for his initial visit to MIT. D.\ Bigoni and Y.\ Marzouk also acknowledge support from the US Department of Energy, Office of Science, Advanced Scientific Computing Research under award number DE-SC0007099. 

\appendix

\section{H\"older continuity and the Smithies condition}\label{sec:hold-cont-smith}
In Section \ref{sec:regul-ftt-decomp} we use a result by Smithies \cite[Thm.~14]{Smithies1937} to prove the boundedness of the weak derivatives of the cores of the FTT decomposition. The conditions under which Smithies' result holds are as follows:
\smallskip
\begin{definition}[Smithies' integrated H\"older continuity]
\label{def:integratedholder}
  Let $K(s,t)$ be defined for $s,t\in[a,b]$. Without loss of generality, let $a=0$ and $b=\pi$. For $r>0$, let
  \begin{equation}
    K^{(i)}(s,t) = \frac{\partial^i K(s,t)}{\partial s^i} \; , \qquad 0 < i \leq r,
  \end{equation}
  and let $K^{(1)},\ldots,K^{(r-1)}$ exist and be continuous. Let $K^{(r)} \in L^p(s)$ a.e.\ in $t$ for $1 < p \leq 2$. Then integrated H\"older continuity, with either $r>0$ and $\alpha>0$ or $r=0$ and $\alpha>\frac{1}{p}-\frac{1}{2}$, holds for $K$ if and only if there exists an $A>0$ such that:
  \begin{equation}
    \label{eq:int-Holder}
    \int_0^\pi \left\{ \int_0^\pi \left\vert K^{(r)}(s+\theta,t) - K^{(r)}(s-\theta,t) \right\vert^p ds \right\}^{\frac{2}{p}} dt \leq A \vert \theta \vert^{2\alpha} \;.
  \end{equation}
\end{definition}

This definition somewhat difficult to interpret. Furthermore, in the scope of this work, we are interested in the case $r=0$. A simpler, but not equivalent, definition is given in \cite{TOWNSEND2013}:
\smallskip
\begin{definition}[H\"older continuity almost everywhere]
  Let $K(s,t)$ be defined for $s,t\in[a,b]$. $K$ is H\"older continuous a.e.\ with exponent $\alpha>0$ if there exists $C>0$ such that
  \begin{equation}
    \label{eq:Holder-ae}
    \vert K(s+\theta,t) - K(s-\theta,t) \vert \leq C \vert \theta \vert^\alpha
  \end{equation}
  almost everywhere in $t$.
\end{definition}
\smallskip

To clarify the connection between these notions, we will show that:
\begin{proposition}
  H\"older continuity a.e.\ is a sufficient condition for the integrated H\"older continuity given in Definition~\ref{def:integratedholder}.
\end{proposition}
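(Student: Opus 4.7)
The plan is to show the implication directly by a straightforward chain of integrations, exploiting the fact that the pointwise H\"older bound $|K(s+\theta,t) - K(s-\theta,t)| \leq C|\theta|^\alpha$ is uniform in $s$ and depends on $t$ only through the a.e.\ qualifier. In other words, I would first fix $t$ outside the null set on which the H\"older bound fails, so that the inequality is available for every $s \in [0,\pi]$ and every $\theta$ in the admissible range.

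For such a $t$, the next step is to raise the inequality to the power $p$ (with $1 < p \leq 2$) and integrate over $s \in [0,\pi]$:
\begin{equation*}
  \int_0^\pi |K(s+\theta,t) - K(s-\theta,t)|^p \, ds \;\leq\; \pi\, C^p |\theta|^{p\alpha}.
\end{equation*}
Raising this to the power $2/p$ yields the bound $\pi^{2/p} C^2 |\theta|^{2\alpha}$, which, crucially, no longer depends on $t$. Integrating over $t \in [0,\pi]$ (the null set contributes nothing), I obtain
\begin{equation*}
  \int_0^\pi \left\{ \int_0^\pi |K(s+\theta,t) - K(s-\theta,t)|^p \, ds \right\}^{2/p} dt \;\leq\; \pi^{1+2/p} C^2 |\theta|^{2\alpha},
\end{equation*}
so Smithies' condition \eqref{eq:int-Holder} holds with constants $r = 0$, the same exponent $\alpha$, and $A := \pi^{1+2/p} C^2$. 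The required condition $\alpha > 1/p - 1/2$ in Smithies' definition is a constraint on $\alpha$ itself, not on the implication, and does not affect the derivation.

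There is essentially no hard step here: the whole content of the proposition is that the pointwise a.e.\ H\"older bound is strictly stronger than the integrated one, because the right-hand side $C|\theta|^\alpha$ is independent of $s$ and $t$ and the underlying intervals have finite length $\pi$. The only mild subtlety is purely notational, namely verifying that ``almost everywhere in $t$'' in the Townsend-style definition gives a bound valid for \emph{every} $s$ and a.e.\ $t$, which is exactly what is needed so that the inner $ds$-integral can be controlled uniformly before the outer $dt$-integration. Once this is spelled out, the argument reduces to the two displayed inequalities above.
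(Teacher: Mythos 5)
Your proof is correct and follows essentially the same route as the paper's: raise the pointwise H\"older bound to the power $p$, integrate in $s$, take the $2/p$ power, and integrate in $t$, using that the bound is uniform in $s$ and independent of $t$ off a null set. (Your constant $\pi^{1+2/p}$ is in fact the more careful computation; the paper writes $\pi^{3/p}$, but both are dominated by $\pi^{3}$ for $1<p\leq 2$, so nothing of substance differs.)
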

\begin{proof}
  Let $K \in L^p(s)$ for almost all $t$, $1 < p \leq 2$. For $\alpha > \frac{1}{2}$, let $K$ be H\"older continuous a.e.\ in $t$. Then:
\begin{align*}
      \int_0^\pi \left\{ \int_0^\pi \left\vert K^{(r)}(s+\theta,t) - K^{(r)}(s-\theta,t) \right\vert^p ds \right\}^{\frac{2}{p}} dt  
& \leq \int_0^\pi \left\{ \int_0^\pi C^p \left\vert \theta \right\vert^{\alpha p} ds \right\}^{\frac{2}{p}} dt \\
& = C^2 \pi^{\frac{3}{p}} \left\vert \theta \right\vert^{2\alpha} \\
&\leq C^2 \pi^{3} \left\vert \theta \right\vert^{2\alpha} = A \left\vert \theta \right\vert^{2\alpha} \; ,
\end{align*}
where we recognize the bound \eqref{eq:int-Holder} of the Smithies integrated H\"older continuity condition. \hfill \end{proof}

\section{Proofs of auxiliary results for Theorem \ref{thm:ftt-approx-conv}}\label{sec:sobol-cont-kern}

\subsection{Proof of Lemma \ref{lemma:SobolevSemiVsNorm}}
  By definition of Sobolev norm, seminorm and weak derivative $D^{\bf i}$:
  \begin{equation}
    \label{eq:ProofSobolevSemiVSNorm-1}
    \begin{aligned}
      \vert J \vert^2_{I_1\times I_1,\mu,k} &\leq \Vert J \Vert^2_{\mathcal{H}^k_\mu(I_1 \times I_1)} = \sum^k_{\vert {\bf i} \vert = 0} \Vert D^{\bf i} \langle f(x,y), f(\bar{x},y) \rangle_{L^2_\mu(\bar{\bf I})} \Vert^2_{L^2_\mu(I_1\times I_1)} \\
      &= \sum^k_{\vert {\bf i} \vert = 0} \Vert \langle D^{i_1,{\bf 0}}f(x,y), D^{i_2,{\bf 0}}f(\bar{x},y) \rangle_{L^2_\mu(\bar{\bf I})} \Vert^2_{L^2_\mu(I_1\times I_1)} \; ,
    \end{aligned}
  \end{equation}
  where ${\bf i}$ is a two dimensional multi-index.
  Using the Cauchy-Schwarz inequality, it holds that:
  \begin{equation}
    \label{eq:ProofSobolevSemiVSNorm-2}
    \Vert \langle D^{i_1,{\bf 0}}f(x,y), D^{i_2,{\bf 0}}f(\bar{x},y) \rangle_{L^2_\mu(\bar{\bf I})} \Vert^2_{L^2_\mu(I_1\times I_1)} \leq \Vert D^{i_1,{\bf 0}}f(x,y) \Vert^2_{L^2_\mu({\bf I})} \Vert D^{i_2,{\bf 0}}f(x,y) \Vert^2_{L^2_\mu({\bf I})}
  \end{equation}
Now let ${\bf j}$ and ${\bf l}$ be two $d$-dimensional multi-indices. Then \eqref{eq:ProofSobolevSemiVSNorm-1} can be bounded by
  \begin{equation}
    \label{eq:ProofSobolevSemiVSNorm-3}
    \begin{aligned}
      \vert J \vert^2_{I_1\times I_1,\mu,k} &\leq \Vert J \Vert^2_{\mathcal{H}^k_\mu(I_1 \times I_1)} \leq \sum^k_{\vert {\bf i} \vert = 0} \Vert D^{i_1,{\bf 0}}f(x,y) \Vert^2_{L^2_\mu({\bf I})} \Vert D^{i_2,{\bf 0}}f(x,y) \Vert^2_{L^2_\mu({\bf I})} \\
      &\leq \sum^k_{\vert {\bf j} \vert = 0} \sum^k_{\vert {\bf l} \vert = 0} \Vert D^{\bf j}f(x,y) \Vert^2_{L^2_\mu({\bf I})} \Vert D^{\bf l}f(x,y) \Vert^2_{L^2_\mu({\bf I})} \leq \Vert f \Vert^4_{\mathcal{H}^k_\mu({\bf I})} \;.
    \end{aligned}
  \end{equation}
  Since $\Vert J \Vert_{\mathcal{H}^k_\mu(I_1 \times I_1)} \leq \Vert f \Vert^2_{\mathcal{H}^k_\mu({\bf I})} < \infty $ by assumption, then $ J \in \mathcal{H}^k_\mu(I_1 \times I_1) $.
\hfill $\square$
 

\subsection{Proof of Lemma \ref{lemma:SobolevPhiVsF}}
\rvnote*{\#1-1}{We prove the statement for the first dimension; the other dimensions will follow in a similar fashion. For a particular multi-index ${\bf i}=[i_1,\ldots,i_d]$, let ${\bf j}:=[i_2,\ldots,i_d]$. Let also ${\bf I}=I_1\times \cdots \times I_d$ and $\tilde{\bf I}=I_2\times \cdots \times I_d$. Then
\begin{equation}
  \label{eq:SobolevPhiVsF-1}
  \begin{aligned}
    \left\Vert f \right\Vert^2_{\mathcal{H}_\mu^k({\bf I})} &= \sum_{\vert{\bf i}\vert = 0}^k \left\Vert D^{\bf i} f \right\Vert^2_{L_\mu^2({\bf I})} = \sum_{\substack{\vert{\bf i}\vert = 0\\i_1=0}}^k \left\Vert D^{\bf i} f \right\Vert^2_{L_\mu^2({\bf I})} + \sum_{\substack{\vert{\bf i}\vert = 0\\i_1>0}}^k \left\Vert D^{\bf i} f \right\Vert^2_{L_\mu^2({\bf I})} \\
    &= \sum_{\substack{\vert{\bf i}\vert = 0\\i_1=0}}^k \left\Vert D^{\bf i} \sum_{\alpha_1=1}^\infty \sqrt{\lambda_1(\alpha_1)} \gamma_1(x_1;\alpha_1) \varphi_1(\alpha_1;x_2,\ldots,x_d) \right\Vert^2_{L_\mu^2({\bf I})} + \sum_{\substack{\vert{\bf i}\vert = 0\\i_1>0}}^k \left\Vert D^{\bf i} f \right\Vert^2_{L_\mu^2({\bf I})} \\
    &= \sum_{\substack{\vert{\bf i}\vert = 0\\i_1=0}}^k \sum_{\alpha_1=1}^\infty \lambda_1(\alpha_1) \left\Vert \gamma_1(\cdot;\alpha_1) \right\Vert^2_{L_\mu^2(I_1)} \left\Vert D^{\bf j} \varphi_1(\alpha_1;\cdot) \right\Vert^2_{L_\mu^2(\tilde{\bf I})} + \sum_{\substack{\vert{\bf i}\vert = 0\\i_1>0}}^k \left\Vert D^{\bf i} f \right\Vert^2_{L_\mu^2({\bf I})} \\
    &= \sum_{\alpha_1=1}^\infty \left\Vert \left(\sqrt{\lambda_1}\varphi_1\right)(\alpha_1;\cdot) \right\Vert^2_{\mathcal{H}_\mu^k(\tilde{\bf I})} + \sum_{\substack{\vert{\bf i}\vert = 0\\i_1>0}}^k \left\Vert D^{\bf i} f \right\Vert^2_{L_\mu^2({\bf I})} \;,
  \end{aligned}
\end{equation}
where the third equality was obtained using the orthonormality of $\{\gamma_1(\cdot;\alpha_1) \}_{\alpha_1=1}^\infty$ and the H\"older
$(\alpha > 1/2)$ continuity of $f$, as in the proof of Theorem \ref{thm:ftt-sobolev}. \hfill $\square$}

\bibliographystyle{siam}
\bibliography{biblio}

\begin{thebibliography}{10}

\bibitem{Barthelmann2000}
{\sc V.~Barthelmann, E.~Novak, and K.~Ritter}, {\em {High dimensional
  polynomial interpolation on sparse grids}}, Advances in Computational
  Mathematics, 12 (2000), pp.~273--288.

\bibitem{Bernardi1992}
{\sc C.~Bernardi and Y.~Maday}, {\em {Polynomial interpolation results in
  Sobolev spaces}}, Journal of computational and applied mathematics,  (1992).

\bibitem{boyd2001chebyshev}
{\sc J.~P. Boyd}, {\em {Chebyshev and Fourier Spectral Methods: Second Revised
  Edition}}, Dover Books on Mathematics, Dover Publications, New York, 2~ed.,
  2001.

\bibitem{Brenner2008}
{\sc S.~C. Brenner and L.~R. Scott}, {\em {The Mathematical Theory of Finite
  Element Methods}}, vol.~15 of Texts in Applied Mathematics, Springer New
  York, New York, NY, 2008.

\bibitem{Bro1998}
{\sc R.~Bro}, {\em {Multi-way analysis in the food industry: models,
  algorithms, and applications}}, PhD thesis, Universiteit van Amsterdam, 1998.

\bibitem{Canuto2006}
{\sc C.~Canuto, M.~Y. Hussaini, A.~Quarteroni, and T.~A. Zang}, {\em {Spectral
  Methods - Fundamentals in Single Domains}}, Scientific Computation, Springer
  Berlin Heidelberg, Berlin, Heidelberg, 2006.

\bibitem{Carroll1970}
{\sc J.~D. Carroll and J.~J. Chang}, {\em {Analysis of individual differences
  in multidimensional scaling via an N-way generalization of ``Eckart-Young''
  decomposition}}, Psychometrika, 35 (1970).

\bibitem{Civril2009}
{\sc A.~\c{C}ivril and M.~Magdon-Ismail}, {\em {On selecting a maximum volume
  sub-matrix of a matrix and related problems}}, Theoretical Computer Science,
  410 (2009), pp.~4801--4811.

\bibitem{Chinesta2014}
{\sc F.~Chinesta, R.~Keunings, and A.~Leygue}, {\em {The Proper Generalized
  Decomposition for Advanced Numerical Simulations}}, SpringerBriefs in Applied
  Sciences and Technology, Springer International Publishing, Cham, 2014.

\bibitem{Conrad2012}
{\sc P.~R. Conrad and Y.~M. Marzouk}, {\em {Adaptive Smolyak Pseudospectral
  Approximations}}, SIAM Journal on Scientific Computing, 35 (2013),
  pp.~A2643--A2670.

\bibitem{Constantine2012}
{\sc P.~G. Constantine, M.~S. Eldred, and E.~T. Phipps}, {\em {Sparse
  pseudospectral approximation method}}, Computer Methods in Applied Mechanics
  and Engineering, 229-232 (2012), pp.~1--12.

\bibitem{Silva2008}
{\sc V.~De~Silva and L.~H. Lim}, {\em {Tensor rank and the ill-posedness of the
  best low-rank approximation problem}}, SIAM Journal on Matrix Analysis and
  Applications,  (2008), pp.~1--44.

\bibitem{Dolgov2014}
{\sc S.~Dolgov, B.~N. Khoromskij, A.~Litvinenko, and H.~G. Matthies}, {\em
  {Computation of the Response Surface in the Tensor Train data format}}, arXiv
  preprint arXiv:1406.2816,  (2014), p.~28.

\bibitem{Doostan2009}
{\sc A.~Doostan and G.~Iaccarino}, {\em {A least-squares approximation of
  partial differential equations with high-dimensional random inputs}}, Journal
  of Computational Physics, 228 (2009), pp.~4332--4345.

\bibitem{Doostan2013}
{\sc A.~Doostan, A.~Validi, and G.~Iaccarino}, {\em {Non-intrusive low-rank
  separated approximation of high-dimensional stochastic models}}, Computer
  Methods in Applied Mechanics and Engineering, 263 (2013), pp.~42--55.

\bibitem{Espig2014}
{\sc M.~Espig, W.~Hackbusch, A.~Litvinenko, H.~G. Matthies, and
  P.~W\"{a}hnert}, {\em {Efficient low-rank approximation of the stochastic
  Galerkin matrix in tensor formats}}, Computers \& Mathematics with
  Applications, 67 (2014), pp.~818--829.

\bibitem{Gautschi2004}
{\sc W.~Gautschi}, {\em {Orthogonal Polynomials: Computation and
  Approximation}}, Numerical Mathematics and Scientific Computation, Oxford
  University Press, 2004.

\bibitem{genz1983a}
{\sc A.~Genz}, {\em {Testing multidimensional integration routines}}, Proc. of
  international conference on Tools, methods and languages for scientific and
  engineering computation,  (1984).

\bibitem{Genz1987}
\leavevmode\vrule height 2pt depth -1.6pt width 23pt, {\em {A package for
  testing multiple integration subroutines}}, Numerical Integration,  (1987).

\bibitem{Giraldi2013}
{\sc L.~Giraldi, A.~Litvinenko, D.~Liu, H.~G. Matthies, and A.~Nouy}, {\em {To
  be or not to be intrusive? The solution of parametric and stochastic
  equations - the "plain vanilla" Galerkin case}}, arXiv: 1309.1617,  (2013),
  pp.~1--24.

\bibitem{Golub1969}
{\sc G.~H. Golub and J.~H. Welsch}, {\em {Calculation of Gauss quadrature
  rules}}, Mathematics of Computation,  (1969), pp.~221--230.

\bibitem{Goreinov2010}
{\sc S.~Goreinov and I.~V. Oseledets}, {\em {How to find a good submatrix}}, in
  Matrix methods: Theory, Algorithms and Applications, World Scientific
  Publishing, Co. Pte., Ltd., Singapore, 2010, pp.~247--256.

\bibitem{Goreinov1997}
{\sc S.~A. Goreinov, N.~L. Zamarashkin, and E.~E. Tyrtyshnikov}, {\em
  {Pseudo-skeleton approximations by matrices of maximal volume}}, Mathematical
  Notes, 62 (1997), pp.~619--623.

\bibitem{Grasedyck2010}
{\sc L.~Grasedyck}, {\em {Hierarchical singular value decomposition of
  tensors}}, SIAM Journal on Matrix Analysis and Applications, 31 (2010),
  pp.~2029--2054.

\bibitem{Grasedyck2013}
{\sc L.~Grasedyck, D.~Kressner, and C.~Tobler}, {\em {A literature survey of
  low-rank tensor approximation techniques}}, arXiv preprint arXiv:1302.7121,
  (2013), pp.~1--20.

\bibitem{Hackbusch2012}
{\sc W.~Hackbusch}, {\em {Tensor Spaces and Numerical Tensor Calculus}},
  vol.~42 of Springer Series in Computational Mathematics, Springer Berlin
  Heidelberg, Berlin, Heidelberg, 2012.

\bibitem{Hackbusch2009}
{\sc W.~Hackbusch and S.~K{\"{u}}hn}, {\em {A new scheme for the tensor
  representation}}, Journal of Fourier Analysis and Applications, 15 (2009),
  pp.~706--722.

\bibitem{halmos1978bounded}
{\sc P.~R. Halmos and V.~S. Sunder}, {\em {Bounded integral operators on L2
  spaces}}, Ergebnisse der Mathematik und ihrer Grenzgebiete, Springer-Verlag,
  1978.

\bibitem{Hammerstein1923}
{\sc A.~Hammerstein}, {\em {\"{U}ber die Entwickelung des Kernes linearer
  Integralgleichu ngen nach Eigenfunktionen}}, Sitzungsberichte Preuss. Akad.
  Wiss.,  (1923), pp.~181--184.

\bibitem{Hardy1927}
{\sc G.~H. Hardy and J.~E. Littlewood}, {\em {Some new properties of Fourier
  constants}}, Mathematische Annalen, 97 (1927), pp.~159--209.

\bibitem{Harshman1970}
{\sc R.~A. Harshman}, {\em {Foundations of the PARAFAC procedure: models and
  conditions for an" explanatory" multimodal factor analysis}}, UCLA Working
  Papers in Phonetics,  (1970), pp.~1--84.

\bibitem{Hesthaven2008}
{\sc J.~S. Hesthaven and T.~Warburton}, {\em {Nodal Discontinuous Galerkin
  Methods}}, vol.~54 of Texts in Applied Mathematics, Springer New York, New
  York, NY, 2008.

\bibitem{jorgens1982linear}
{\sc K.~J\"{o}rgens}, {\em {Linear integral operators}}, Surveys and reference
  works in mathematics, Pitman Advanced Pub. Program, 1982.

\bibitem{Khoromskij2011}
{\sc B.~N. Khoromskij}, {\em {O (dlog N)-Quantics Approximation of Nd Tensors
  in High-Dimensional Numerical Modeling}}, Constructive Approximation,
  (2011), pp.~257--280.

\bibitem{Khoromskij2010}
{\sc B.~N. Khoromskij and I.~V. Oseledets}, {\em {Quantics-TT Collocation
  Approximation of Parameter-Dependent and Stochastic Elliptic PDEs}},
  Computational Methods in Applied Mathematics, 10 (2010), pp.~376--394.

\bibitem{Khoromskij2011a}
{\sc B.~N. Khoromskij and C.~Schwab}, {\em {Tensor-structured Galerkin
  approximation of parametric and stochastic elliptic PDEs}}, SIAM Journal on
  Scientific Computing, 33 (2011), pp.~364--385.

\bibitem{Kolda2009}
{\sc T.~G. Kolda and B.~W. Bader}, {\em {Tensor Decompositions and
  Applications}}, SIAM Review, 51 (2009), pp.~455--500.

\bibitem{Kopriva2009}
{\sc D.~A. Kopriva}, {\em {Implementing Spectral Methods for Partial
  Differential Equations}}, Scientific Computation, Springer Netherlands,
  Dordrecht, 2009.

\bibitem{Kreyszig2007}
{\sc E.~Kreyszig}, {\em {Introductory functional analysis with applications}},
  Wiley classics library, John Wiley \& Sons, 2007.

\bibitem{Kruskal1989}
{\sc J.~B. Kruskal, R.~A. Harshman, and M.~E. Lundy}, {\em {How 3-MFA data can
  cause degenerate PARAFAC solutions, among other relationships}}, Multiway
  data analysis,  (1989).

\bibitem{Litvinenko2013}
{\sc A.~Litvinenko, H.~G. Matthies, and T.~A. El-Moselhy}, {\em {Sampling and
  Low-Rank Tensor Approximation of the Response Surface}}, in {Monte Carlo and
  Quasi-Monte Carlo Methods 2012}, Josef Dick, Frances~Y. Kuo, Gareth~W.
  Peters, and Ian~H. Sloan, eds., vol.~65 of Springer Proceedings in
  Mathematics \& Statistics, Springer Berlin Heidelberg, Berlin, Heidelberg,
  2013, pp.~535--551.

\bibitem{1978Loeve}
{\sc M.~Lo\`{e}ve}, {\em Probability Theory}, vol.~I-II, Springer-Verlang, New
  York, 4~ed., 1978.

\bibitem{Martin2008}
{\sc C.~D.~M. Martin and C.~F. {Van Loan}}, {\em {A Jacobi-Type Method for
  Computing Orthogonal Tensor Decompositions}}, SIAM Journal on Matrix Analysis
  and Applications, 30 (2008), pp.~1219--1232.

\bibitem{Matthies2012}
{\sc H.~G. Matthies and E.~Zander}, {\em {Solving stochastic systems with
  low-rank tensor compression}}, Linear Algebra and its Applications, 436
  (2012), pp.~3819--3838.

\bibitem{Nobile2008}
{\sc F.~Nobile, R.~Tempone, and C.~G. Webster}, {\em {An Anisotropic Sparse
  Grid Stochastic Collocation Method for Partial Differential Equations with
  Random Input Data}}, SIAM Journal on Numerical Analysis, 46 (2008),
  pp.~2411--2442.

\bibitem{Nouy2010}
{\sc A.~Nouy}, {\em {Proper Generalized Decompositions and Separated
  Representations for the Numerical Solution of High Dimensional Stochastic
  Problems}}, Archives of Computational Methods in Engineering, 17 (2010),
  pp.~403--434.

\bibitem{Nouy2009}
{\sc A.~Nouy and O.~P. {Le Ma\^itre}}, {\em {Generalized spectral decomposition
  for stochastic nonlinear problems}}, Journal of Computational Physics, 228
  (2009), pp.~202--235.

\bibitem{Oseledets2010a}
{\sc I.~V. Oseledets}, {\em Approximation of {$2^d \times 2^d$} matrices using
  tensor decomposition}, SIAM Journal on Matrix Analysis and Applications, 31
  (2010), pp.~2130--2145.

\bibitem{Oseledets2011}
\leavevmode\vrule height 2pt depth -1.6pt width 23pt, {\em {Tensor-train
  decomposition}}, SIAM Journal on Scientific Computing, 33 (2011),
  pp.~2295--2317.

\bibitem{Oseledets2012}
\leavevmode\vrule height 2pt depth -1.6pt width 23pt, {\em {Constructive
  Representation of Functions in Low-Rank Tensor Formats}}, Constructive
  Approximation, 37 (2012), pp.~1--18.

\bibitem{Oseledets2010}
{\sc I.~V. Oseledets and E.~Tyrtyshnikov}, {\em {TT-cross approximation for
  multidimensional arrays}}, Linear Algebra and its Applications, 432 (2010),
  pp.~70--88.

\bibitem{Patterson1968}
{\sc T.~N.~L. Patterson}, {\em {The Optimum Addition of Points to Quadrature
  Formulae}}, Mathematics of Computation, 22 (1968), pp.~847--s31.

\bibitem{Savostyanov2011}
{\sc D.~Savostyanov and I.~V. Oseledets}, {\em {Fast adaptive interpolation of
  multi-dimensional arrays in tensor train format}}, The 2011 International
  Workshop on Multidimensional (nD) Systems,  (2011), pp.~1--8.

\bibitem{Schmidt1907}
{\sc E.~Schmidt}, {\em {Zur Theorie der linearen und nicht linearen
  Integralgleichungen I Teil}}, Mathematische Annalen, 63 (1907), pp.~433--476.

\bibitem{Schneider2014}
{\sc R.~Schneider and A.~Uschmajew}, {\em {Approximation rates for the
  hierarchical tensor format in periodic Sobolev spaces}}, Journal of
  Complexity, 30 (2014), pp.~56--71.

\bibitem{Schwab2006}
{\sc C.~Schwab and R.~A. Todor}, {\em {Karhunen-Lo\`{e}ve approximation of
  random fields by generalized fast multipole methods}}, Journal of
  Computational Physics, 217 (2006), pp.~100--122.

\bibitem{Sidiropoulos2000}
{\sc N.~D. Sidiropoulos and R.~Bro}, {\em {On the uniqueness of multilinear
  decomposition of N-way arrays}}, Journal of chemometrics,  (2000),
  pp.~229--239.

\bibitem{Smithies1937}
{\sc F.~Smithies}, {\em {The eigen-values and singular values of integral
  equations}}, Proceedings of the London Mathematical Society,  (1937).

\bibitem{Smolyak1963}
{\sc S.~A. Smolyak}, {\em {Quadrature and interpolation formulas for tensor
  products of certain classes of functions}}, Dokl. Akad. Nauk SSSR,  (1963).

\bibitem{Tamellini2014}
{\sc L.~Tamellini, O.~{Le Ma\^{\i}tre}, and A.~Nouy}, {\em {Model Reduction
  Based on Proper Generalized Decomposition for the Stochastic Steady
  Incompressible Navier--Stokes Equations}}, SIAM Journal on Scientific
  Computing, 36 (2014), pp.~A1089--A1117.

\bibitem{TOWNSEND2013}
{\sc A.~Townsend and L.~N. Trefethen}, {\em {Continuous analogues of matrix
  factorizations}}, Proceedings of the Royal Society of London A, 471 (2015),
  p.~20140585.

\bibitem{Trefethen1997}
{\sc L.~N. Trefethen and D.~{Bau III}}, {\em {Numerical linear algebra}},
  vol.~12, Society for Industrial and Applied Mathematics, 1997.

\bibitem{Tuck1963a}
{\sc L.~R. Tucker}, {\em {Implications of factor analysis of three-way matrices
  for measurement of change}}, in Problems in measuring change, C~W Harris,
  ed., University of Wisconsin Press, Madison WI, 1963, pp.~122--137.

\bibitem{Tyrtyshnikov2003}
{\sc E.~E. Tyrtyshnikov}, {\em {Tensor approximations of matrices generated by
  asymptotically smooth functions}}, Sbornik: Mathematics, 194 (2003),
  pp.~941--954.

\bibitem{Simsa1992}
{\sc J.~\v{S}im\v{s}a}, {\em The best {$L^2$}-approximation by finite sums of
  functions with separable variables}, aequationes mathematicae, 43 (1992),
  pp.~248--263.

\bibitem{White1993}
{\sc S.~R. White}, {\em {Density-matrix algorithms for quantum renormalization
  groups}}, Physical Review B, 48 (1993), pp.~345--356.

\bibitem{Xiu2005}
{\sc D.~Xiu and J.~S. Hesthaven}, {\em {High-Order Collocation Methods for
  Differential Equations with Random Inputs}}, SIAM Journal on Scientific
  Computing, 27 (2005), pp.~1118--1139.

\bibitem{Zhang2014}
{\sc Z.~Zhang, X.~Yang, I.~V. Oseledets, G.~Karniadakis, and L.~Daniel}, {\em
  {Enabling High-Dimensional Hierarchical Uncertainty Quantification by ANOVA
  and Tensor-Train Decomposition}}, arXiv preprint arXiv:1407.3023,  (2014),
  p.~13.

\end{thebibliography}

\end{document}